\newcolumntype{C}[1]{>{\centering\hspace{0pt}}p{#1}}
\newcommand{\Spin}{\mathrm{Spin}}
\newcommand{\SU}{\mathrm{SU}}
\newcommand{\Sp}{\mathrm{Sp}}
\newcommand{\G}{\mathrm{G}}
\newcommand{\Spinor}{\slashed{S}}
\newcommand{\Z}{\mathbb{Z}}
\newcommand{\R}{\mathbb{R}}
\newcommand{\C}{\mathbb{C}}
\newcommand{\CP}{\mathbb{CP}}
\newcommand{\vol}{\mathrm{vol}}
\newtheorem{thm}{Theorem}[section]
\newtheorem{prop}[thm]{Proposition}
\newtheorem{lem}[thm]{Lemma}
\newtheorem{cor}[thm]{Corollary}
\theoremstyle{definition}
\newtheorem{defn}[thm]{Definition}
\newtheorem{example}[thm]{Example}
\newtheorem{rmk}[thm]{Remark}
\newtheorem{convention}[thm]{Convention}
\numberwithin{equation}{section}
\title{Hyperbolicity and Schwarz Lemmas \\
in Calibrated Geometry}
\author{Kyle Broder, Anton Iliashenko, Jesse Madnick}
\date{December 2025}
\newcommand{\Addresses}
{{  \bigskip
\noindent	\textsc{The University of Queensland} \par\nopagebreak
\noindent	\textsc{Brisbane, Australia} \par\nopagebreak
\noindent	\texttt{k.broder@uq.edu.au} \\

\medskip
\noindent	\textsc{Beijing Institute of Mathematical Sciences and Applications} \par\nopagebreak
\noindent	\textsc{Beijing, China} \par\nopagebreak
\noindent	\texttt{antoniliashenko@bimsa.cn} \\

\medskip
\noindent	\textsc{Seton Hall University} \par\nopagebreak
\noindent	\textsc{South Orange, NJ, United States} \par\nopagebreak
\noindent	\texttt{jesse.ochs.madnick@gmail.com} \\
}}
\begin{document}

\maketitle

	\begin{abstract} This paper has two main objectives.  First, for an arbitrary calibrated manifold $(X,\phi)$, we define notions of $R_\phi$-hyperbolicity and $\phi$-hyperbolicity, which respectively generalize the notions of Kobayashi and Brody hyperbolicity from complex geometry.  To make sense of the former, we introduce the ``KR $\phi$-metric," a decreasing Finsler pseudo-metric that specializes to the Kobayashi-Royden pseudo-metric in the K\"{a}hler case.  We prove that $R_\phi$-hyperbolicity implies $\phi$-hyperbolicity, and give examples showing that the converse fails in general.  Moreover, for constant-coefficient, inner M\"{o}bius rigid calibrations $\phi$ in $\R^n$, we completely characterize those domains that are $\phi$-hyperbolic.

Second, we derive a Schwarz lemma for Smith immersions (a.k.a. conformal $\phi$-curves) into an arbitrary calibrated manifold $(X, \phi)$, thereby extending the Schwarz lemma for holomorphic curves into K\"{a}hler manifolds.  The relevant Bochner formula features the ``$\phi$-sectional curvature," a new notion that includes both the scalar and holomorphic sectional curvatures as special cases.  As an application, we prove that calibrated geometries with $\phi$-sectional curvature bounded above by a negative constant are $R_\phi$-hyperbolic, generalizing the corresponding result from complex geometry.  As another application, we calculate the KR $\phi$-metric of real, complex, and quaternionic hyperbolic spaces equipped with their natural calibrations.
\end{abstract}

  \tableofcontents

\pagebreak

\section{Introduction}

\indent \indent For a K\"{a}hler manifold $(X, g_X, \omega)$, hyperbolicity refers to the interplay of three \emph{a priori} unrelated concepts: the scarcity of holomorphic maps $\C \to X$, the non-degeneracy of invariant pseudo-distances, and strongly negative curvature.  More precisely, we say that:
\begin{enumerate}[(1)]
\item $X$ is \emph{Brody hyperbolic} if every holomorphic map $\C \to X$ is constant.
\item $X$ is \emph{Kobayashi hyperbolic} if the Kobayashi pseudo-distance is non-degenerate (see (\ref{eq:KobDist})).
\item $X$ has \emph{strongly negative holomorphic sectional curvature} if its holomorphic sectional curvature is bounded above by a negative constant.
\end{enumerate}
The implications (3) $\implies$ (2) $\implies$ (1) are well-known.  When $X$ is compact, Brody \cite{brody1978compact} proved that (1) and (2) are equivalent. 

The primary aim of this paper is to extend all three notions to arbitrary calibrated manifolds $(X, g_X, \phi)$, and to generalize the implications (3) $\implies$ (2) $\implies$ (1).  By a ``calibrated manifold,” we mean an oriented Riemannian $n$-manifold $(X, g_X)$ equipped with a comass one, closed $k$-form $\phi \in \Omega^k(X)$ called a calibration.

Extending (1) and (2) requires a suitable class of maps between calibrated manifolds.  Such a class is provided by Smith immersions.  A \emph{Smith immersion} (or \emph{conformal $\phi$-curve}) is a smooth map $f \colon (\Sigma^k, g_\Sigma, \vol_\Sigma) \to (X^n, g_X, \phi)$ satisfying
\begin{align*}
f^*g_X & = \lambda^2 g_\Sigma \\
f^*\phi & = \lambda^k \vol_\Sigma
\end{align*}
for some function $\lambda \colon \Sigma \to [0,\infty)$ called the \emph{conformal factor}.  The first equation shows that Smith immersions are weakly conformal maps.  Consequently, if $f$ is a Smith immersion, then at every $x \in \Sigma$, we have either $\mathrm{rank}(df_x) = k$ or $\mathrm{rank}(df_x) = 0$.  (Thus, in spite of the terminology, Smith immersions need not be immersions.  A Smith immersion is called \emph{strict} if it is an immersion.)  When $(X, g_X, \omega)$ is a K\"{a}hler manifold, the Smith immersions $f \colon (\Sigma^2, g_\Sigma, \vol_\Sigma) \to (X, g_X, \omega)$ are precisely the holomorphic curves into $X$.

Smith immersions can be viewed as the ``conformal mapping analog” of calibrated submanifolds.  Indeed, the image of a strict Smith immersion is an immersed $\phi$-calibrated submanifold of $X$.  Conversely, every immersed $\phi$-calibrated submanifold can be parametrized by a Smith immersion.  More interestingly, just as calibrated submanifolds are homologically volume minimizing, it turns out that Smith immersions are homologically $k$-energy minimizing.  We review the basics of Smith immersions in $\S$\ref{sec:ConformallyCalibrating}.

We say that $(X, g_X, \phi)$ is \emph{$\phi$-hyperbolic} if every Smith immersion $f \colon (\R^k, g_0, \vol_0) \to (X, g_X, \phi)$ is constant, where $g_0$ is the flat metric.  When $\phi = \omega$ is a K\"{a}hler form, $\omega$-hyperbolicity is precisely Brody hyperbolicity.  When $\phi = \vol_X$ is a volume form, $\vol_X$-hyperbolicity is related to the failure of 1-quasiregular ellipticity \cite{bonk2001quasiregular}.  For generic constant-coefficient calibrations on $\R^n$, we classify the $\phi$-hyperbolic domains:

\begin{thm} \label{thm:Main1} Let $\phi \in \Lambda^k(\R^n)^*$ be a constant-coefficient, inner M\"{o}bius rigid calibration, $k \geq 3$.  Let $U \subset \R^n$ be a domain, and equip $U$ with the flat metric $g_0$.  Then $(U, g_0)$ is $\phi$-hyperbolic if and only if $U$ contains no affine $\phi$-planes. 
\end{thm}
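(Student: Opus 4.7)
The forward implication is immediate: given an affine $\phi$-plane $P \subset U$, any orientation-preserving isometry $\iota \colon (\R^k, g_0, \vol_0) \to P$ is a non-constant Smith immersion into $(U, g_0, \phi)$ with conformal factor $\lambda \equiv 1$, so $(U, g_0)$ fails to be $\phi$-hyperbolic.

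For the converse, I argue the contrapositive. Suppose $(U, g_0)$ is not $\phi$-hyperbolic and let $f \colon \R^k \to U \subset \R^n$ be a non-constant Smith immersion. The inner M\"{o}bius rigidity hypothesis on $\phi$ is designed precisely to force such an $f$ to factor as $f = \iota_P \circ M$, where $P \subset \R^n$ is an affine $\phi$-plane, $\iota_P \colon P \hookrightarrow \R^n$ is its inclusion, and $M \colon \R^k \to P$ is a non-constant conformal map (after identifying $P \cong \R^k$).

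The hypothesis $k \geq 3$ then activates Liouville's theorem: $M$ is the restriction of a M\"{o}bius transformation of $\Sph^k = \R^k \cup \{\infty\}$. Now comes the key structural observation. Since $M$ is defined on all of $\R^k$ and takes values in $P \cong \R^k$, it neither has a pole in $\R^k$ nor attains the value $\infty$; hence its M\"{o}bius extension must fix $\infty$, forcing $M$ to be an affine similarity of $\R^k$. In particular $M$ is surjective, so
\[
f(\R^k) \;=\; \iota_P(M(\R^k)) \;=\; P \;\subset\; U,
\]
exhibiting an affine $\phi$-plane contained in $U$.

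The crux of the argument is the correct deployment of the inner M\"{o}bius rigidity hypothesis to extract the factorization $f = \iota_P \circ M$ from a general non-constant Smith immersion; this is the step that meaningfully uses the structure of $\phi$ and rules out exotic $\phi$-calibrated subvarieties of $\R^n$ that are not planar. Once that factorization is in hand, the remainder of the argument is a clean consequence of the classical fact that a M\"{o}bius self-map of $\R^k$ extending to all of $\R^k$ without poles is necessarily affine, together with the obvious (easy) direction.
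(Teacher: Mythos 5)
Your proof is correct and follows essentially the same route as the paper: the forward direction exhibits an affine $\phi$-plane as the image of a non-constant Smith immersion from flat $\R^k$, and the converse applies inner M\"{o}bius rigidity to obtain the factorization $f(x) = y_0 + L(v(x))$ and then observes that the entire M\"{o}bius restriction $v$ must be surjective onto $\R^k$, so $f(\R^k)$ is a full affine $\phi$-plane contained in $U$. Your additional observation that the M\"{o}bius extension must fix $\infty$ (hence $M$ is an affine similarity) is a slight strengthening the paper does not need, and your separate appeal to Liouville's theorem is redundant since the definition of ``inner M\"{o}bius rigid'' already delivers $v$ as a M\"{o}bius restriction directly; neither point affects correctness.
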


A result of \cite{ikonen2024liouville} implies that the generic constant-coefficient calibration on $\R^n$ is inner M\"{o}bius rigid.  Moreover, the normalized powers of the standard K\"{a}hler form $\frac{1}{p!}\omega^p \in \Lambda^{2p}(\R^{2n})^*$ for $p \geq 2$, as well as every codimension 2 calibration, is inner M\"{o}bius rigid \cite{ikonen2024liouville}.

% \pagebreak

To extend (3), we need to generalize the notion of holomorphic sectional curvature.  To this end, when $(X, g_X, \phi)$ is a calibrated manifold, we say that the \emph{$\phi$-sectional curvature} of a $\phi$-plane $\xi \subset TX$ is the sum of sectional curvatures of $2$-planes in $\xi$:
$$\mathrm{Sec}^\phi(\xi) := \sum_{1 \leq i,j \leq k} \mathrm{Sec}(e_i \wedge e_j),$$
where $\{e_1, \ldots, e_k\}$ is a $g_X$-orthonormal basis of $\xi$.  When $\phi = \omega$ or $\phi = \vol_X$, the $\phi$-sectional curvature is simply the holomorphic sectional or scalar curvature, respectively.  With this notion in hand, we generalize the implication (3) $\implies$ (1):

\begin{thm} \label{thm:Main2} Let $(X, g_X, \phi)$ be a calibrated manifold.  If $\mathrm{Sec}^\phi \leq -A$ for some $A > 0$, then $(X, g_X)$ is $\phi$-hyperbolic.
\end{thm}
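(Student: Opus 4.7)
The plan is to adapt the classical Ahlfors--Schwarz--Brody argument from complex geometry to the setting of Smith immersions, relying on the Schwarz lemma for Smith immersions announced in the abstract. Let $f \colon (\R^k, g_0, \vol_0) \to (X, g_X, \phi)$ be a Smith immersion with conformal factor $\lambda \colon \R^k \to [0,\infty)$, so $f^*g_X = \lambda^2 g_0$; the goal is to show $\lambda \equiv 0$.

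First, for each $r > 0$, I would equip the Euclidean ball $\B_r \subset \R^k$ with a complete rotationally symmetric conformal metric $h_r = \mu_r^2 g_0$ whose sectional curvatures are uniformly bounded below (e.g., $\geq -1$) and which degenerates on compact sets in the sense that, for each fixed $x \in \R^k$, $\mu_r(x) \to 0$ as $r \to \infty$. A standard choice is (a rescaling of) the Poincar\'e-type metric on $\B_r$. Then $f|_{\B_r} \colon (\B_r, h_r) \to (X, g_X, \phi)$ is still a Smith immersion, now with conformal factor $\lambda / \mu_r$.

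Next, I would apply the Bochner formula for Smith immersions whose curvature term is $\mathrm{Sec}^\phi$. Together with the hypothesis $\mathrm{Sec}^\phi \leq -A$, this produces, on the open set $\{\lambda > 0\}$, a differential inequality of schematic form
$$\Delta_{h_r}\, \log(\lambda/\mu_r) \;\geq\; c_k\, A\, (\lambda/\mu_r)^2 \;-\; C_k,$$
where $c_k > 0$ and $C_k \geq 0$ depend only on $k$ and the lower curvature bound of $h_r$. The function $\lambda/\mu_r$ tends to $0$ at $\partial \B_r$ (since $\mu_r \to \infty$ there while $\lambda$ is bounded on compact subsets of $\R^k$), so a maximum-principle argument applied at an interior maximum of $\lambda/\mu_r$ yields a uniform estimate $\lambda \leq K \mu_r$ on $\B_r$ for a constant $K = K(k,A)$ independent of $r$. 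Fixing $x \in \R^k$ and letting $r \to \infty$ gives $\lambda(x) = 0$, and since $x$ was arbitrary, $f$ is constant.

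The main obstacle is the Schwarz-type comparison step in dimensions $k \geq 3$. In the K\"ahler case $k = 2$ the relevant curvature invariant is a single Gaussian curvature and the Ahlfors lemma applies directly, but here $\mathrm{Sec}^\phi$ is a sum of $\binom{k}{2}$ sectional curvatures, so the Bochner identity for $\log \lambda$ must be the one tailored to this aggregate quantity --- precisely what the paper's Schwarz lemma for Smith immersions provides. The complementary construction of an exhausting family of barrier metrics $h_r$ on $\B_r$ with the required uniform curvature lower bound and degeneration behavior is, by contrast, a routine Riemannian exercise, so once the Bochner inequality is in hand, the Brody-type limiting argument proceeds with essentially no additional complex-geometric input.
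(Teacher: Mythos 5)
Your argument is valid but takes a genuinely different route from the paper's. The paper proves its Schwarz lemma (Theorem~\ref{thm:SchwarzMain}) on arbitrary \emph{complete} domains with Ricci curvature bounded below, by combining the Bochner formula (Proposition~\ref{prop:Bochner}) with Yau's maximum principle (via Proposition~\ref{prop:MaximumArgument}); applying part (b) directly to the flat, complete domain $(\R^k, g_0)$, which has $\mathrm{Scal}_{\R^k} = 0$, gives $|df|^2 \leq kS/A$ for every $S > 0$ and hence $df \equiv 0$ --- no exhaustion or limiting argument is needed. You instead rebuild the Ahlfors--Brody exhaustion: restrict to Euclidean balls, install Poincar\'e-type barrier metrics $h_r = \mu_r^2 g_0$ of fixed constant negative curvature, apply the Bochner formula together with an \emph{interior} maximum principle (available because $\lambda/\mu_r$ extends continuously by zero to $\partial B^k(r)$), and send $r \to \infty$ using $\mu_r(x) \to 0$ pointwise. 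Both routes share the Bochner formula as the analytic engine; yours is more elementary in that it avoids Yau's maximum principle, at the cost of the barrier construction and the limit, while the paper's complete non-compact Schwarz lemma already absorbs the limiting work. One caution: your schematic inequality $\Delta_{h_r}\log(\lambda/\mu_r) \geq c_k A(\lambda/\mu_r)^2 - C_k$ does not hold pointwise on $\{\lambda > 0\}$ --- converting Proposition~\ref{prop:Bochner} to a $\log$-Bochner identity produces a \emph{negative} coefficient on $\bigl|\nabla|df|^2\bigr|^2$, so the inequality is only guaranteed at critical points of $\lambda/\mu_r$. It is cleaner to run the maximum principle directly on $|df_{h_r}|^{2k}$: at an interior maximum $\nabla|df_{h_r}|^2 = 0$, the offending gradient term in Proposition~\ref{prop:Bochner} vanishes, and the curvature comparison gives $(\lambda/\mu_r)^2 \leq S_r/A$ with $S_r := -\mathrm{Scal}_{h_r}$ held fixed in $r$, after which your Brody limit concludes as described.
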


To prove Theorem \ref{thm:Main2}, we will establish a Schwarz lemma for Smith immersions.  Schwarz lemmas have long been a powerful tool in complex geometry.  Recall that the classic statement asserts that every holomorphic function $f \colon \mathbb{D} \to \mathbb{D}$ on the unit disk $\mathbb{D} = \{z \in \C \colon |z| < 1\}$ with $f(0) = 0$ satisfies $|f(z)| \leq |z|$.  Pick reinterpreted this as saying that every holomorphic map $f \colon \mathbb{D} \to \mathbb{D}$ is distance non-increasing (with respect to the Poincar\'{e} distance), and Ahlfors generalized this distance non-increasing property to holomorphic maps $f \colon \mathbb{D} \to \Sigma$, where $\Sigma$ is a Riemann surface with Gauss curvature $K_\Sigma \leq -4$.  Ahlfors' Schwarz Lemma is a special case of the following result:

\begin{thm}[Schwarz Lemma] \label{thm:Main3} Let $f \colon (\Sigma^k, g_\Sigma, \vol_\Sigma) \to (X^n, g_X, \phi)$ be a smooth map, where $\phi \in \Omega^k(X)$ is a calibration.  Suppose that $\Sigma$ is complete and has Ricci curvature bounded below.
\begin{enumerate}[(a)]
\item Suppose $f$ is weakly conformal and $k$-harmonic.  If $\mathrm{Scal}_\Sigma \geq -S$ for $S \geq 0$, and if $\mathrm{Sec}_X \leq -B$ for $B > 0$, then $|df|^2 \leq \frac{S}{(k-1)B}$.  Consequently, $\mathrm{dist}_X(f(p), f(q)) \leq \sqrt{ \frac{S}{k(k-1)B} }\,\mathrm{dist}_\Sigma(p,q)$ for all $p,q \in \Sigma$.
\item Suppose $f$ is a Smith immersion.  If $\mathrm{Scal}_\Sigma \geq -S$ for $S \geq 0$, and if $\mathrm{Sec}^\phi_X \leq -A$ for $A > 0$, then $|df|^2 \leq \frac{kS}{A}$.   Consequently, $\mathrm{dist}_X(f(p), f(q)) \leq \sqrt{ \frac{S}{A}}\,\mathrm{dist}_\Sigma(p,q)$ for all $p,q \in \Sigma$.
\end{enumerate}
\end{thm}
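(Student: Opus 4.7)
The plan is the classical Ahlfors--Schwarz scheme: derive a Bochner-type inequality for $u := |df|^2$ and then apply the Omori--Yau maximum principle, which is available because $\Sigma$ is complete with Ricci curvature bounded below. At any point where $df \ne 0$, weak conformality $f^* g_X = \lambda^2 g_\Sigma$ lets me choose a $g_\Sigma$-orthonormal frame $\{e_i\}$ for which $\tilde e_i := df(e_i)/\lambda$ is a $g_X$-orthonormal basis of the image $k$-plane $\xi := df(T_p \Sigma) \subset T_{f(p)} X$. An Eells--Sampson-type computation, adapted to $k$-harmonic maps in part (a) and to Smith immersions in part (b), then yields an inequality of the form
\begin{equation*}
\tfrac{1}{2} \Delta u \;\geq\; |\nabla df|^2 \,+\, \lambda^2\,\mathrm{Scal}_\Sigma \,-\, \lambda^4 \sum_{i,j} R^X(\tilde e_i, \tilde e_j, \tilde e_i, \tilde e_j),
\end{equation*}
with the cross-terms involving the $k$-tension field vanishing by the harmonicity hypothesis.

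In part (a), the sum $\sum_{i,j} R^X(\tilde e_i, \tilde e_j, \tilde e_i, \tilde e_j)$ is twice a sum of $\binom{k}{2}$ sectional curvatures of $2$-planes in $\xi$, hence is at most $-k(k-1) B$ regardless of whether $\xi$ is calibrated. In part (b), because $f$ is a Smith immersion, the plane $\xi$ is automatically $\phi$-calibrated, so the same sum equals $\mathrm{Sec}^\phi(\xi) \leq -A$ directly. Combining with $\mathrm{Scal}_\Sigma \geq -S$ and $u = k\lambda^2$, both cases reduce to an inequality of the schematic shape $\tfrac{1}{2}\Delta u \geq -c_1 S\,u + c_2\,u^2$, with explicit constants chosen so that the thresholds $u \leq S/((k-1)B)$ in (a) and $u \leq kS/A$ in (b) emerge upon setting the right-hand side equal to zero.

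I would then apply Omori--Yau to a bounded modification of $u$, such as $u/(1+u)$, to produce a sequence $\{p_n\} \subset \Sigma$ along which $u(p_n) \to \sup_\Sigma u$ and $\Delta u(p_n)$ is asymptotically nonpositive; passing to the limit in the Bochner inequality yields the claimed pointwise bound on $|df|^2$. The distance estimate is then immediate from weak conformality: $|df(w)|_X = (|df|/\sqrt{k})\,|w|_\Sigma$ for every tangent vector $w$, and integrating $|df|/\sqrt{k}$ along a minimizing $\Sigma$-geodesic produces the Lipschitz inequality. The main obstacle is the Bochner step itself. For part (a) one must handle the $|df|^{k-2}$ weights coming from the $k$-harmonic Euler--Lagrange equation and verify that weak conformality absorbs them cleanly into an Eells--Sampson-type formula; for part (b) one must check that closedness $d\phi = 0$ together with $f^*\phi = \lambda^k\,\vol_\Sigma$ produces precisely the divergence identity needed to annihilate the tension cross-terms. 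Everything downstream of the Bochner inequality is a routine adaptation of the classical argument.
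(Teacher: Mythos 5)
Your overall strategy — a Bochner inequality followed by Omori--Yau — is the same as the paper's, and your curvature bookkeeping in both parts (the factor $k(k-1)B$ in (a), the identification with $\mathrm{Sec}^\phi(\xi)$ for the calibrated plane in (b)) is correct, as is the final passage from the pointwise bound on $|df|^2$ to the Lipschitz distance estimate. But the Bochner step itself, which you correctly flag as the main obstacle, has a genuine gap in the form you outline.

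The issue is your claim that ``the cross-terms involving the $k$-tension field vanish by the harmonicity hypothesis.'' The standard Bochner formula for $|df|^2$ contains the term $\langle \nabla \tau_2(f), df\rangle$, where $\tau_2$ is the \emph{$2$-tension}, not the $k$-tension. For a $k$-harmonic map with $k \neq 2$, $\tau_2(f)$ does \emph{not} vanish: from $\tau_k(f) = |df|^{k-2}\tau_2(f) + \tfrac{k-2}{2}|df|^{k-4}\langle df, \nabla|df|^2\rangle = 0$, one only gets $\tau_2(f) = -\tfrac{k-2}{2}|df|^{-2}\langle df, \nabla|df|^2\rangle$, a first-order expression with no definite sign. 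So you cannot simply drop this term, and a Bochner inequality for $u = |df|^2$ of the schematic form you propose does not follow directly. The paper resolves this not by eliminating the cross-term but by \emph{integrating it into a different quantity}: it derives a Bochner identity for $|df|^{2k}$ rather than $|df|^2$. Using the weak conformality identity $f^\alpha_i f^\alpha_j = \tfrac{1}{k}|df|^2\delta_{ij}$ together with $\mathrm{div}(|df|^{k-2}df) = 0$, one shows $\langle \nabla\tau_2(f), |df|^{k-2}df\rangle = -\tfrac{k-2}{k^2}\Delta|df|^k$, and a chain-rule bookkeeping between $\Delta|df|^2$, $\Delta|df|^k$, and $\Delta|df|^{2k}$ produces the identity
\begin{equation*}
\tfrac{k-1}{k^2}\,\Delta|df|^{2k} \;=\; |df|^{2k-2}|\nabla df|^2 \;+\; \text{(good gradient term)} \;+\; |df|^{2k-2}\,R^{\Sigma,X}(f),
\end{equation*}
with no unresolved cross-term. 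The maximum principle is then applied to $u = |df|^{2k}$, yielding an inequality of the shape $\Delta u \geq -c_1 u + c_2 u^{1+1/k}$, and the bound on $|df|^2$ follows by extracting the $k$-th root. Without this change of the function whose Laplacian is estimated, the $|df|^{k-2}$ weights do not ``absorb cleanly''; they genuinely change which quantity obeys a usable differential inequality. A secondary remark: the closedness $d\phi = 0$ enters only through Smith's theorem, which guarantees that Smith immersions are $k$-harmonic; it plays no role in the Bochner computation itself, contrary to what your last paragraph suggests for part (b) — once $k$-harmonicity and weak conformality are in hand, parts (a) and (b) share the same Bochner identity, and the only difference is how the curvature sum is estimated.
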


Finally, we indicate how to generalize (2).  On a complex manifold $X$, the \emph{Kobayashi-Royden pseudo-metric} is the Finsler pseudo-metric $F_X \colon TX \to [0, \infty]$ defined by
\begin{equation} \label{eq:KRPM}
F_X(v_p) = \inf\left\{ \frac{1}{r} \colon \exists \text{ holomorphic }f \colon \mathbb{D}(r) \to X \text{ s.t. }f(0) = p, f'(0) = v_p \right\}\!,
\end{equation}
where $\mathbb{D}(r) = \{z \in \C \colon |z| < r\}$.  Intuitively, $F_X(v)$ is the reciprocal of the radius of the largest holomorphic disk in $X$ that contains $v$ as a tangent vector.  Notice that a lower bound on $F_X$ is essentially equivalent to a Schwarz lemma for holomorphic maps $\mathbb{D} \to X$.  A remarkable feature of $F_X$ is its ``decreasing property”: for every holomorphic map $f \colon X \to Y$, we have $f^*F_Y \leq F_X$.  This implies in particular that $F_X$ is invariant under biholomorphisms.  Regarding regularity, Royden proved \cite{royden2006remarks} that $F_X$ is upper semicontinuous, but $F_X$ need not be continuous in general \cite[Example 3.5.37]{kobayashi2013hyperbolic}.

\indent Integrating the Kobayashi-Royden pseudo-metric results in the \emph{Kobayashi pseudo-distance}
\begin{equation} \label{eq:KobDist}
d_X^{\mathrm{Kob}}(p,q) = \inf_\gamma \int_0^1 F_X(\gamma'(t))\,dt,
\end{equation}
where the infimum is taken over all piecewise-smooth curves $\gamma \colon [0,1] \to X$ from $p$ to $q$.  Royden proved that $d_X^{\mathrm{Kob}}$ is a genuine distance function (i.e., $p \neq q$ implies $d_X^{\mathrm{Kob}}(p,q) > 0$) if and only if every $p \in X$ has a neighborhood $U \subset X$ and a constant $c > 0$ such that $F_X(v) \geq c |v|$ for all $v \in TU$.  Such complex manifolds $X$ are said to be \emph{Kobayashi hyperbolic}.  It is well-known that Kobayashi hyperbolicity implies Brody hyperbolicity.

In general, for an arbitrary calibrated manifold $(X, g_X, \phi)$, we define its \emph{KR $\phi$-metric} to be the function $\mathcal{K}_{(X,\phi)} \colon TX \to [0,\infty]$ given by
$$\mathcal{K}_{(X, \phi)}(v_p) = \inf\!\left\{ a > 0 \colon \exists f \in \mathrm{SmIm}(B^k, X), w \in T_0B^k, |w| = 1 \text{ s.t. } f(0) = p,\, df_0(w) = \frac{1}{a}v_p \right\}\!,$$
where $\mathrm{SmIm}(B^k, X)$ denotes the collection of Smith immersions $f \colon (B^k, g_1, \vol_1) \to (X, g_X, \phi)$, with $g_1$ denoting the Poincar\'{e} metric and $\vol_1$ its volume form.  When $\phi = \omega$ is a K\"{a}hler calibration, $\mathcal{K}_{(X,\omega)}$ reduces to the Kobayashi-Royden pseudo-metric.  We prove in $\S$\ref{sub:KR} that $\mathcal{K}_{(X,\phi)}$ satisfies a generalization of the decreasing property, and is therefore invariant under Smith equivalences (see $\S$\ref{sec:ConformallyCalibrating} for the definition).  In $\S$\ref{sub:Examples-KR}, we calculate the KR $\phi$-metric of real, complex, and quaternionic hyperbolic spaces for suitable calibrations $\phi$. \\
\indent We shall say that $(X, g_X)$ is \emph{$R_\phi$-hyperbolic} if Royden's criterion holds: every point in $X$ has a neighborhood $U \subset X$ and a constant $c > 0$ such that $\mathcal{K}_{(X,\phi)}(v) \geq c|v|_X$ for all $v \in TU$.  Under the assumption that $\mathcal{K}_{(X,\phi)}$ is upper semicontinuous, we say that $(X,g_X)$ is \emph{$K_\phi$-hyperbolic} if the pseudo-distance $d_{(X,\phi)}(p,q) := \inf_\gamma \int_0^1 \mathcal{K}_{(X,\phi)}(\gamma'(t))\,dt$ is non-degenerate.  The desired generalization of (3) $\implies$ (2) $\implies$ (1) is as follows:

\begin{thm}  \label{thm:Main4} Let $(X, g_X, \phi)$ be a calibrated manifold.  Then
$$\mathrm{Sec}^\phi \leq -A \text{ for some } A > 0 \ \implies \ X \text{ is }R_\phi\text{-hyperbolic} \ \implies \ X \text{ is }\phi\text{-hyperbolic.}$$
Moreover, if $\mathcal{K}_{(X,\phi)}$ is upper semicontinuous, then
$$X\text{ is }R_\phi\text{-hyperbolic} \ \implies \ X \text{ is }K_\phi\text{-hyperbolic} \ \implies X \text{ is }\phi\text{-hyperbolic.}$$
\end{thm}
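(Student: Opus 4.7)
The theorem consists of four implications, and the plan is to deduce them from the Schwarz Lemma (Theorem~\ref{thm:Main3}(b)), a conformal rescaling trick for Smith immersions, the decreasing property of $\mathcal{K}_{(X,\phi)}$ announced in \S\ref{sub:KR}, and the standard Royden-style integration of a Finsler pseudo-metric into a pseudo-distance.

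For $\mathrm{Sec}^\phi \leq -A \Rightarrow R_\phi$-hyperbolic, I would apply Theorem~\ref{thm:Main3}(b) to Smith immersions $f \colon (B^k, g_1, \vol_1) \to (X, g_X, \phi)$ from the Poincar\'e ball of constant sectional curvature $-1$, whose scalar curvature is $S = k(k-1)$; this yields $|df|^2 \leq k^2(k-1)/A$ everywhere. Evaluating at $0$, any $g_1$-unit $w \in T_0 B^k$ satisfies $|df_0(w)|_X \leq C := k\sqrt{(k-1)/A}$; writing $df_0(w) = v/a$ gives $a \geq |v|_X/C$, and hence $\mathcal{K}_{(X,\phi)}(v) \geq |v|_X/C$ holds globally on $TX$, which is strictly stronger than the local Royden criterion.

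For $R_\phi \Rightarrow \phi$, I would argue by contrapositive. Given a nonconstant Smith immersion $f \colon (\R^k, g_0, \vol_0) \to X$, translate the domain so that $df_0 \neq 0$ and, for each $R > 0$, define $f_R \colon B^k \to X$ by $f_R(x) = f(Rx)$. Because the Smith conditions are conformally invariant, a short computation shows $f_R \in \mathrm{SmIm}(B^k, X)$. Since $g_1|_0 = 4\, g_0|_0$, the $g_1$-unit vectors at $0$ have the form $w = \tfrac{1}{2}e$ with $g_0$-unit $e$, and $d(f_R)_0(w) = (R/2)\,df_0(e)$. Taking $v := df_0(e) \neq 0$ and $a := 2/R$ in the definition of $\mathcal{K}_{(X,\phi)}$ gives $\mathcal{K}_{(X,\phi)}(v) \leq 2/R$ for every $R$, so $\mathcal{K}_{(X,\phi)}(v) = 0$ for a nonzero tangent vector at $p := f(0)$, contradicting the Royden criterion at $p$. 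Applied with $(X,\phi) = (\R^k, \vol_0)$, the same rescaling shows $\mathcal{K}_{(\R^k, \vol_0)} \equiv 0$, a fact used below.

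For the remaining implications, assume $\mathcal{K}_{(X,\phi)}$ is upper semicontinuous, so that the integrand in the definition of $d_{(X,\phi)}$ is upper semicontinuous hence measurable along piecewise smooth curves. For $R_\phi \Rightarrow K_\phi$, given $p \neq q$, pick a neighborhood $U$ of $p$ on which $\mathcal{K}_{(X,\phi)}(v) \geq c|v|_X$; any piecewise smooth curve from $p$ to $q$ accumulates $d_{(X,\phi)}$-length at least $c \cdot \min(\mathrm{dist}_X(p, \partial U), \mathrm{dist}_X(p, q)) > 0$. For $K_\phi \Rightarrow \phi$, a nonconstant Smith immersion $f \colon \R^k \to X$ with $p := f(0) \neq f(x_1) =: q$ satisfies, by the decreasing property, $f^*\mathcal{K}_{(X,\phi)} \leq \mathcal{K}_{(\R^k, \vol_0)} \equiv 0$, so integrating along $\gamma(t) := f(tx_1)$ forces $d_{(X,\phi)}(p,q) = 0$, contradicting $K_\phi$-hyperbolicity. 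The main technical subtlety is the conformal rescaling step, whose computation underpins both the Brody-type contradiction and the vanishing of $\mathcal{K}_{(\R^k,\vol_0)}$.
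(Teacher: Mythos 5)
Your proposal is essentially the paper's argument, with one structural difference and a couple of normalization slips.

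The structural difference is in $R_\phi \Rightarrow \phi$-hyperbolic. The paper uses the decreasing property (Proposition~\ref{prop:Dec1}) together with the precomputed fact that $\mathcal{K}_{(\R^k,\vol_0)} \equiv 0$ (Proposition~\ref{ex:FlatElliptic}(b)): if $f \colon (\R^k,g_0,\vol_0) \to X$ is a Smith immersion, then $c\,|df_p(v)|_X \leq \mathcal{K}_{(X,\phi)}(df_p(v)) \leq \mathcal{K}_{(\R^k,\vol_0)}(v) = 0$, forcing $df \equiv 0$. You instead push the rescaling argument directly into $X$: you form the dilated Smith disks $f_R(x) := f(Rx)$ on $B^k$ and let $R \to \infty$ to conclude $\mathcal{K}_{(X,\phi)}(df_0(e)) = 0$. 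The two routes are very close — the paper's Proposition~\ref{ex:FlatElliptic}(b) is itself a dilation argument — but the paper's factorization through the source $(\R^k,\vol_0)$ is cleaner, since it also supplies the degenerate pseudo-distance fact $d_{(\R^k,\vol_0)} \equiv 0$ used verbatim in your $K_\phi \Rightarrow \phi$ step. Your remaining three implications match the paper: the Schwarz lemma for Smith disks gives a uniform lower bound on $\mathcal{K}_{(X,\phi)}$, a geodesic-ball argument gives $R_\phi \Rightarrow K_\phi$, and the decreasing property plus degeneracy of $d_{(\R^k,\vol_0)}$ gives $K_\phi \Rightarrow \phi$.

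Two caveats on the constants. The paper's convention is that $g_1$ on $B^k$ has sectional curvature $-4$ (Proposition~\ref{prop:Poincare-Facts}(b)) and satisfies $g_1|_0 = g_0|_0$, not $4g_0|_0$. So $\mathrm{Scal}_\Sigma = -4k(k-1)$, giving $S = 4k(k-1)$, and the $g_1$-unit vectors at the origin coincide with the $g_0$-unit vectors. Consequently the Schwarz bound should read $|df|^2 \leq \frac{4k^2(k-1)}{A}$, and for a $g_1$-unit $w$ at the origin, $|df_0(w)|_X = \frac{1}{\sqrt{k}}|df_0| \leq 2\sqrt{k(k-1)/A}$ (note the conversion $\Vert df_0\Vert = |df_0|/\sqrt{k}$, which your $C = k\sqrt{(k-1)/A}$ also misses). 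These are normalization errors that change the explicit lower bound $\mathcal{K}_{(X,\phi)}(v) \geq c|v|_X$ only by a constant and do not affect any of the qualitative implications.
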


We also show that the converse of the implication $R_\phi$-hyperbolic $\implies$ $\phi$-hyperbolic is false in general.

\begin{thm} \label{thm:Main5} Let $\phi \in \Lambda^k(\R^n)^*$ be an inner M\"{o}bius rigid, constant-coefficient, elliptic calibration, $k \geq 2$.  Then there exists a domain $U \subset \R^n$ that is $\phi$-hyperbolic but not $R_\phi$-hyperbolic.  Moreover, if $\mathcal{K}_{(U,\phi)}$ is upper semicontinuous, then $U$ is not $K_\phi$-hyperbolic.
\end{thm}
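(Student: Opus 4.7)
We construct $U = \R^n \setminus R$ for a closed set $R$ that meets every affine $\phi$-plane (so that Liouville or Picard forces $\phi$-hyperbolicity) but whose intersection with a one-parameter family of nearby planes escapes to infinity as those planes approach a distinguished limit plane (creating Smith immersions with unbounded derivative at an interior limit point, killing $R_\phi$-hyperbolicity).  In coordinates normalizing $\phi$, parameterize a transverse slice of affine $\phi$-planes $L_y$ by a moduli coordinate $y$ (via the $\phi$-Grassmannian together with a translation), fix a distinguished $y_0$, and pick $h \colon \R^{n-k} \setminus \{y_0\} \to \R^k$ with $|h(y)| \to \infty$ as $y \to y_0$, e.g.\ $h(y) = e^{1/|y - y_0|}\, e_1$.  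Set
\[
R \;=\; \{h(y) \in L_y : y \neq y_0\} \;\cup\; (S \subset L_{y_0}), \qquad U \;=\; \R^n \setminus R,
\]
where $S \subset \R^k$ is a finite set of size $\geq 1$ for $k \geq 3$ and $\geq 2$ for $k = 2$.  Then $R$ is closed (sequences with $y \to y_0$ escape to infinity in $\R^n$) and $U$ is a domain (connected, since $R$ has codimension $\geq 2$ in $\R^n$).

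\textbf{$\phi$-hyperbolicity.}  A Smith immersion $f \colon \R^k \to U$ has image in a single affine $\phi$-plane $L_y$ (since constant-coefficient $\phi$-calibrated submanifolds are pieces of affine planes) and restricts to a weakly conformal map into $L_y \cap U$.  For $k \geq 3$, Liouville's theorem forces $f$ to extend to a similarity of $\R^k$; a non-constant similarity is surjective and thus cannot avoid $h(y)$ or any point of $S$.  For $k = 2$, Picard's theorem rules out a non-constant (anti-)holomorphic map from $\C$ missing two points.  Hence $f$ is constant.

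\textbf{Failure of $R_\phi$.}  Fix $p_0 \in L_{y_0} \cap U$ (disjoint from $S$) and choose $q_n \in L_{y_n} \cap U$ with $y_n \to y_0$ and $q_n \to p_0$.  The removed point $h(y_n) \in L_{y_n}$ lies at Euclidean distance $\sim |h(y_n)| \to \infty$ from $q_n$.  For $k \geq 3$, let $f_n \colon B^k \to L_{y_n} \cap U$ be the M\"obius embedding onto the open half-space of $L_{y_n}$ bounded by the hyperplane through $h(y_n)$ perpendicular to $q_n - h(y_n)$, normalized so $f_n(0) = q_n$; an explicit conformal calculation shows $\|df_n(0)\|_{\mathrm{Euc}} = 2\, \mathrm{height}(q_n) \sim |h(y_n)|$, hence $\mathcal{K}_{(U,\phi)}(v_{q_n}) \leq |v|/|h(y_n)| \to 0$ for $v$ in the $\phi$-direction.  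For $k = 2$, the Poincar\'e metric of the twice-punctured plane $L_{y_n} \cap U$ at $q_n$ decays like $1/|h(y_n)|$, giving the same bound.  Either way, Royden's local lower bound fails at $p_0$, so $U$ is not $R_\phi$-hyperbolic.

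\textbf{Moreover.}  If $\mathcal{K}_{(U,\phi)}$ is upper semicontinuous, then chaining the Smith immersions $f_n$ via their distance-decreasing property (from the KR $\phi$-metric paragraph) produces paths of vanishing KR-length between distinct fixed points of $U$, forcing $d_{(U,\phi)}$ to be degenerate; otherwise the conditional holds vacuously, which is typical in our construction since $\mathcal{K}$ jumps from finite values on $\phi$-directions to $+\infty$ on directions transverse to the $\phi$-Grassmannian.  The main technical obstacle is the explicit derivative estimate in the $R_\phi$ step --- the M\"obius calculation for $k \geq 3$ and the twice-punctured-plane Poincar\'e-metric asymptotics for $k = 2$ --- along with handling the moreover clause uniformly across all constant-coefficient $\phi$ of arbitrary $\phi$-Grassmannian dimension.
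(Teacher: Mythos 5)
Your argument for $\phi$-hyperbolicity rests on the claim that ``constant-coefficient $\phi$-calibrated submanifolds are pieces of affine planes,'' hence that every Smith immersion $f \colon \R^k \to U \subset \R^n$ has image in a single affine $\phi$-plane.  This is false for a general constant-coefficient calibration.  It holds precisely when $\phi$ is \emph{conformally rigid} (equivalently, \emph{inner M\"obius rigid}, by Ikonen--Pankka), and the paper explicitly lists important counterexamples: the K\"ahler form $\omega \in \Lambda^2(\R^{2n})^*$ has holomorphic curves that are not flat; the special Lagrangian, associative $\varphi \in \Lambda^3(\R^7)^*$, and Cayley $\Phi \in \Lambda^4(\R^8)^*$ calibrations all admit non-flat calibrated submanifolds and are therefore \emph{not} inner M\"obius rigid.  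For those calibrations your reduction to Liouville/Picard on a fixed affine slice $L_y \cap U$ never gets off the ground, so your construction does not prove $\phi$-hyperbolicity for all constant-coefficient $\phi$ with $k \geq 2$ as the theorem requires.  (This is also precisely the distinction between Theorem \ref{thm:Main1}, which assumes inner M\"obius rigidity, and the present Theorem \ref{thm:Main5}, which does not.)

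The paper avoids this trap by never invoking rigidity of Smith images.  Instead it takes a single affine $\phi$-plane $E$ through $0$ in coordinates $(x,y) \in \R^k \times \R^{n-k}$, and sets
\[
U = \{ (x,y) : (\text{if } y \neq 0,\ |y|<1 \text{ and } |x||y|<1)\ \text{or}\ (\text{if } y = 0,\ |x|<1) \}.
\]
Failure of $R_\phi$-hyperbolicity is seen directly from \emph{linear} Smith immersions $f_n(z) = n(z,0) + \tfrac{1}{n}u_1$ into $U$, giving $\mathcal{K}_\phi(e_1) \le \tfrac{1}{n}$ near the origin --- no M\"obius half-space estimate or Poincar\'e asymptotics are needed.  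For $\phi$-hyperbolicity, one observes that the component functions $h_j = \pi_j \circ F$ of a Smith immersion $F \colon \R^k \to U$ are bounded (by the definition of $U$) and, by the $k$-Laplacian chain rule (Corollary \ref{kLap-Cor}(a), since each $\pi_j$ is affine), are entire $k$-harmonic functions; the Serrin--Zou Liouville theorem then forces each $h_j$ constant, and a second application to the $f_\ell$'s finishes.  The key lemma you are missing is exactly this $k$-harmonic Liouville argument: it replaces structural rigidity of the image (which fails) with a purely analytic statement about bounded $k$-harmonic functions (which always holds).  Finally, the ``moreover'' clause in the paper is proved constructively from the same linear immersions by chaining through nearby points and applying the explicit Poincar\'e-distance formula, rather than by the soft upper-semicontinuity handwave in your last paragraph, which does not actually establish degeneracy of $d_{(U,\phi)}$.
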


\indent Finally, we remark that analogs of Brody and Kobayashi hyperbolicity have recently been discovered in a purely Riemannian context.  By using conformal harmonic maps in place of holomorphic ones, Forstneri\v{c}-Kalaj \cite{forstnerivc2024schwarz}, Drinovec Drnov\v{s}ek-Forstneri\v{c} \cite{drnovsek2021hyperbolic}, and Forstneri\v{c} \cite{forstnerivc2023domains} have proposed and studied various hyperbolicity notions for domains in euclidean space, and Gaussier-Sukhov \cite{gaussier2025kobayashi}, \cite{gaussier2024kobayashi} have extended them to arbitrary Riemannian manifolds.  These papers were a primary inspiration for this work.  The following table provides a dictionary between various hyperbolicity notions encountered in the literature.
$$\begin{tabular}{| l | l | l |} \hline
Complex geometry & Calibrated geometry & Riemannian geometry \\ \hline \hline
Holomorphic maps & Smith immersions & Conformal harmonic maps \\ \hline
Brody hyperbolic & $\phi$-hyperbolic & Weakly hyperbolic \\ \hline
Kobayashi hyperbolic & $K_\phi$-hyperbolic & Kobayashi hyperbolic \\ \hline
Kobayashi hyperbolic & $R_\phi$-hyperbolic & $R$-hyperbolic \\ \hline
\end{tabular}$$

\subsection{Notation and conventions}

We adopt the following notation and conventions:
\begin{itemize}
\item By ``manifold," we always mean a connected smooth manifold, unless indicated otherwise.
\item We let $B^n(R) = \{x \in \R^n \colon |x| < R\}$ denote the open $n$-ball of radius $R > 0$, and write $B^n := B^n(1)$.  On $B^n(R)$, we let $g_R$ denote the Poincar\'{e} metric, a complete metric of constant curvature $-\frac{4}{R^2}$, which we briefly recall in $\S$\ref{sub:Hyperbolic-Space}.  We let $| \cdot |_R = \sqrt{g_R( \cdot, \cdot )}$ denote the corresponding norm, and let $\vol_R \in \Omega^n(B^n)$ denote the volume form.
\item Let $U \subset \R^n$ be an open set.  On $U$, we let $g_0$ denote the flat metric, let $| \cdot |_0 = \sqrt{g_0(\cdot, \cdot)}$ denote the euclidean norm, and let $\vol_0 = dx^1 \wedge \cdots \wedge dx^n \in \Omega^n(U)$ denote the volume form.
\item Let $(X, g_X)$ be a Riemannian $n$-manifold.  We let $\mathrm{dist}_X  \colon X \times X \to \R$ denote the corresponding distance function.  When $X$ is oriented, we let $\vol_X \in \Omega^n(X)$ denote the volume form.
\item For a linear operator $A \colon V \to W$ between finite-dimensional inner product spaces, we denote its Hilbert-Schmidt and operator norm by, respectively
\begin{align*}
|A| & = |A|_{\mathrm{HS}} = \sqrt{ \sum_{i=1}^n |Ae_i|^2 }, & \Vert A \Vert & = \Vert A \Vert_{\mathrm{op}} = \sup_{|v| = 1} \frac{|Av|}{|v|},
\end{align*}
where $\{e_1, \ldots, e_n\}$ is an orthonormal basis of $V$.
\item Given a Riemannian manifold $(X,g)$, our sign convention for its Riemann curvature tensor is $R(U,V,Z,W) = g(\nabla_U \nabla_V Z - \nabla_V \nabla_U Z - \nabla_{[U,V]} Z , W)$.  In a local orthonormal frame $\{e_1, \ldots, e_n\}$, we write $R_{ijk\ell} := R(e_i, e_j, e_k, e_\ell)$.  The sectional curvature on a unit $2$-vector is denoted $\mathrm{Sec}_X(e_i \wedge e_j) = R_{ijji}$ (no sum).  In the usual way, the Ricci and scalar curvatures of $(X,g)$ in a local frame are denoted $R_{ij} := R_{kijk}$ and $\mathrm{Scal}_X := R_{ii} = R_{kiik}$, where here we use the Einstein summation convention.
\item For a smooth map between Riemannian manifolds $f:(X,g_X) \to (Y,g_Y)$, we denote its \emph{$k$-tension}, a section of $f^*TY$, by
\begin{align*}
\tau_k(f)  := \mathrm{div}(|df|^{k-2} df) =\mathrm{tr}_g (\nabla (|df|^{k-2} df)),
\end{align*}
where $\nabla$ is the induced connection on $T^{*}X \otimes f^{*}TY$.  Note that although $f$ is smooth, $\tau_k(f)$ need not be smooth for $k$ odd.  We say that $f$ is \emph{$k$-harmonic} if $\tau_k(f) = 0$.  The operator $\tau_k$ is (overdetermined) elliptic, and is non-degenerate for $k = 2$, but is degenerate at critical points of $f$ for $k > 2$.  When $Y = \R$, we refer to $\Delta_kf := \tau_k(f)$ as the \emph{$k$-Laplacian}, and say that $f$ is \emph{$k$-subharmonic} if $\Delta_k f \geq 0$.
\end{itemize}

\noindent \textbf{Acknowledgements:} We thank Gavin Ball, Stepan Hudecek, Toni Ikonen, Jason Lotay, Pekka Pankka, and Tommaso Pacini for valuable conversations.  We especially thank Da Rong Cheng and Spiro Karigiannis for their careful readings of an earlier version of this manuscript.  The third author thanks the American Institute of Mathematics.
 
\section{Preliminaries}

\indent \indent We begin by setting foundations.  Section \ref{sec:Calibrated} rapidly recalls the basics of calibrated geometry.  Section \ref{sec:ConformallyCalibrating} concerns \emph{conformally calibrating maps} and \emph{Smith equivalences}, which are our preferred notions of ``morphism" and ``isomorphism" between calibrated manifolds having calibrations of the same degree.  Special attention is paid to \emph{Smith immersions} (also known as \emph{conformal $\phi$-curves}), which provide a conformal mapping approach to the study of calibrated submanifolds.  Indeed, where calibrated submanifolds are minimal (and compact ones are homologically volume minimizing), we will recall that Smith immersions are $k$-harmonic (and those from compact domains are homologically $k$-energy minimizing).  Finally, in $\S$\ref{sub:Hyperbolic-Space}, we quickly review some facts about the Poincar\'{e} ball model of hyperbolic space that we will need in this work.

\subsection{Calibrated manifolds} \label{sec:Calibrated}

\begin{defn}[\cite{harvey1982calibrated}] Let $(X, g_X)$ be an oriented Riemannian $n$-manifold.
\begin{itemize}
\item A \emph{semi-calibration} on $X$ is a $k$-form $\phi \in \Omega^k(X)$ that has co-mass one.
\item A \emph{calibration} on $X$ is a $k$-form $\phi \in \Omega^k(X)$ that has co-mass one and is closed ($d\phi = 0$).
\end{itemize}
A \emph{(semi-)calibrated manifold} $(X, g_X, \phi)$ is an oriented Riemannian manifold $(X, g_X)$ equipped with a (semi-)calibration.
\end{defn}

\begin{defn}[\cite{harvey1982calibrated}] Let $(X, g_X, \phi)$ be a semi-calibrated manifold, where $\phi \in \Omega^k(X)$.
\begin{itemize}
\item A \emph{$\phi$-calibrated plane} (or simply a \emph{$\phi$-plane}) is an oriented $k$-plane $E \subset T_xX$ satisfying $\phi|_E = \vol_E$, where $\vol_E \in \Lambda^k(E^*)$ is the volume form of $E$.  We denote the Grassmannian of $\phi$-planes by
$$\mathrm{Gr}(\phi) := \{E \in \mathrm{Gr}_k(TX) \colon E \text{ is a }\phi\text{-plane}\} \subset \mathrm{Gr}_k^+(TX).$$
\item A \emph{$\phi$-calibrated submanifold} (or simply a \emph{$\phi$-submanifold}) is an oriented $k$-dimensional submanifold $\Sigma \subset X$ for which each of its tangent planes $T_x\Sigma \subset T_xX$ is $\phi$-calibrated.
\end{itemize}
\end{defn}

\begin{thm}[Fundamental theorem of calibrations \cite{harvey1982calibrated}] \label{thm:FundThmCal} Let $(X, g_X, \phi)$ be a calibrated manifold.  If $\Sigma \subset X$ is $\phi$-calibrated, then $\Sigma$ is minimal.  Moreover, if $\Sigma$ is compact and without boundary, then $\Sigma$ is volume-minimizing in its homology class.
\end{thm}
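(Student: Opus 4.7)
The plan is to give a unified Stokes-theorem argument that handles both assertions simultaneously, leveraging two ingredients: the closedness $d\phi = 0$, which renders $\phi$-integrals invariant under cobounding, and the comass-one condition, which provides the pointwise inequality $\phi|_E \leq \vol_E$ for every oriented $k$-plane $E$, with equality precisely on $\phi$-calibrated planes.

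For the global volume-minimizing assertion, suppose $\Sigma$ is compact and boundaryless, and let $\Sigma'$ be any compact, oriented $k$-submanifold of $X$ homologous to $\Sigma$. Choosing a $(k+1)$-chain $W$ with $\partial W = \Sigma - \Sigma'$ and applying Stokes' theorem yields
\[
\int_\Sigma \phi - \int_{\Sigma'} \phi = \int_W d\phi = 0.
\]
Since $\Sigma$ is $\phi$-calibrated, the pullback $\phi|_{T\Sigma}$ coincides with $\vol_\Sigma$, so the left-hand integral equals $\vol(\Sigma)$. Since $\phi$ has comass one, the pointwise bound $\phi|_{T_x\Sigma'} \leq \vol_{T_x\Sigma'}$ shows that the right-hand integral is at most $\vol(\Sigma')$. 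Combining these gives $\vol(\Sigma) \leq \vol(\Sigma')$, which is the stated homological minimization.

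For minimality, which is a purely local condition, I would run exactly the same argument on a compactly supported normal variation. Given such a variation $\{\Sigma_t\}$ of $\Sigma$ whose support lies in a coordinate ball $K$, the pieces $\Sigma \cap K$ and $\Sigma_t \cap K$ share a common boundary in $\partial K$ and cobound a $(k+1)$-chain inside $K$, so the same Stokes-plus-comass computation yields $\vol(\Sigma \cap K) \leq \vol(\Sigma_t \cap K)$. Thus $t = 0$ is a minimum of $t \mapsto \vol(\Sigma_t \cap K)$, so the first variation of area vanishes against every compactly supported normal variation field; this is precisely the statement that $\Sigma$ has vanishing mean curvature.

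There is no substantive obstacle here: the whole result is essentially a one-line consequence of Stokes' theorem together with the comass-one normalization. The only point meriting mild care is the existence and regularity of the cobounding chain $W$, which is immediate from the homology hypothesis once one works in the appropriate (smooth singular or de Rham) chain complex so that the ordinary Stokes formula applies to the smooth form $\phi$.
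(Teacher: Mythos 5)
The paper does not give its own proof of this theorem; it is quoted as a cited result from Harvey--Lawson. Your argument is correct and is exactly the classical proof from that reference: closedness of $\phi$ makes the $\phi$-integral a homology invariant via Stokes applied to a cobounding chain, the comass-one inequality gives $\int_{\Sigma'}\phi \le \vol(\Sigma')$ with equality on calibrated competitors, and minimality follows by localizing the same comparison to a contractible ball (where $H_k = 0$ supplies the cobounding $(k{+}1)$-chain) so that $t=0$ is a local minimum of area and hence the first variation, $-\int_\Sigma\langle H, V\rangle$, vanishes for all compactly supported $V$.
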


\indent For certain applications, it will be convenient to impose the following non-degeneracy assumption.

\begin{defn}[\cite{harvey2009introduction}] A semi-calibration $\phi \in \Omega^k(X)$ is called \emph{elliptic} if for every $x \in X$ and every $v \in T_xX$, there exists a $\phi$-plane $\xi \in \mathrm{Gr}(\phi)|_x$ such that $v \in \xi$.
\end{defn}

As we now recall, all the calibrations arising in connection with special holonomy are elliptic.

\begin{example}[\cite{harvey1982calibrated}] Let $(X, g_X)$ be an oriented Riemannian $n$-manifold.
\begin{enumerate}
\item The volume form $\vol_X \in \Omega^n(X)$ is an elliptic calibration.  This seemingly trivial edge case will play an important role in this work.
\item Suppose $n = 2m$.  If $X$ is K\"{a}hler, then a K\"{a}hler form $\omega \in \Omega^2(X)$ is an elliptic calibration.  More generally, its normalized powers $\frac{1}{p!}\omega^p \in \Omega^{2p}(X)$ are all elliptic calibrations.
\item Suppose $n = 2m$.  If $X$ is Calabi-Yau, then a holomorphic volume form $\Upsilon \in \Omega^{n,0}(X)$ defines a one-parameter family of elliptic calibrations $\mathrm{Re}(e^{-i\theta}\Upsilon) \in \Omega^n(X)$, where $\theta \in [0,2\pi)$ is a constant.  These are known as the \emph{special Lagrangian calibrations}.
\item Suppose $n = 4m$.  If $X$ is quaternionic-K\"{a}hler, then its QK $4$-form $\Psi \in \Omega^4(X)$ is an elliptic calibration.  Locally, we may write $\Psi = \frac{1}{6}(\omega_1^2 + \omega_2^2 + \omega_3^2)$, where $(\omega_1, \omega_2, \omega_3)$ is a triple of (locally defined) non-degenerate $2$-forms associated to an admissible frame $(J_1, J_2, J_3)$.
\item Suppose $n = 7$.  A $\mathrm{G}_2$-structure on $X$ defines elliptic semi-calibrations $\varphi \in \Omega^3(X)$ and $\ast \varphi \in \Omega^4(X)$.  The $\mathrm{G}_2$-structure is called \emph{closed} (resp., \emph{co-closed}) if $\varphi$ (resp., $\ast \varphi$) are calibrations.
\item Suppose $n = 8$.  A $\Spin(7)$-structure on $X$ defines an elliptic semi-calibration $\Phi \in \Omega^4(X)$.  The $\Spin(7)$-structure is called \emph{torsion-free} if $\Phi$ is closed.
\end{enumerate}
\end{example}
\noindent We remark also that hyperk\"{a}hler $4m$-manifolds admit a large supply of calibrations: see \cite{bryant1989submanifolds}.

\subsection{Conformally calibrating maps} \label{sec:ConformallyCalibrating}

\indent \indent We now define a natural notion of ``map" between two calibrated manifolds having calibrations of the same degree.  For this, it will be convenient to first discuss the wider class of weakly conformal maps. \\
\indent A map $f \colon (X^m, g_X) \to (Y^n, g_Y)$ between Riemannian manifolds is \emph{weakly conformal} if there exists a non-negative function $\lambda \colon X \to [0,\infty)$ such that $f^*g_Y = \lambda^2 g_X$.  Note that if $f$ is weakly conformal, then for each $x \in X$, the conformal factor $\lambda(x)$ is given by
\begin{equation} \label{eq:Lambda}
\lambda(x) = \frac{1}{\sqrt{m}}|df_x| = \Vert df_x \Vert.
\end{equation}
We say $f$ is \emph{conformal} if $\lambda > 0$, and an \emph{isometry} if $\lambda = 1$. \\
\indent If $f$ is weakly conformal, then for each point $x \in X$, we have either $\mathrm{rank}(df_x) = m$ (if $\lambda(x) > 0$) or $\mathrm{rank}(df_x) = 0$ (if $\lambda(x) = 0$).  In particular, if $m > n$, then every weakly conformal map is locally constant.  To avoid this trivial case, we shall always assume that $m \leq n$.  Note also that conformal maps are necessarily immersions.

\begin{defn}[\cite{cheng2021bubble}] Let $(X^m, g_X, \alpha)$ and $(Y^n, g_Y, \beta)$ be calibrated manifolds such that $k := \deg(\alpha) = \deg(\beta)$ and $m \leq n$.  A smooth map $f \colon X \to Y$ is \emph{conformally calibrating} if there exists a non-negative function $\lambda \colon X \to [0,\infty)$ such that
\begin{align*}
f^*g_Y & = \lambda^2 g_X \\
f^*\beta & = \lambda^k \alpha.
\end{align*}
Note that constant maps are conformally calibrating.
\end{defn}

\begin{example} Let $f \colon (X^{2m}, g_X, \omega_X) \to (Y^{2n}, g_Y, \omega_Y)$ be a map between two K\"{a}hler manifolds.  One can prove that such an $f \colon X \to Y$ is conformally calibrating if and only if $f$ is weakly conformal and holomorphic.  For $m \geq 2$, this is equivalent to requiring that $f$ be either a constant map, or else a conformal holomorphic immersion with $\lambda = \text{constant}$.
\end{example}

\indent We shall provide more examples of conformally calibrating maps later in this section.  For now, we observe the following basic fact.

\begin{prop} \label{prop:Composition} Let $F \colon (X^m, g_X, \alpha) \to (Y^n, g_Y, \beta)$ and $G \colon (Y^n, g_Y, \beta) \to (Z^p, g_Z, \gamma)$ be smooth maps.  If $F$ and $G$ are conformally calibrating, then so is $G \circ F$.
\end{prop}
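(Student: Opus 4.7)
The plan is to verify the two defining equations of a conformally calibrating map for $G \circ F$ by a direct functoriality argument. Denote the conformal factors of $F$ and $G$ by $\lambda_F \colon X \to [0,\infty)$ and $\lambda_G \colon Y \to [0,\infty)$ respectively, so that $F^*g_Y = \lambda_F^2 g_X$, $F^*\beta = \lambda_F^k \alpha$, $G^*g_Z = \lambda_G^2 g_Y$, and $G^*\gamma = \lambda_G^k \beta$. The candidate conformal factor for the composition is $\lambda := (\lambda_G \circ F)\,\lambda_F \colon X \to [0,\infty)$, and the goal is to check that $(G \circ F)^*g_Z = \lambda^2 g_X$ and $(G \circ F)^*\gamma = \lambda^k \alpha$.

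First, I would apply functoriality of pullback: $(G \circ F)^* g_Z = F^*(G^*g_Z) = F^*(\lambda_G^2\, g_Y)$. Since pullback of tensors commutes with multiplication by scalar functions via pre-composition, $F^*(\lambda_G^2 g_Y) = (\lambda_G \circ F)^2\, F^*g_Y = (\lambda_G \circ F)^2\, \lambda_F^2\, g_X = \lambda^2 g_X$. The identical argument on the $k$-forms gives $(G \circ F)^*\gamma = F^*(\lambda_G^k \beta) = (\lambda_G \circ F)^k\, F^*\beta = (\lambda_G \circ F)^k\, \lambda_F^k\, \alpha = \lambda^k \alpha$, which is the required calibration identity because $\deg(\alpha) = \deg(\beta) = \deg(\gamma) = k$ by hypothesis.

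There is no real obstacle here: the proof is a one-line application of the chain rule for pullbacks, and the fact that the same exponent $k$ appears on both the metric side (as $\lambda^2$ squared to pair two vectors) and the calibration side (as $\lambda^k$ acting on a $k$-form) is precisely what makes the notion of conformally calibrating map compose cleanly. The only thing worth remarking is that one should observe $\lambda \geq 0$ (which is immediate from $\lambda_F, \lambda_G \geq 0$), ensuring $\lambda$ is an honest non-negative conformal factor rather than a signed function.
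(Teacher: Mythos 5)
Your proof is correct and is essentially identical to the paper's: both define the composite conformal factor as the product of $\lambda_F$ with the pullback of $\lambda_G$, and verify the two defining identities by functoriality of pullback. The paper's proof is the same one-line computation with different variable names ($\lambda$, $\mu$, $\nu$ in place of $\lambda_F$, $\lambda_G$, $\lambda$).
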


\begin{proof} Since $F$ is conformally calibrating, there exists $\lambda \colon X \to [0,\infty)$ such that $F^*g_Y = \lambda^2g_X$ and $F^*\beta = \lambda^k \alpha$.  Since $G$ is conformally calibrating, there exists $\mu \colon Y \to [0,\infty)$ such that $G^*g_Z = \mu^2 g_Y$ and $G^*\gamma = \mu^k \beta$.  Setting $\nu := \lambda \cdot (\mu \circ F)$, we have
\begin{align*}
(G \circ F)^*g_Z & = F^*(G^*g_Z) = F^*(\mu^2 g_Y) = F^*(\mu^2)\,\lambda^2 g_X = \nu^2 g_X \\
(G \circ F)^*\gamma & = F^*(G^*\gamma) = F^*(\mu^k \beta) = F^*(\mu^k)\,\lambda^k \alpha = \nu^k \alpha,
\end{align*}
so $G \circ F$ is conformally calibrating.
\end{proof}

\subsubsection{Smith equivalences and automorphisms}

\indent \indent Using conformally calibrating maps, we may define the following notion of ``isomorphism" between calibrated manifolds.

\begin{defn} Let $(X^n, g_X, \alpha)$ and $(Y^n, g_Y, \beta)$ be calibrated $n$-manifolds such that $k = \deg(\alpha) = \deg(\beta)$.  A \emph{(local) Smith equivalence} is a conformally calibrating map $F \colon (X^n, g_X, \alpha) \to (Y^n, g_Y, \beta)$ that is a (resp. local) diffeomorphism.
\end{defn}

\begin{example} ${}$
\begin{enumerate}
\item Let $(\Sigma^k, g_\Sigma, \vol_{\Sigma})$ and $(\widetilde{\Sigma}^k, g_{\widetilde{\Sigma}}, \vol_{\widetilde{\Sigma}})$ be calibrated $k$-manifolds equipped with degree $k$ calibrations (i.e., both calibrations are volume forms).  Then
\begin{align*}
& F\colon (\Sigma, g_\Sigma, \vol_\Sigma) \to (\widetilde{\Sigma}, g_{\widetilde{\Sigma}}, \vol_{\widetilde{\Sigma}}) \text{ is a Smith equivalence} \\
& \ \ \ \ \ \ \ \iff \ F \text{ is an orientation-preserving conformal diffeomorphism.}
\end{align*}
\item Let $(X^{2m}, g_X, \omega_X)$ and $(Y^{2m}, g_Y, \omega_Y)$ be K\"{a}hler $2m$-manifolds.  Then
\begin{align*}
& F\colon (X^{2m}, g_X, \omega_X) \to (Y^{2m}, g_Y, \omega_Y) \text{ is a Smith equivalence} \\
& \ \ \ \ \ \ \ \iff \ F \text{ is a conformal biholomorphism.}
\end{align*}
Note that when $m = 1$, biholomorphisms are automatically conformal.
\end{enumerate}
\end{example}

\begin{example} \label{example:Inclusion} The following two basic examples will be used repeatedly.  Let $(X, g_X)$ be an oriented Riemannian manifold.
\begin{enumerate}
\item The identity map $\mathrm{Id} \colon (X, g_X, \vol_{g_X}) \to (X, \lambda^2 g_X, \vol_{\lambda^2 g_X})$ is a Smith equivalence, where $\lambda \colon X \to [0,\infty)$ is a smooth function. 
\item Let $U \subset X$ be an open set, and equip $U$ with the metric and calibration obtained from $X$ by restriction.  Then the inclusion map $\iota \colon (U, g_X, \phi) \hookrightarrow (X, g_X, \phi)$ is a local Smith equivalence.
\end{enumerate}
\end{example}

Example \ref{example:Inclusion}(b) shows in particular that the inclusion map $\iota \colon (B^n, g_0, \vol_0) \hookrightarrow (\R^n, g_0, \vol_0)$ is a local Smith equivalence.  However, we emphasize that the calibrated manifolds $(B^n, g_0, \vol_0)$ and $(\R^n, g_0, \vol_0)$, despite being diffeomorphic, are \emph{not} globally Smith equivalent.  Before moving on, we explain how discrete quotients can be viewed as local Smith equivalences.

\begin{prop}[Discrete quotients] \label{prop:Discrete} Let $(X, g_X)$ be an oriented Riemannian manifold, and let $\Gamma \leq \mathrm{Isom}(X, g_X)$ be a discrete subgroup that acts freely and properly on $X$.  Let $\pi \colon X \to Y := X/\Gamma$ denote the projection, and equip $Y$ with the unique Riemannian metric $g_Y$ such that $\pi \colon (X, g_X) \to (Y, g_Y)$ is a local isometry.  \\
\indent Let $\phi_X \in \Omega^k(X)$ be a $\Gamma$-invariant calibration on $(X, g_X)$.  Then $\phi_X$ descends to a $k$-form $\phi_Y \in \Omega^k(Y)$ that is a calibration with respect to $g_Y$.  Moreover, $\pi \colon (X, g_X, \phi_X) \to (Y, g_Y, \phi_Y)$ is a local Smith equivalence.
\end{prop}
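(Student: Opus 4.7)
The plan is a clean descent-and-verification argument, using that $\pi\colon X\to Y$ is a covering map by a free, proper, discrete group action, and that $\pi$ is a local isometry by construction of $g_Y$.

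First, I would produce the form $\phi_Y$ by the standard descent of forms along a covering. Because $\Gamma$ acts freely and properly, $\pi$ is a smooth covering, hence in particular a surjective local diffeomorphism. For each $y \in Y$, choose any $x \in \pi^{-1}(y)$ and set $\phi_Y|_y := ((d\pi_x)^*)^{-1}(\phi_X|_x)$; the $\Gamma$-invariance of $\phi_X$ and the transitivity of $\Gamma$ on fibers make this independent of the choice of $x$, and local triviality of $\pi$ makes the resulting $\phi_Y$ smooth. By construction, $\pi^*\phi_Y = \phi_X$. Closedness then passes down: $\pi^*(d\phi_Y) = d(\pi^*\phi_Y) = d\phi_X = 0$, and since $d\pi_x$ is an isomorphism at every $x$, we conclude $d\phi_Y = 0$.

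Next I would verify that $\phi_Y$ has comass one with respect to $g_Y$. Fix $y\in Y$ and an oriented $k$-plane $E \subset T_yY$. Pick $x \in \pi^{-1}(y)$ and let $\widetilde{E} \subset T_xX$ be the oriented $k$-plane with $d\pi_x(\widetilde{E}) = E$ (with the pullback orientation). Because $\pi$ is a local isometry, $d\pi_x\colon \widetilde{E}\to E$ is an orientation-preserving linear isometry, so $(d\pi_x)^*(\vol_E) = \vol_{\widetilde{E}}$ and $(d\pi_x)^*(\phi_Y|_E) = \phi_X|_{\widetilde{E}}$. Writing both in any orthonormal basis of $\widetilde E$ gives
\[
\phi_Y|_E = \phi_X|_{\widetilde E},\qquad \vol_E\ \text{corresponds to}\ \vol_{\widetilde E},
\]
so the comass-one condition on $\phi_X$ transfers directly: $\phi_Y|_E \leq \vol_E$ for every oriented $k$-plane $E$, and the existence of a $\phi_X$-plane $\widetilde E$ at $x$ pushes forward to a $\phi_Y$-plane $E = d\pi_x(\widetilde E)$ at $y$, so the bound is attained. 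Thus $\phi_Y$ is a calibration.

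Finally, $\pi$ is a local Smith equivalence essentially by construction: it is a local diffeomorphism, $\pi^*g_Y = g_X$ by the defining property of $g_Y$, and $\pi^*\phi_Y = \phi_X$ from the descent. Taking the conformal factor $\lambda \equiv 1$, the two defining identities of a conformally calibrating map are satisfied, and a local diffeomorphism that is conformally calibrating is precisely a local Smith equivalence. The only point requiring real care is the comass verification: one must use that $\pi$ is a local isometry (not merely smooth) so that orthonormal $k$-frames, volume forms, and orientations are preserved under $d\pi$; without this, the comass of $\phi_Y$ would be controlled only up to the conformal factor of $\pi$, and the proposition as stated would not hold.
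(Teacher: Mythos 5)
Your proof is correct and follows essentially the same route as the paper: descend $\phi_X$ to $\phi_Y$ via $\Gamma$-invariance of a covering, check closedness by injectivity of $\pi^*$, verify comass by lifting an orthonormal $k$-frame through the local isometry $\pi$, and conclude the local Smith equivalence from $\pi^*g_Y = g_X$, $\pi^*\phi_Y = \phi_X$ with $\lambda \equiv 1$. You are a bit more explicit than the paper in two places — the pointwise construction and smoothness of $\phi_Y$, and the remark that equality in the comass bound is attained — but these are refinements, not a different argument.
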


\begin{proof} Let $\phi_X \in \Omega^k(X)$ be a $\Gamma$-invariant calibration on $(X, g_X)$.  Since $\phi_X$ is $\Gamma$-invariant, there exists a unique $\phi_Y \in \Omega^k(Y)$ such that $\phi_X = \pi^*\phi_Y$. \\
\indent We claim that $\phi_Y$ is a calibration.  First, since $0 = d\phi_X = \pi^*(d\phi_Y)$ and $\pi^*$ is injective, we see that $\phi_Y$ is closed.  Next, let $\{e_1, \ldots, e_k\}$ be a local $g_Y$-orthonormal frame on an open set $U \subset Y$.  Since $\pi$ is a Riemannian covering, by shrinking $U$ if necessary, there exists a local $g_X$-orthonormal frame $\{f_1, \ldots, f_k\}$ on $X$ having $d\pi(f_i) = e_i$.  Therefore, $\phi_Y(e_1 \wedge \cdots \wedge e_k) = \phi_Y(d\pi(e_1) \wedge \cdots \wedge d\pi(e_k)) = \phi_X(f_1 \wedge \cdots \wedge f_k) \leq 1$, and thus $\phi_Y$ has comass one. \\
\indent Finally, since $\pi^*g_Y = g_X$ and $\pi^*\phi_Y = \phi_X$, we see that $\pi$ is a local Smith equivalence.
\end{proof}

\indent Having defined our notion of ``isomorphism" between calibrated manifolds, we now briefly consider the corresponding notion of ``automorphism."

\begin{defn} Let $(X^n, g_X, \alpha)$ be a calibrated manifold.
\begin{itemize}
\item A \emph{Smith automorphism of $X$} is a Smith equivalence $F \colon (X^n, g_X, \alpha) \to (X^n, g_X, \alpha)$.  By definition, that this means that $F$ is a diffeomorphism such that there exists $\lambda \colon X \to \R^+$ for which
\begin{align*}
F^*g_X & = \lambda^2 g_X \\
F^*\alpha & = \lambda^k \alpha.
\end{align*}
We let $\mathrm{SmAut}(X, g_X, \alpha)$ denote the group of Smith automorphisms.
\item  A \emph{Gray automorphism of $X$} is a Smith automorphism $F \colon (X^n, g_X, \alpha) \to (X^n, g_X, \alpha)$ that is an isometry (i.e., it is a Smith automorphism with $\lambda = 1$).  We let $\mathrm{GrAut}(X, g_X, \alpha)$ denote the group of Gray automorphisms.
\end{itemize}
\end{defn}

\indent Note that we have inclusions
$$\begin{tikzcd}
{\mathrm{SmAut}(X,g_X,\phi)} \arrow[r, hook]                 & {\mathrm{ConfAut}(X, g_X)}             \\
{\mathrm{GrAut}(X,g_X,\phi)} \arrow[u, hook] \arrow[r, hook] & {\mathrm{Isom}(X,g_X)} \arrow[u, hook]
\end{tikzcd}$$
Here, $\mathrm{ConfAut}(X,g_X)$ and $\mathrm{Isom}(X, g_X)$ are the groups of conformal automorphisms and isometries of $(X,g_X)$, respectively.

\begin{example} ${}$
\begin{enumerate}
\item Let $(X^{2m}, g_X, \omega_X)$ be a K\"{a}hler manifold.  Then the Smith automorphisms of $X$ are precisely the conformal biholomorphisms, while the Gray automorphisms of $X$ are the isometric biholomorphisms.
\item Consider flat euclidean space $(\R^n, g_0)$ with $n \geq 3$.  Liouville's theorem on conformal maps states that every conformal automorphism of $(\R^n, g_0)$ is a (restriction of a) M\"{o}bius transformation.  In particular, if $\phi \in \Omega^k(\R^n)$ is any calibration, then $\mathrm{SmAut}(\R^n, g_0, \phi)$ is a subgroup of the group of M\"{o}bius transformations.
\end{enumerate}
\end{example}

\subsubsection{Smith immersions}

\indent \indent The most important conformally calibrating maps $f \colon \Sigma \to X$ are those for which the calibration on the domain is a top-degree form.

 \begin{defn} A \emph{Smith immersion} $f \colon (\Sigma^k, g_\Sigma, \vol_\Sigma) \to (X^n, g_X, \phi)$ is a conformally calibrating map for which the calibration on the domain is the volume form $\vol_\Sigma \in \Omega^k(\Sigma)$.  (Note that this requires $\dim(\Sigma) = \deg(\vol_\Sigma) = \deg(\phi)$.) \\
 \indent Explicitly, a Smith immersion is a map $f \colon \Sigma^k \to X^n$ for which there exists a function $\lambda \colon X \to [0,\infty)$ satisfying
\begin{align*}
f^*g_X & = \lambda^2 g_\Sigma \\
f^*\phi & = \lambda^k \vol_\Sigma.
\end{align*}
This terminology is somewhat unfortunate as \emph{Smith immersions need not be immersions}.  Indeed, if $f \colon \Sigma^k \to X^n$ is a Smith immersion, then each point $p \in \Sigma$ has either $\mathrm{rank}(df_p) = k$ or $0$.  In particular, constant maps are Smith immersions. \\
\indent We shall say that a Smith immersion $f \colon \Sigma \to X$ is \emph{strict at $p \in \Sigma$} if it is an immersion at $p$.  This is equivalent to requiring that $f$ be conformal at $p$ (i.e., $\lambda(p) > 0$).
 \end{defn}
 
  \begin{prop}[Invariance]\label{prop:Invariance} Let $f \colon (\Sigma^k, g_\Sigma, \vol_\Sigma) \to (X^n, g_X, \phi)$ be a Smith immersion.
 \begin{enumerate}[(a)]
 \item If $G \colon (S, g_S, \vol_S) \to (\Sigma, g_\Sigma, \vol_\Sigma)$ is conformally calibrating, then $f \circ G \colon S \to X$ is a Smith immersion.
  \item If $G \colon (X, g_X, \phi) \to (Y, g_Y, \beta)$ is conformally calibrating, then $G \circ f \colon \Sigma \to Y$ is a Smith immersion.
  \end{enumerate}
 \end{prop}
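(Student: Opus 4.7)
The plan is to reduce both parts directly to Proposition \ref{prop:Composition}, which establishes that the composition of conformally calibrating maps is again conformally calibrating. By definition, a Smith immersion is nothing other than a conformally calibrating map whose source carries the volume form as its calibration. So the entire content of the proposition amounts to: (i) checking that the composite map is conformally calibrating, which is immediate from Proposition \ref{prop:Composition}; and (ii) confirming that the source of the composite still carries the volume form as its calibration, which is automatic because pre-/post-composition does not alter the source.

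For part (a), the source of $f \circ G$ is $(S, g_S, \vol_S)$, and its calibration is by hypothesis the volume form $\vol_S$. Since $G$ and $f$ are both conformally calibrating, Proposition \ref{prop:Composition} yields that $f \circ G \colon (S, g_S, \vol_S) \to (X, g_X, \phi)$ is conformally calibrating. Because the source calibration is a volume form, $f \circ G$ is by definition a Smith immersion. For part (b), the argument is identical: the source of $G \circ f$ is $(\Sigma, g_\Sigma, \vol_\Sigma)$, whose calibration is the volume form, and Proposition \ref{prop:Composition} supplies that $G \circ f \colon (\Sigma, g_\Sigma, \vol_\Sigma) \to (Y, g_Y, \beta)$ is conformally calibrating, hence a Smith immersion.

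There is no substantive obstacle here; the proof is a formal observation that the subclass of conformally calibrating maps consisting of Smith immersions is closed under composition on either side with arbitrary conformally calibrating maps. If one preferred a direct verification avoiding Proposition \ref{prop:Composition}, one could simply note that the conformal factor of $f \circ G$ in case (a) is $\nu := \mu \cdot (\lambda \circ G)$, where $\lambda$ and $\mu$ are the conformal factors of $f$ and $G$, and check directly that $(f \circ G)^* g_X = \nu^2 g_S$ and $(f \circ G)^* \phi = \nu^k \vol_S$; case (b) is analogous with $\nu := \lambda \cdot (\mu \circ f)$. But this duplicates the computation already done in the proof of Proposition \ref{prop:Composition}, so invoking it directly is cleaner.
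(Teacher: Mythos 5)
Your proof is correct and takes the same approach as the paper, which likewise deduces the result immediately from Proposition \ref{prop:Composition}. The extra observation that the source calibration remaining a volume form is what makes the composite a Smith immersion (not merely conformally calibrating) is a reasonable bit of explicitness that the paper leaves implicit.
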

  
  \begin{proof} This is immediate from Proposition \ref{prop:Composition}.
  \end{proof}

\indent Strict Smith immersions are exactly the conformal maps that parametrize immersed calibrated submanifolds.  To be precise:
   
 \begin{prop}[\cite{cheng2021bubble}] \label{prop:SmithParamSub} If $f \colon (\Sigma^k, g_\Sigma, \vol_\Sigma) \to (X^n, g_X, \phi)$ is a strict Smith immersion, then its image $f(\Sigma)$ is an immersed $\phi$-calibrated submanifold.  Conversely, if $\iota \colon \Sigma \to (X^n, g_X, \phi)$ is an immersed $\phi$-calibrated submanifold, then equipping $\Sigma$ with the metric $g_\Sigma = \iota^*g_X$ and volume form $\vol_\Sigma = \iota^*\phi$ makes $\iota$ into a strict Smith immersion.
 \end{prop}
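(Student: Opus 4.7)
The plan is to verify both directions by a direct, pointwise computation in a convenient orthonormal frame, using the two defining equations $f^*g_X = \lambda^2 g_\Sigma$ and $f^*\phi = \lambda^k \vol_\Sigma$.

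For the forward direction, fix $p \in \Sigma$ and let $\{e_1, \ldots, e_k\}$ be an oriented $g_\Sigma$-orthonormal basis of $T_p\Sigma$. Since $f$ is strict at $p$ we have $\lambda(p) > 0$, so $\mathrm{rank}(df_p) = k$ and $f(\Sigma)$ is an immersed $k$-dimensional submanifold near $f(p)$. From $f^*g_X = \lambda^2 g_\Sigma$ one reads off that the vectors $\tilde e_i := \lambda(p)^{-1} df_p(e_i)$ form a $g_X$-orthonormal basis of the tangent $k$-plane $df_p(T_p\Sigma) \subset T_{f(p)}X$. Now compute
\begin{equation*}
\phi\bigl(\tilde e_1, \ldots, \tilde e_k\bigr) = \lambda(p)^{-k}\,\phi\bigl(df_p(e_1), \ldots, df_p(e_k)\bigr) = \lambda(p)^{-k}\,(f^*\phi)(e_1, \ldots, e_k) = \lambda(p)^{-k}\lambda(p)^k = 1,
\end{equation*}
using $f^*\phi = \lambda^k \vol_\Sigma$ in the penultimate step. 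Thus $\phi$ restricts to $+1$ on an oriented orthonormal basis of $df_p(T_p\Sigma)$, which (together with the comass-one property of $\phi$) is precisely the statement that this tangent plane is $\phi$-calibrated. Since $p$ was arbitrary, $f(\Sigma)$ is an immersed $\phi$-calibrated submanifold.

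For the converse, suppose $\iota \colon \Sigma \to X$ is an immersion with each $\iota_* T_p\Sigma$ being a $\phi$-plane. Define $g_\Sigma := \iota^*g_X$, which is a genuine Riemannian metric because $\iota$ is an immersion, and orient $\Sigma$ by declaring $\iota_*$ to be orientation-preserving onto the oriented $\phi$-planes. Let $\vol_{g_\Sigma}$ denote the metric volume form. For any $g_\Sigma$-orthonormal oriented frame $\{e_1, \ldots, e_k\}$ at $p$, the pushforward $\{\iota_* e_1, \ldots, \iota_* e_k\}$ is a $g_X$-orthonormal oriented basis of the $\phi$-plane $\iota_*T_p\Sigma$, so $(\iota^*\phi)(e_1, \ldots, e_k) = \phi(\iota_* e_1, \ldots, \iota_* e_k) = 1 = \vol_{g_\Sigma}(e_1, \ldots, e_k)$. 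Hence $\iota^*\phi = \vol_{g_\Sigma}$, and setting $\vol_\Sigma := \iota^*\phi$ the two Smith-immersion equations are satisfied with conformal factor $\lambda \equiv 1$; strictness is immediate from $\iota$ being an immersion.

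There is no real obstacle here: the proposition is essentially a book-keeping exercise translating the defining equations of a Smith immersion into the calibrated-plane condition (and back). The only subtle point worth stating explicitly is the orientation convention in the converse, which ensures that $\iota^*\phi$ is the positive volume form rather than its negative; once the orientation on $\Sigma$ is chosen to make $\iota$ orientation-preserving onto each calibrated tangent plane, the identification $\iota^*\phi = \vol_\Sigma$ is automatic.
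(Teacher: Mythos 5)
The paper does not supply a proof for this proposition; it simply cites \cite{cheng2021bubble}, so there is no internal argument to compare against. Your direct pointwise computation in a $g_\Sigma$-orthonormal frame is correct and is the natural argument: the rescaling $\tilde e_i = \lambda(p)^{-1}df_p(e_i)$ converts $f^*g_X = \lambda^2 g_\Sigma$ into the statement that $\{\tilde e_i\}$ is $g_X$-orthonormal, and then $f^*\phi = \lambda^k\vol_\Sigma$ gives $\phi(\tilde e_1, \ldots, \tilde e_k) = 1$, which together with comass one is exactly the $\phi$-plane condition; the converse runs the same identities backward with $\lambda \equiv 1$. One small remark on the converse: since a $\phi$-calibrated submanifold is by definition already oriented (and each $\iota_*T_p\Sigma$ is an \emph{oriented} $\phi$-plane), you do not need to \emph{choose} an orientation so much as use the given one, but this does not affect the correctness of the argument.
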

 
\indent A remarkable feature of Smith immersions is their $k$-harmonicity.  Recall that for a smooth map $f \colon (M^m, g_M) \to (N^n, g_N)$ between Riemannian manifolds, its \emph{$k$-energy} $E_k(f) \in [0,\infty]$ and \emph{$k$-tension} $\tau_k(f)$, a section of $f^*TN$, are
\begin{align*}
E_k(f) & = \frac{1}{(\sqrt{k})^k} \int_M |df|^k\,\vol_M, & \tau_k(f) & = \mathrm{div}(|df|^{k-2} df).
\end{align*}
It is well-known that the critical points of the $k$-energy functional are precisely the \emph{$k$-harmonic maps}, i.e., those satisfying the second-order PDE $\tau_k(f) = 0$.  \\
\indent Now, let $f \colon (\Sigma^k, g_\Sigma, \vol_\Sigma) \to (X^n, g_X, \phi)$ be a smooth map, where $\phi \in \Omega^k(X)$ is a calibration.  Let $\vol_{f^*g_X} := \left| \frac{\partial f}{\partial x_1} \wedge \cdots \wedge \frac{\partial f}{\partial x^k}  \right| dx^1 \wedge \cdots \wedge dx^k$ in local coordinates $(x^1, \ldots, x^k)$, and set $V(f) := \int_\Sigma \vol_{f^*g_X}$.  If $f$ is an immersion, then $\vol_{f^*g_X}$ is the volume form of the induced metric $f^*g_X$.  Then the following fundamental inequality holds \cite{cheng2023variational}, \cite{ikonen2024liouville}:
\begin{equation} \label{eq:FundIneq}
\frac{1}{(\sqrt{k})^k}|df|^k \vol_\Sigma \geq \vol_{f^*g_X} \geq f^*\phi.
\end{equation}
Moreover, we have equality $\frac{1}{(\sqrt{k})^k}|df|^k \vol_\Sigma = \vol_{f^*g_X} = f^*\phi$ if and only if $f$ is Smith.  Now, if $\Sigma$ is compact, then integrating (\ref{eq:FundIneq}) gives:
$$E_k(f) \geq V(f) \geq \int_\Sigma f^*\phi.$$
This shows that if $\Sigma$ is compact and without boundary, then Smith immersions $f \colon \Sigma \to X$ are homological minimizers of the $k$-energy functional.  In fact, we have the following mapping analog of the fundamental theorem of calibrations:

\begin{thm}[Smith's theorem \cite{smith2011theory}] If $f \colon (\Sigma^k, g_\Sigma, \vol_\Sigma) \to (X^n, g_X, \phi)$ is a Smith immersion, then $f$ is $k$-harmonic.  Moreover, if $\Sigma$ is compact and without boundary, then $f$ is $k$-energy minimizing in its homology class.
\end{thm}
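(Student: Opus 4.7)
The plan is to derive both conclusions from the fundamental inequality (\ref{eq:FundIneq}) by combining it with Stokes' theorem and the closedness of $\phi$. First I would handle the homological $k$-energy minimization in the compact case as a direct corollary, and then I would localize the same argument to deduce $k$-harmonicity.

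For the global statement, assume $\Sigma$ is compact without boundary and let $\tilde f \colon \Sigma \to X$ be homologous to $f$. Because $\phi$ is closed, $\int_\Sigma h^*\phi$ depends only on the homology class of $h$, so $\int_\Sigma \tilde f^*\phi = \int_\Sigma f^*\phi$. Applying (\ref{eq:FundIneq}) to $\tilde f$, and noting that the Smith condition promotes (\ref{eq:FundIneq}) to an equality for $f$ itself, one obtains
$$E_k(\tilde f) \;\geq\; \int_\Sigma \tilde f^*\phi \;=\; \int_\Sigma f^*\phi \;=\; E_k(f),$$
which is the desired homological minimization.

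For $k$-harmonicity I would repeat the argument locally. Given a smooth variation $(f_t)_{t \in (-\epsilon,\epsilon)}$ of $f = f_0$ supported inside a precompact open $\Omega$ with $\overline\Omega \subset \Sigma$, form the cylinder $H \colon \overline\Omega \times [0,1] \to X$, $H(x,t) = f_t(x)$, and apply Stokes' theorem to $d(H^*\phi) = H^* d\phi = 0$. Since the variation is trivial on $\partial\Omega$, the map $H$ restricted to $\partial\Omega \times [0,1]$ is independent of $t$ and factors through the $(k-1)$-dimensional $\partial\Omega$, so the lateral contribution of a $k$-form vanishes. Hence $\int_\Omega f_t^*\phi = \int_\Omega f^*\phi$, and combining with (\ref{eq:FundIneq}) and the Smith condition for $f$ yields
$$E_k(f_t|_\Omega) \;\geq\; \int_\Omega f_t^*\phi \;=\; \int_\Omega f^*\phi \;=\; E_k(f|_\Omega).$$
So $f|_\Omega$ minimizes $E_k$ among all compactly supported variations, and differentiating in $t$ at $t = 0$ produces the Euler-Lagrange equation $\tau_k(f) = 0$ on $\Omega$.

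The main subtlety is interpretational rather than analytic. For $k > 2$ the operator $\tau_k$ is degenerate-elliptic at critical points of $f$, so $\tau_k(f) = 0$ has to be understood in the weak (distributional) sense there. But this is precisely what differentiating a minimization inequality delivers, so no additional regularity work is required; and at strict points, where $|df| > 0$, the equation reduces to its classical form. This sidestepping of the degenerate locus is the only place where care is needed.
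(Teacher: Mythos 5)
Your proof is correct, and it reproduces the paper's own sketch: the compact-case argument is exactly the integration of (\ref{eq:FundIneq}) together with the homological invariance of $\int_\Sigma h^*\phi$ for closed $\phi$ that the paper presents in the paragraph preceding the theorem, and the Stokes-on-the-cylinder localization you use to get $k$-harmonicity is the standard calibrations-style route to the Euler--Lagrange equation (mirroring the proof of the fundamental theorem of calibrations for submanifolds). The paper itself defers to \cite{smith2011theory} rather than reproving the result, so there is no separate proof in the paper to compare against beyond that sketch; the only point worth stating more explicitly in your write-up is the one you already flag, namely that differentiating the minimizing inequality yields $\tau_k(f)=0$ a priori only in the weak sense, which is consistent with the paper's remark that $\tau_k(f)$ need not be smooth for $k$ odd.
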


\begin{rmk} Note that if a Smith immersion $f \colon (\Sigma^k, g_\Sigma, \vol_\Sigma) \to (X^n, g_X, \phi)$ is isometric, then it is simultaneously $2$-harmonic and $k$-harmonic.
\end{rmk}

\indent We pause to make some historical remarks and clarify terminology.  Smith immersions were introduced in 2011 by Smith \cite{smith2011theory}, who termed them ``multiholomorphic maps."  They were then studied in \cite{cheng2021bubble} and \cite{cheng2023variational} where they were renamed ``Smith maps."  In \cite{iliashenko2023special}, a dual notion of ``Smith submersion" was proposed, and shown to be related to calibrated fibrations.  (We will not consider Smith submersions in this work.) \\
\indent Independently, Pankka \cite{pankka2020quasiregular} has defined a generalization of Smith immersions known as ``$K$-quasiregular $\phi$-curves," where $K \geq 1$ is a constant called the \emph{distortion}.  Those with $K = 1$ are called ``conformal $\phi$-curves," which is another name for Smith immersions.  The $K$-quasiregular perspective has received considerable attention: see, for example, \cite{heikkila2024quasiregularcohom}, \cite{heikkila2023quasiregular},  \cite{hitruhin2025quasiconformal}, \cite{ikonen2024quasiregular},  \cite{onninen2021quasiregular}.  In particular, Ikonen and Pankka \cite{ikonen2024liouville} proved a remarkable generalization of Liouville's theorem on conformal maps that holds for a large class of Smith immersions. \\
\indent We now mention some examples of interest.

\begin{example} ${}$
\begin{enumerate}
\item Let $(X^n, g_X, \vol_X)$ be an oriented Riemannian $n$-manifold.  Then a map $f \colon (\Sigma^n, g_\Sigma, \vol_\Sigma) \to (X^n, g_X, \vol_X)$ is Smith if and only if it is an orientation-preserving, weakly conformal map.
\item Let $(X^{2n}, g_X, \omega_X)$ be a K\"{a}hler manifold.  Then a map $f \colon (\Sigma^2, g_\Sigma, \vol_\Sigma) \to (X^{2n}, g_X, \omega_X)$ is Smith if and only if it is a holomorphic curve.
\item Let $(X^7, g_X, \varphi)$ be a $7$-manifold with a closed $\G_2$-structure $\varphi \in \Omega^3(X)$.  The Smith immersions $f \colon (\Sigma^3, g_\Sigma, \vol_\Sigma) \to (X^7, g_X, \varphi)$ are called \emph{associative Smith maps}.  These provide $3$-harmonic, weakly conformal parametrizations of associative submanifolds.
\item Let $(X^7, g_X, \ast\varphi)$ be a $7$-manifold with a co-closed $\G_2$-structure $\ast\varphi \in \Omega^4(X)$.  The Smith immersions $f \colon (\Sigma^4, g_\Sigma, \vol_\Sigma) \to (X^7, g_X, \ast \varphi)$ are called \emph{coassociative Smith maps}.  These provide $4$-harmonic, weakly conformal parametrizations of coassociative submanifolds.
\end{enumerate}
Similar remarks and terminology applies to ``special Lagrangian Smith maps" into Calabi-Yau $2n$-manifolds, and to ``Cayley Smith maps" into $\Spin(7)$-manifolds, for example.
\end{example}

\subsection{The Poincar\'{e} metric on the $n$-ball} \label{sub:Hyperbolic-Space}

\indent \indent Finally, we recall the Poincar\'{e} metric on the $n$-ball.  Let $B^n(R) = \{x \in \R^n \colon |x| < R\}$ be the $n$-ball of radius $R > 0$.  On $B^n(R)$, the \emph{Poincar\'{e} metric} $g_R$ and \emph{Poincar\'{e} volume form} $\vol_R \in \Omega^n(B^n(R))$ are given by
\begin{align*}
g_R & = \frac{R^4}{(R^2 - |x|^2)^2}\, g_0 & \vol_R & = \left( \frac{R^2}{R^2 - |x|^2} \right)^n \vol_0,
\end{align*}
where $g_0$ and $\vol_0$ are the standard flat metric and volume form, respectively.  The scale factor in $g_R$ was chosen to ensure that for vectors $v \in T_0B^n(1)$ based at the origin, the Poincar\'{e} norm $|v|_1 = \sqrt{g_1(v,v)}$ equals the euclidean norm $|v|_0 = \sqrt{g_0(v,v)}$.  The following proposition summarizes some standard facts about the Poincar\'{e} metric that we will use in this work.

\begin{prop} \label{prop:Poincare-Facts} ${}$
\begin{enumerate}[(a)]
\item The Poincar\'{e} metric $g_R$ is conformal to the flat metric $g_0$.  Consequently:
\begin{itemize}
\item The identity map $\mathrm{Id} \colon (B^n(R), g_R, \vol_R) \to (B^n(R), g_0, \vol_0)$ is a Smith equivalence.
\item The inclusion map $\iota \colon (B^n(R), g_R, \vol_R) \hookrightarrow (\R^n, g_0, \vol_0)$ is a local Smith equivalence.
\end{itemize}
\item The Poincar\'{e} metric $g_R$ is complete and has constant sectional curvature $-\frac{4}{R^2}$.
\item The group of conformal automorphisms $\mathrm{ConfAut}(g_R)$ acts transitively and by isometries on $(B^n(R), g_R)$.
\item The induced distance function $\mathrm{dist}_R \colon B^n(R) \times B^n(R) \to \R$ is given by the explicit formula
$$\mathrm{dist}_R(x,y) = R \operatorname{arcsinh}  \sqrt{\frac{R^2 |x-y|^2}{ (R^2 - |x|^2)(R^2 - |y|^2) }}.$$
\end{enumerate}
\end{prop}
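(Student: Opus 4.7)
The plan is to verify each item in sequence. Since this proposition just collects standard properties of the Poincar\'{e} model, each step reduces to an elementary computation combined with facts already established in the paper or classical results for hyperbolic space.

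For (a), the formula $g_R = \Omega^2 g_0$ with $\Omega = R/(R^2 - |x|^2) > 0$ makes conformality manifest, and a direct computation gives $\vol_R = \Omega^n \vol_0$. Hence the identity $\mathrm{Id}\colon (B^n(R), g_R, \vol_R) \to (B^n(R), g_0, \vol_0)$ is a Smith equivalence by Example \ref{example:Inclusion}(a). Composing this with the local Smith equivalence $\iota\colon (B^n(R), g_0, \vol_0) \hookrightarrow (\R^n, g_0, \vol_0)$ from Example \ref{example:Inclusion}(b) and applying Proposition \ref{prop:Composition} gives the second bulleted assertion.

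For (b), completeness reduces (by rotational symmetry) to showing that a radial ray from the origin has infinite $g_R$-length, which follows from $\int_0^R R/(R^2 - t^2)\,dt = \infty$. For the claim that the sectional curvature is constantly $-4/R^2$, I would apply the conformal transformation law for the Riemann tensor to $\widetilde{g} = e^{2\varphi} g_0$ with $\varphi = \log \Omega$: computing $\nabla_0 \varphi$, $\mathrm{Hess}_0(\varphi)$, and $|\nabla_0 \varphi|_0^2$ directly yields $\mathrm{Sec} = -4/R^2$ pointwise. Alternatively, one may use rotational symmetry about $0$ to reduce to a curvature calculation at a single point, or invoke the classical identification of $(B^n(R), g_R)$ with the Poincar\'{e} ball model of hyperbolic $n$-space.

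For (c), the Möbius transformations of $\R^n \cup \{\infty\}$ that preserve $B^n(R)$ form a group acting transitively, as witnessed by an explicit inversion-type map sending any prescribed $a \in B^n(R)$ to $0$. These maps are $g_0$-conformal by Liouville's theorem, and a direct verification shows that they pull back $\Omega^2 g_0$ to itself; equivalently, they are $g_R$-isometries. For (d), the isometry transitivity from (c) reduces the computation to $y = 0$, where integration along the radial geodesic gives $\mathrm{dist}_R(0, x) = R \operatorname{arctanh}(|x|/R)$; the elementary identity $\operatorname{arcsinh}(t/\sqrt{1 - t^2}) = \operatorname{arctanh}(t)$ matches this against the claimed formula at $y = 0$. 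One then checks that the cross-ratio-like quantity $R^2 |x - y|^2/((R^2 - |x|^2)(R^2 - |y|^2))$ is invariant under the Möbius transformations of (c), propagating the formula to all pairs $(x,y)$.

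The main obstacle is the algebraic verification that the ball-preserving Möbius transformations act by $g_R$-isometries and that the cross-ratio-like quantity in (d) is invariant under them. Both are classical but slightly tedious; they can be shortened by passing to the upper half-space model, where the isometry group has an explicit generating set (translations, dilations, rotations, inversions) whose action on the metric is trivial to verify.
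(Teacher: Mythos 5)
The paper states Proposition \ref{prop:Poincare-Facts} without proof, so there is no in-paper argument to compare against; your high-level plan (conformal curvature transformation, radial geodesic length, M\"obius transitivity) is the natural route for this collection of standard facts. However, the computations you only assert in (b) and (d) do not actually produce the claimed values with the formula $g_R = \frac{R^2}{(R^2-|x|^2)^2}\,g_0$ as printed in the paper. The dilation $\psi(x)=Rx$ satisfies $\psi^*g_R = g_1$ (a one-line check), so $(B^n(R),g_R)$ is \emph{isometric} to $(B^n(1),g_1)$, which forces the sectional curvature to be independent of $R$. Carrying out your conformal-change computation at the origin, where $\nabla\varphi=0$, $\mathrm{Hess}(\varphi)=\tfrac{2}{R^2}\,g_0$, and $e^{2\varphi}=\tfrac{1}{R^2}$, gives $\mathrm{Sec}\equiv -4$, not $-\tfrac{4}{R^2}$. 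Likewise the radial-length integral is $\int_0^{|x|}\tfrac{R}{R^2-t^2}\,dt = \mathrm{arctanh}(|x|/R)$, \emph{without} the leading factor of $R$ you wrote, so the claimed formula for $\mathrm{dist}_R$ is also off by a factor of $R$.

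The values you wrote down --- curvature $-\tfrac{4}{R^2}$ and $\mathrm{dist}_R(0,x)=R\,\mathrm{arctanh}(|x|/R)$ --- are what the alternative normalization $g_R=\frac{R^4}{(R^2-|x|^2)^2}\,g_0$ yields, and that is very likely what the authors intended: it makes $g_R|_0=g_0$ for every $R$, and it is the version consistent with the constants $S=\tfrac{4}{R_1^2}k(k-1)$, $B=\tfrac{4}{R_2^2}$ used in the proof of Proposition \ref{thm:ClassicSchwarz}. So the discrepancy traces to an inconsistency (most plausibly a typo in $g_R$) in the paper rather than a conceptual flaw in your outline. But because you asserted the outcomes of the calculations rather than working them through, you reproduced the inconsistency without detecting it; actually performing either computation in (b) or (d) would have flagged that the displayed $g_R$, the stated curvature, and the stated distance formula cannot all hold simultaneously.
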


\section{$\phi$-Hyperbolicity}

\subsection{Definition and examples}

\indent \indent We now begin our study of hyperbolicity in earnest.  To get started, we generalize the notion of ``Brody hyperbolicity" from the K\"{a}hler setting to arbitrary calibrated manifolds.

\begin{defn}  Let $(X, g_X, \phi)$ be a calibrated $n$-manifold, where $\phi \in \Omega^k(X)$.  Say $(X, g_X)$ is \emph{$\phi$-hyperbolic} if every Smith immersion $(\R^k, g_0, \vol_0) \to (X, g_X, \phi)$ is constant.
\end{defn}
\noindent  When $g_X$ is clear from context, we will often simply say ``$X$ is $\phi$-hyperbolic."  The following is immediate.

\begin{prop} Let $(X, g_X, \phi)$ be a calibrated manifold.  If $X$ is $\phi$-hyperbolic, then $X$ contains no $\phi$-calibrated submanifold conformally diffeomorphic to $(\R^k, g_0)$.
\end{prop}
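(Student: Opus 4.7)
The plan is to prove the contrapositive: assume $X$ contains a $\phi$-calibrated submanifold $\Sigma$ that is conformally diffeomorphic to $(\R^k, g_0)$, and construct from it a non-constant Smith immersion $(\R^k, g_0, \vol_0) \to (X, g_X, \phi)$, contradicting $\phi$-hyperbolicity.

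The argument proceeds in three short steps. First, I invoke Proposition \ref{prop:SmithParamSub}: equipping $\Sigma$ with the induced metric $g_\Sigma := \iota^*g_X$ and the induced volume form $\vol_\Sigma := \iota^*\phi$ turns the inclusion $\iota \colon (\Sigma, g_\Sigma, \vol_\Sigma) \hookrightarrow (X, g_X, \phi)$ into a strict Smith immersion. Second, let $F \colon (\R^k, g_0) \to (\Sigma, g_\Sigma)$ be the given conformal diffeomorphism. If $F$ reverses orientation, precompose with a reflection of $\R^k$ (itself a conformal diffeomorphism of $(\R^k, g_0, \vol_0)$) to obtain an orientation-preserving conformal diffeomorphism. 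Then, by the first item in the example following Definition 2.4, $F$ is a Smith equivalence between the oriented calibrated $k$-manifolds $(\R^k, g_0, \vol_0)$ and $(\Sigma, g_\Sigma, \vol_\Sigma)$, where the identification $\vol_\Sigma = \vol_{g_\Sigma}$ uses the $\phi$-calibrated property of $\Sigma$.

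Third, apply Proposition \ref{prop:Invariance}(a) with the Smith immersion $\iota$ and the conformally calibrating map $F$: the composition $\iota \circ F \colon (\R^k, g_0, \vol_0) \to (X, g_X, \phi)$ is a Smith immersion. Since $F$ is a diffeomorphism and $\iota$ is an immersion, $\iota \circ F$ is an immersion, in particular not constant, which contradicts $\phi$-hyperbolicity.

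The proof is essentially a bookkeeping exercise once the right tools are in place; there is no deep obstacle. The only subtle point is the orientation issue in Step 2, which is handled by precomposing with a reflection if necessary so as to match the convention that Smith equivalences between top-degree calibrated manifolds are orientation-preserving conformal diffeomorphisms.
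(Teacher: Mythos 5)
Your proof is correct, and it spells out carefully the argument the paper implicitly deems ``immediate.'' The three steps --- Proposition \ref{prop:SmithParamSub} to realize the inclusion as a strict Smith immersion, the identification $\iota^*\phi = \vol_{g_\Sigma}$ from the $\phi$-calibrated property together with the Example characterizing Smith equivalences of top-degree calibrations, and Proposition \ref{prop:Invariance}(a) to compose --- are exactly the right tools, and the orientation check (precomposing with a reflection if the conformal diffeomorphism reverses orientation, so that the pullback of the volume form carries the correct sign) is a genuine, if small, subtlety that you handled correctly.
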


\noindent Note, however, that if $X$ is $\phi$-hyperbolic, there may exist $\phi$-calibrated submanifolds of $X$ that are conformally diffeomorphic to the flat $k$-ball $(B^k, g_0)$. Here are some basic properties of $\phi$-hyperbolicity.  The proofs are straightforward.

\pagebreak

\begin{prop} \label{prop:Basic} Let $(X, g_X, \phi)$ be a calibrated manifold.
\begin{enumerate}[(a)]
\item Suppose $(X, g_X, \phi)$ is Smith equivalent to $(Y, g_Y, \psi)$.  Then $(X, g_X)$ is $\phi$-hyperbolic if and only if $(Y, g_Y)$ is $\psi$-hyperbolic.
\item If $(X, g_X)$ is $\phi$-hyperbolic, then for every open set $U \subset X$, the manifold $(U, g_X|_U)$ is $\phi$-hyperbolic.
\end{enumerate}
\end{prop}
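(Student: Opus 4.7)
The plan is to deduce both parts directly from the invariance properties of Smith immersions under composition with conformally calibrating maps, namely Proposition \ref{prop:Composition} and Proposition \ref{prop:Invariance}.

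For part (a), the first step is to observe that the inverse of a Smith equivalence is again a Smith equivalence. Indeed, if $F \colon (X, g_X, \phi) \to (Y, g_Y, \psi)$ is a diffeomorphism with $F^*g_Y = \lambda^2 g_X$ and $F^*\psi = \lambda^k \phi$ for some positive $\lambda \colon X \to \R^+$, then setting $\mu := (\lambda \circ F^{-1})^{-1} \colon Y \to \R^+$ gives $(F^{-1})^*g_X = \mu^2 g_Y$ and $(F^{-1})^*\phi = \mu^k \psi$. With this symmetry in hand, it suffices to prove one direction. Suppose $X$ is $\phi$-hyperbolic and let $f \colon (\R^k, g_0, \vol_0) \to (Y, g_Y, \psi)$ be a Smith immersion. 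By Proposition \ref{prop:Invariance}(b) applied to the conformally calibrating map $F^{-1}$, the composition $F^{-1} \circ f \colon (\R^k, g_0, \vol_0) \to (X, g_X, \phi)$ is a Smith immersion, hence constant. Since $F^{-1}$ is a diffeomorphism, $f$ is constant as well. The reverse implication follows by swapping the roles of $X$ and $Y$.

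For part (b), recall from Example \ref{example:Inclusion}(b) that the inclusion $\iota \colon (U, g_X|_U, \phi|_U) \hookrightarrow (X, g_X, \phi)$ is a local Smith equivalence, and in particular a conformally calibrating map. Given any Smith immersion $f \colon (\R^k, g_0, \vol_0) \to (U, g_X|_U, \phi|_U)$, Proposition \ref{prop:Invariance}(b) shows that $\iota \circ f \colon (\R^k, g_0, \vol_0) \to (X, g_X, \phi)$ is a Smith immersion. Since $X$ is $\phi$-hyperbolic, $\iota \circ f$ is constant; injectivity of $\iota$ then forces $f$ itself to be constant.

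There is no serious obstacle here, as the entire argument is a formal consequence of the composition and invariance statements already established; the only minor point requiring care is verifying that the inverse of a Smith equivalence is conformally calibrating, which is an elementary calculation with the conformal factor.
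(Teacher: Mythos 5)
Your proof is correct and gives what is surely the intended "straightforward" argument — the paper itself omits the proof with that remark. Both parts reduce cleanly to Proposition~\ref{prop:Invariance}(b), with the only nontrivial step being your verification that the inverse of a Smith equivalence is again conformally calibrating, and that calculation (with $\mu = (\lambda \circ F^{-1})^{-1}$) is right. A minor stylistic note: once $F^{-1}\circ f$ (resp.\ $\iota\circ f$) is known to be constant, $f$ is constant simply by composing with $F$ (resp.\ by noting that $\iota$ is set-theoretically the identity on $U$); invoking injectivity is harmless but not needed.
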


\indent We emphasize that the converse Proposition \ref{prop:Basic}(b) is false.  That is, if $(X, g_X)$ fails to be $\phi$-hyperbolic, then certain open subsets of $X$ could nevertheless be $\phi$-hyperbolic.  We now provide several examples and non-examples.

\begin{example}[K\"{a}hler calibrations]  Let $(X^{2m}, g_X, \omega)$ be a K\"{a}hler manifold.  Then:
$$(X, g_X) \text{ is }\omega\text{-hyperbolic} \ \iff \ X \text{ is Brody hyperbolic.}$$
Note that the Brody hyperbolicity of a K\"{a}hler manifold depends only on the complex structure (i.e., it is independent of the metric).
\begin{itemize}
\item In complex dimension one, a closed Riemann surface is Brody hyperbolic if and only if it has genus $\geq 2$.  A non-compact Riemann surface is Brody hyperbolic if and only if it is not biholomorphic to $\C$ or $\C^*$.
\item In a Stein manifold (e.g., $\C^n$), every bounded domain is Brody hyperbolic by Liouville's theorem.
\item Brody hyperbolicity is preserved by products, fibrations, and universal covers.
\item In $\CP^n$, a generic hypersurface of sufficiently large degree is Brody hyperbolic \cite{siu2015hyperbolicity}, \cite{brotbek2017hyperbolicity}.
\item In a complex torus, the Brody hyperbolic submanifolds are precisely those that do not contain a non-trivial sub-torus.
\item Hermitian manifolds with holomorphic sectional curvature bounded above by a negative constant are Brody hyperbolic.
\end{itemize}
\end{example}

\begin{example}[Flat $\R^n$ and $\R^n/\Z^n$] \label{example:Flat} Let $\phi \in \Lambda^k(\R^n)^*$ be a constant-coefficient calibration.  Let $T^n = \R^n/\Z^n$, and let $\pi \colon \R^n \to T^n$ be the projection.  By Proposition \ref{prop:Discrete}, there exists a calibration $\phi^\vee \in \Omega^k(T^n)$ having $\phi = \pi^*\phi^\vee$.
\begin{enumerate}[(a)]
\item The calibrated manifold $(\R^n, g_0, \phi)$ is \emph{not} $\phi$-hyperbolic.  In particular, $(\R^n, g_0, \vol_0)$ is \emph{not} $\vol_0$-hyperbolic.
\item The calibrated manifold $(T^n, g_0, \phi^\vee)$ is \emph{not} $\phi^\vee$-hyperbolic.
\end{enumerate}
\end{example}

\begin{example}[Volume form] Equip $(X^n, g_X)$ with the volume form calibration $\vol_X \in \Omega^n(X)$.
\begin{itemize}
\item If $\mathrm{Scal}_X \leq -A$ for some $A > 0$, then $(X, g_X)$ is $\mathrm{vol}_X$-hyperbolic.  We will prove this in Corollary \ref{cor:Vol-Hyperbolic}. Example \ref{example:Flat}(a) shows that this result cannot be improved to $A \geq 0$.
\item We remark that if $(X, g_X)$ is not $1$-quasiregularly elliptic, then $(X, g_X)$ is $\vol_X$-hyperbolic.  (We refer to \cite{bonk2001quasiregular} for a discussion of $K$-quasiregular ellipticity.)
\end{itemize}
\end{example}

\begin{example}[Quaternionic-K\"{a}hler calibrations] When $(X,g_X)$ is a quaternionic-K\"{a}hler $4m$-manifold, we use $\Psi \in \Omega^4(X)$ to denote the QK $4$-form.
\begin{itemize}
\item Quaternionic hyperbolic space $(\mathbb{HH}^n, g_{\mathbb{HH}^n})$ is $\Psi$-hyperbolic.  (See Corollary \ref{prop:HypSpacesRphi}(c).)
\item Quaternionic projective space $(\mathbb{HP}^n, g_{\mathbb{HP}^n})$ is \emph{not} $\Psi$-hyperbolic.  (Indeed, the standard embedding of $\mathbb{HP}^1$ minus a point can be parametrized by a Smith immersion from flat $\R^4$.)
\end{itemize}
\end{example}

\begin{example}[Associative calibrations] When $(X^7, g_X)$ is an oriented Riemannian $7$-manifold, we use $\varphi \in \Omega^3(X)$ to denote a $\G_2$-structure on $X$.
\begin{itemize}
\item As mentioned above, flat $\R^7$ and flat $T^7$ (equipped with the flat $\G_2$-structures $\varphi_0$) are \emph{not} $\varphi_0$-hyperbolic. However, every bounded open subset $U \subset \R^7$ is $\varphi_0$-hyperbolic (see Corollary \ref{cor:BoundedDomains}).
\item The Bryant-Salamon $7$-manifolds $\Lambda^2_-(S^4)$, $\Lambda^2_-(\CP^2)$, and $\Spinor(S^3)$ are \emph{not} $\varphi$-hyperbolic.  Indeed, all three contain associative submanifolds that are conformally diffeomorphic to flat $\R^3$.
\end{itemize}
Analogous remarks hold for Cayley calibrations.
\end{example}

\begin{example}[Trivial $\phi$-hyperbolicity] If a calibrated manifold $(X, g_X, \phi)$ admits no $\phi$-calibrated submanifolds, then $(X,g_X)$ is trivially $\phi$-hyperbolic.  (Note that this may occur even if $\mathrm{Gr}(\phi)$ is relatively large.)  Here are two examples of this.
\begin{itemize}
\item Let $(X^6, g_X)$ be a strict nearly-K\"{a}hler $6$-manifold, and let $(\omega, \Upsilon) \in \Omega^2(X) \oplus \Omega^3(X; \C)$ denote the $\SU(3)$-structure with the convention  that $d\omega = 3\,\mathrm{Re}(\Upsilon)$.  It is well-known that $X$ admits no $\mathrm{Re}(\Upsilon)$-calibrated $3$-folds (even locally), so $X$ is trivially $\mathrm{Re}(\Upsilon)$-hyperbolic.
\item Let $(X^7, g_X)$ be a $7$-manifold with a nearly-parallel $\G_2$-structure, and let $\psi \in \Omega^4(X)$ be the coassociative $4$-form.  It is well-known that $X$ admits no coassociative $4$-folds (even locally), so $X$ is trivially $\psi$-hyperbolic.
\end{itemize}
\end{example}

\subsection{Domains in $\R^n$}

\indent \indent For domains $U \subset \R^n$ equipped with the flat metric, we now make some basic observations about $\phi$-hyperbolicity.  First, we set terminology.

\begin{defn} A \emph{domain in $\R^n$} is a non-empty, open, connected subset $U \subset \R^n$.  When a calibration $\phi \in \Omega^k(\R^n)$ is specified, we will typically regard a domain $U \subset \R^n$ as the calibrated manifold $(U, g_0, \phi)$.  (An important exception to this is the open $n$-ball $B^n$, which we typically equip with the Poincar\'{e} metric $g_1$.)  Two domains in $\R^n$ are \emph{congruent} if they differ by a rigid motion of $\R^n$ (i.e., an element of the euclidean group $\mathrm{E}(n) = \R^n \rtimes \mathrm{O}(n)$.)
\end{defn}

\begin{rmk} The property of ``$\phi$-hyperbolicity" is generally not preserved by ambient isometries.  In particular, if $U_1, U_2 \subset \R^n$ are congruent domains, the $\phi$-hyperbolicity of $U_1$ need not imply the $\phi$-hyperbolicity of $U_2$.   For illustration, consider $(\R^4, g_0)$ with coordinates $(x_1, y_1, x_2, y_2)$, and let $\omega = dx_1 \wedge dy_1 + dx_2 \wedge dy_2 \in \Omega^2(\R^4)$ be the standard K\"{a}hler form.  Then the domains
\begin{align*}
U_1 & = \{ (x_1)^2 + (x_2)^2 < 1 \}, & U_2 & = \{ (x_1)^2 + (y_1)^2 < 1 \}
\end{align*}
are congruent.  Since $U_2$ contains a complex line, we see that $U_2$ is not $\omega$-hyperbolic (i.e., Brody hyperbolic).  On the other hand, since $U_1$ is convex and does not contain any complex lines, a theorem of Barth \cite{MR572300} implies that $U_1$ is $\omega$-hyperbolic (i.e., Brody hyperbolic).
\end{rmk}

\indent The following definitions were first introduced in \cite{ikonen2024liouville}.

\begin{defn} Let $\phi \in \Lambda^k(\R^n)^*$ be a constant-coefficient calibration.
\begin{itemize}
\item Say $\phi$ is \emph{conformally rigid} if every conformally flat, $\phi$-calibrated submanifold of $\R^n$ is flat.
\item Let $U \subset \R^k$ be a domain.  An \emph{inner M\"{o}bius curve} is a map $f \colon U \to \R^n$ of the form $f(x) = y_0 + L(v(x))$, where $y_0 \in \R^n$, $L \colon \R^k \to \R^n$ is an orthogonal linear map, and $v \colon U \to \R^k$ is the restriction of a non-constant M\"{o}bius transformation $S^k \to S^k$, where we view $S^k = \R^k \cup \{\infty\}$.
\item Say $\phi$ is \emph{inner M\"{o}bius rigid} if every non-constant Smith immersion $f \colon (U, g_0, \vol_0) \to (\R^n, g_0, \phi)$ is an inner M\"{o}bius curve.
\end{itemize}
\end{defn}

\begin{thm}[\cite{ikonen2024liouville}] Let $\phi \in \Lambda^k(\R^n)^*$ be a constant-coefficient calibration, $k \geq 3$.  Then $\phi$ is conformally rigid if and only if $\phi$ is inner M\"{o}bius rigid.
\end{thm}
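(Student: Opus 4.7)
The plan is to establish both implications by exploiting the correspondence between $\phi$-calibrated submanifolds and Smith immersions recorded in Proposition \ref{prop:SmithParamSub}. The crucial geometric observation is that an inner M\"{o}bius curve $f(x) = y_0 + L(v(x))$ has image contained in the affine $k$-plane $y_0 + L(\R^k)$, and thus traces out an extrinsically flat portion of $\R^n$. So up to Liouville's theorem, the two rigidity conditions are really expressing the same phenomenon from the ``submanifold'' and ``parametrized map'' points of view.

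For the implication $\phi$ inner M\"{o}bius rigid $\Rightarrow$ $\phi$ conformally rigid, I would take a conformally flat $\phi$-calibrated submanifold $M^k \subset \R^n$ and work locally. Since $M$ is conformally flat and $k \geq 3$, each point of $M$ lies in the image of a conformal chart $\psi\colon U \to M$ from a domain $U \subset \R^k$ with $\psi^*(g_0|_M) = \lambda^2 g_0$ for some $\lambda > 0$. Composing with the inclusion $\iota\colon M \hookrightarrow (\R^n,g_0,\phi)$ yields $f := \iota \circ \psi\colon U \to \R^n$ satisfying $f^*g_0 = \lambda^2 g_0$, and moreover $f^*\phi = \psi^*(\iota^*\phi) = \psi^*(\vol_M) = \lambda^k \vol_0$, since $\iota^*\phi = \vol_M$ by the $\phi$-calibrated condition on $M$. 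Thus $f$ is a strict Smith immersion, which by hypothesis is an inner M\"{o}bius curve, forcing $\psi(U)$ to lie in an affine $k$-plane. A connectedness argument (the affine plane containing the neighborhood of each point must agree with the one determined by the tangent space $T_xM \subset T_x\R^n$) then pushes this locally-affine conclusion to a global one, so that $M$ lies in a single affine $k$-plane and is flat.

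For the converse $\phi$ conformally rigid $\Rightarrow$ $\phi$ inner M\"{o}bius rigid, I would start with a non-constant Smith immersion $f\colon U \to \R^n$ and restrict attention to its strict locus $U^* = \{x \in U : \lambda(x) > 0\}$, which is open and nonempty. On $U^*$ the map $f$ is a conformal immersion whose image is, near each point, an embedded $\phi$-calibrated submanifold with induced metric $\lambda^2 g_0$, hence conformally flat. Conformal rigidity then places this image inside an affine $k$-plane $y_0 + L(\R^k)$ with $L$ orthogonal, allowing us to write $f = y_0 + L \circ v$ on a component of $U^*$, where $v\colon U^* \to \R^k$ is a conformal map. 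Classical Liouville's theorem (which requires $k \geq 3$) identifies $v$ as the restriction of a M\"{o}bius transformation of $S^k$. Since $U$ is connected and the strict locus $U^*$ is dense (an analytic-type consequence of $f$ being non-constant and $k$-harmonic), the data $(y_0,L,v)$ glues across components and extends uniquely across $U\setminus U^*$ by continuity, exhibiting $f$ as an inner M\"{o}bius curve on all of $U$.

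The main obstacle is the local-to-global patching in both directions. In the forward direction, consistency of the affine planes across overlapping conformal charts of $M$ has to be argued, ideally by tying the affine plane to the pointwise tangent data. In the converse direction, the delicate step is ensuring that the strict locus $U^*$ is dense in $U$ and that the factorization $f = y_0 + L\circ v$ produced on each component fits together into a single M\"{o}bius transformation rather than a patchwork of different ones; this ultimately rests on the analyticity (or at least strong rigidity) inherent to conformal and $k$-harmonic maps in dimension $k \geq 3$.
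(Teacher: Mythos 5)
This theorem is not proved in the paper at all --- it is stated as a citation to Ikonen--Pankka \cite{ikonen2024liouville}, with no argument given. So there is no ``paper's proof'' against which to compare; what follows is an assessment of your proposal on its own terms.

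Your overall strategy (use Proposition~\ref{prop:SmithParamSub} to translate between $\phi$-calibrated submanifolds and strict Smith immersions, then invoke Liouville's theorem) is the right high-level picture, and the forward direction (inner M\"{o}bius rigid $\Rightarrow$ conformally rigid) is essentially sound: composing a conformal chart of a conformally flat $M$ with the inclusion does produce a strict Smith immersion from a flat domain, and the affine-plane conclusion patches along $M$ by the tangent-plane consistency you mention. The genuine gap is in the converse, at the step where you write that the strict locus $U^* = \{\lambda > 0\}$ is dense ``as an analytic-type consequence of $f$ being non-constant and $k$-harmonic.'' That justification does not hold up: for $k > 2$ the $k$-Laplacian and the $k$-tension operator are \emph{degenerate} at critical points (the paper itself flags this in its notation section), $k$-harmonic maps are not real-analytic, and unique continuation for $p$-harmonic maps ($p \neq 2$) fails in general. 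So one cannot conclude density of $U^*$ from $k$-harmonicity alone, and without density the ``extends by continuity across $U \setminus U^*$'' step collapses.

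The correct way to close this gap runs through the M\"{o}bius structure itself rather than any regularity theory: once you know $f = y_0 + L \circ v$ on a connected component $V$ of $U^*$ with $v$ the restriction of a M\"{o}bius transformation of $S^k$, observe that the conformal factor $\lambda|_V = \|dv\|$ is continuous up to $\partial V \cap U$ and is bounded away from $0$ near any interior accumulation point (Möbius transformations have nowhere-vanishing differential away from the pole, and $\lambda \to \infty$ at the pole). Since $\lambda$ is globally continuous on $U$ and vanishes on $U \setminus U^*$, the only way $V$ can have boundary inside $U$ is if $\lambda$ vanishes there --- a contradiction. Hence a nonempty $U^*$ must be all of $U$, and there is nothing to glue or extend. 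Separately, you should also check which sense of ``flat'' (intrinsic Gauss-flatness versus lying in an affine $k$-plane) the cited definition of conformal rigidity uses; your argument in the forward direction silently produces the stronger, extrinsic conclusion, which is harmless, but in the converse direction you \emph{use} the extrinsic reading, so the equivalence of the two notions for calibrated (hence minimal) submanifolds needs to be on the table if only the intrinsic reading is assumed.
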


\begin{example} \label{ex:ConfRigid} The following remarks are drawn from \cite{ikonen2024liouville}.
\begin{enumerate}[(a)]
\item For a given $(k,n)$, the collection of inner M\"{o}bius rigid calibrations is an open dense subset of the set of all constant-coefficient calibrations in $\Lambda^k(\R^n)^*$.
\item The volume form $\vol_0 \in \Lambda^n(\R^n)^*$ is inner M\"{o}bius rigid, essentially by Liouville's theorem on conformal maps.
\item For $p \geq 2$, the K\"{a}hler calibrations $\frac{1}{p!}\omega^p \in \Omega^{2p}(\R^{2n})$ are inner M\"{o}bius rigid.
\item By contrast, the K\"{a}hler calibration $\omega \in \Lambda^2(\R^{2n})^*$, the special Lagrangian calibration $\mathrm{Re}(dz_1 \wedge dz_2 \wedge dz_3) \in \Lambda^3(\R^6)^*$, the associative calibration $\varphi \in \Lambda^3(\R^7)^*$, and the Cayley calibration $\Phi \in \Lambda^4(\R^8)^*$ are \emph{not} inner M\"{o}bius rigid.  (The authors do not know whether the coassociative calibration $\ast\varphi \in \Lambda^4(\R^7)^*$ is inner M\"{o}bius rigid.)
\end{enumerate}
\end{example}

\indent When $\phi \in \Lambda^k(\R^n)^*$ is inner M\"{o}bius rigid, we can completely characterize those domains $U \subset \R^n$ that are $\phi$-hyperbolic.

\begin{thm} \label{prop:ConfRigid} Let $\phi \in \Lambda^k(\R^n)^*$ be a constant-coefficient calibration, $k \geq 2$.  Let $U \subset \R^n$ be a domain, and regard $U$ as the calibrated manifold $(U, g_0, \phi)$.
\begin{enumerate}[(a)]
\item If $U$ is $\phi$-hyperbolic, then $U$ contains no affine $\phi$-planes.
\item Suppose $\phi$ is inner M\"{o}bius rigid.  Then the converse of (a) holds.
\end{enumerate}
\end{thm}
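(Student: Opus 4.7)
The plan is to handle (a) by the contrapositive, parametrizing any affine $\phi$-plane in $U$ as a Smith immersion from $\R^k$. For (b), I will compose a Smith immersion $\R^k \to U$ with the inclusion $U \hookrightarrow \R^n$, apply inner M\"{o}bius rigidity, and then use the standard fact that M\"{o}bius transformations $S^k \to S^k$ defined on all of $\R^k$ are similarities to force the image to be an affine $\phi$-plane in $U$.

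\textbf{Part (a).} Suppose $U$ contains an affine $\phi$-plane $P = p_0 + V$, where $V \subset \R^n$ is a linear $k$-plane with $\phi|_V = \vol_V$ for some orientation. Choose an orientation-preserving linear isometry $\iota_0 \colon (\R^k, g_0) \to (V, g_0|_V)$, and define $f \colon \R^k \to U$ by $f(x) = p_0 + \iota_0(x)$. Then $f^*g_0 = g_0$, and since $df_x$ has constant image equal to the $\phi$-plane $V$, we get $f^*\phi = \vol_0$. Thus $f$ is a non-constant Smith immersion, so $U$ fails to be $\phi$-hyperbolic.

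\textbf{Part (b).} Assume $\phi$ is inner M\"{o}bius rigid and $U$ contains no affine $\phi$-plane. Let $f \colon (\R^k, g_0, \vol_0) \to (U, g_0, \phi)$ be a Smith immersion. By Example \ref{example:Inclusion}(b), the inclusion $\iota \colon (U, g_0, \phi) \hookrightarrow (\R^n, g_0, \phi)$ is conformally calibrating, so by Proposition \ref{prop:Invariance}(b) the composition $\tilde f := \iota \circ f \colon \R^k \to \R^n$ is again a Smith immersion. By inner M\"{o}bius rigidity, $\tilde f$ is either constant or an inner M\"{o}bius curve
$$\tilde f(x) = y_0 + L(v(x)),$$
with $L \colon \R^k \to \R^n$ an orthogonal linear map and $v$ the restriction to $\R^k$ of some non-constant M\"{o}bius transformation $\tilde v \colon S^k \to S^k$.

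The key step is now to identify $v$. Since $\tilde f$ is defined on all of $\R^k$ with values in $\R^n$, $v$ is defined on all of $\R^k$ with values in $\R^k$, which forces $\tilde v^{-1}(\infty) = \{\infty\}$. Among M\"{o}bius transformations of $S^k$, those fixing $\infty$ are exactly the similarities, so $v(x) = aAx + b$ for some $a > 0$, $A \in \mathrm{O}(k)$, $b \in \R^k$. Consequently
$$\tilde f(x) = \bigl(y_0 + L(b)\bigr) + a\,(L \circ A)(x),$$
which is an affine parametrization whose image is an affine $k$-plane in $\R^n$. Because $\tilde f$ is a (strict, since $a > 0$) Smith immersion, this affine plane is $\phi$-calibrated by Proposition \ref{prop:SmithParamSub}. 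Since $\tilde f(\R^k) \subset U$, this contradicts the hypothesis that $U$ contains no affine $\phi$-planes. Therefore $\tilde f$ must be constant, hence $f$ is constant, proving that $U$ is $\phi$-hyperbolic. The only nontrivial point is the classification of global M\"{o}bius transformations as similarities, which is classical.
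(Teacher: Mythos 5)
Your proof is correct and takes essentially the same route as the paper: in (a) parametrize an affine $\phi$-plane by a non-constant Smith immersion; in (b) compose with the inclusion, apply inner M\"{o}bius rigidity, and conclude the image is an affine $\phi$-plane. The only difference is that the paper compresses your ``$\tilde v$ fixes $\infty$, hence is a similarity'' step into the terser observation that an entire restriction of a M\"{o}bius transformation is surjective onto $\R^k$ --- the same classical fact, just stated with less unpacking.
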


\noindent \textit{Proof:}
\begin{enumerate}[(a)]
\item Suppose there exists an affine $\phi$-plane $E \subset U$.  Then there exists a non-constant Smith immersion $(\R^k, g_0, \vol_0) \to (U, g_0, \phi)$ whose image is $E$, so $U$ is not $\phi$-hyperbolic.
\item Suppose $\phi$ is inner M\"{o}bius rigid.  Suppose that $U$ is not $\phi$-hyperbolic, so there exists a non-constant Smith immersion $F \colon (\R^k, g_0, \vol_0) \to (U, g_0, \phi)$.  Let $\iota \colon U \hookrightarrow \R^n$ be the inclusion, so that $f := \iota \circ F \colon (\R^k, g_0, \vol_0) \to (\R^n, g_0, \phi)$ is a Smith immersion.  Since $f$ is non-constant, inner M\"{o}bius rigidity implies that $f(x) = y_0 + L(v(x))$ for some $y_0 \in \R^n$, some orthogonal linear map $L \colon \R^k \to \R^n$, and some map $v \colon \R^k \to \R^k$ that is the restriction of a M\"{o}bius transformation $S^k \to S^k$.  Since $v$ is entire (defined on all of $\R^k$) and the restriction of a M\"{o}bius transformation, $v$ is surjective.  Therefore, $\mathrm{Image}(f) = y_0 + \mathrm{Image}(L \circ v)$ is an affine $k$-plane in $U$, and it is $\phi$-calibrated because $F$ is Smith.  That is, $U$ contains an affine $\phi$-plane.
\end{enumerate}

\pagebreak

\begin{cor}  ${}$
\begin{enumerate}[(a)]
\item Let $U \subset \R^{2n}$ be a domain, and let $\phi = \frac{1}{p!}\omega^p \in \Omega^{2p}(\R^{2n})$ have $p \geq 2$.  Then $(U,g_0)$ is $\frac{1}{p!}\omega^p$-hyperbolic if and only if $U$ contains no complex $p$-planes.
\item Let $U \subset \R^n$ be a domain, and let $\phi = \vol_0 \in \Omega^n(U)$.  Then $(U, g_0)$ is $\vol_0$-hyperbolic if and only if $U \neq \R^n$.
\end{enumerate}
\end{cor}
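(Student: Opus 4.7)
The plan is to deduce both parts as direct consequences of Theorem~\ref{prop:ConfRigid}, after identifying the affine $\phi$-planes in each case and invoking the appropriate inner M\"obius rigidity statement from Example~\ref{ex:ConfRigid}.

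For part (a), the calibration $\phi = \frac{1}{p!}\omega^p \in \Lambda^{2p}(\R^{2n})^*$ has degree $k = 2p \geq 4$, and by Example~\ref{ex:ConfRigid}(c) it is inner M\"obius rigid. The classical Wirtinger inequality identifies the $\frac{1}{p!}\omega^p$-calibrated $2p$-planes in $\R^{2n}$ with the positively oriented complex $p$-planes, so ``$U$ contains no affine $\frac{1}{p!}\omega^p$-planes'' is precisely the statement ``$U$ contains no complex $p$-planes.'' Applying both directions of Theorem~\ref{prop:ConfRigid} then yields the biconditional in (a).

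For part (b), the calibration is $\vol_0 \in \Lambda^n(\R^n)^*$, which is inner M\"obius rigid by Example~\ref{ex:ConfRigid}(b). Since $\vol_0$ is a top-degree calibration, an affine $\vol_0$-plane in $\R^n$ must be an affine $n$-plane on which $dx^1 \wedge \cdots \wedge dx^n$ restricts to the induced volume form, and the only such affine $n$-plane is $\R^n$ itself with the standard orientation. Hence ``$U$ contains no affine $\vol_0$-planes'' translates to $U \neq \R^n$, and Theorem~\ref{prop:ConfRigid} again produces the claimed equivalence.

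No serious obstacle arises here: both parts reduce to a single invocation of Theorem~\ref{prop:ConfRigid} once the hypothesis ``contains no affine $\phi$-planes'' is rephrased geometrically. The only prerequisites, namely inner M\"obius rigidity of $\frac{1}{p!}\omega^p$ for $p \geq 2$ and of $\vol_0$ for $n \geq 3$, are already recorded in Example~\ref{ex:ConfRigid}, so the argument is essentially mechanical.
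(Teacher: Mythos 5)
Your proof is correct and matches the paper's own approach, which simply cites Theorem~\ref{prop:ConfRigid}(b) together with Example~\ref{ex:ConfRigid}; you have merely filled in the (straightforward) identifications of the affine $\phi$-planes in each case. Your parenthetical note that inner M\"obius rigidity of $\vol_0$ is only stated for $n \geq 3$ is a fair observation about the implicit hypothesis, and is consistent with how the paper itself invokes the theorem.
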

\begin{proof} This follows from Theorem \ref{prop:ConfRigid}(b) and Example \ref{ex:ConfRigid}.
\end{proof}

\section{Schwarz lemmas and consequences}

\indent \indent Our primary goal in this section is to derive a Schwarz lemma for weakly conformal $k$-harmonic maps in general, and for Smith immersions in particular.  In $\S$\ref{sub:PhiSec}, we define a notion of ``$\phi$-sectional curvature" that extends the holomorphic sectional curvature to arbitrary calibrated geometries $(X, g_X, \phi)$.  Then, in $\S$\ref{sub:Schwarz} we derive a Bochner formula for weakly conformal $k$-harmonic maps and Smith immersions.  This formula, together with a maximum principle argument, will imply the Schwarz lemma (Theorem \ref{thm:SchwarzMain}).  Finally, in $\S$\ref{sub:Consequences}, we draw several consequences.  In particular, we prove (Theorem \ref{cor:NegativeCurvature}) that strongly negative $\phi$-sectional curvature implies $\phi$-hyperbolicity.

\subsection{$\phi$-Sectional curvature} \label{sub:PhiSec}

\begin{defn} Let $(X, g_X, \phi)$ be a semi-calibrated $n$-manifold, where $\phi \in \Omega^k(X)$ with $k \geq 2$.  The \emph{$\phi$-sectional curvature} of a $\phi$-plane $\xi \in \mathrm{Gr}(\phi)$ is
$$\mathrm{Sec}^\phi(\xi) := \sum_{1 \leq i,j \leq k} \mathrm{Sec}(e_i \wedge e_j) = \sum_{1 \leq i,j \leq k} R_{ijji}$$
where $\{e_1, \ldots, e_k\}$ is an orthonormal basis of $\xi$.  One can check that this definition is well-defined (independent of basis), and that it is independent of the orientation of $\xi$.
\end{defn}

\begin{example} \label{example:Sectional} ${}$
\begin{itemize}
\item Let $\phi = \vol_X \in \Omega^n(X)$ be the volume form.  Then $\mathrm{Sec}^{\vol_X}$ is the scalar curvature of $X$.
\item Let $\phi = \omega \in \Omega^2(X)$ be a K\"{a}hler form on a K\"{a}hler manifold $X$.  Then $\mathrm{Sec}^{\omega}(\xi)$ is the holomorphic sectional curvature of a complex line $\xi \subset TX$.
\item Let $\phi = \Psi \in \Omega^4(X)$ be a QK $4$-form on a quaternionic-K\"{a}hler manifold $X$.  Then $\mathrm{Sec}^\Psi(\xi)$ extends the notion of ``$Q$-sectional curvature" \cite{ishihara1973quaternion}. \\

To explain this, for $u \in T_xX$, we let $Q(u) \subset T_xX$ be the quaternionic line spanned by $u$.  If $\mathrm{Sec}(v_1 \wedge v_2)$ is constant on non-zero elements $v_1 \wedge v_2 \in \Lambda^2(Q(u)) \subset \Lambda^2(T_xX)$, then $\rho(u) := \mathrm{Sec}(v_1 \wedge v_2)$ is called the $Q$-sectional curvature of $u$, and one can calculate that $\mathrm{Sec}^\Psi(Q(u)) = 12\,\rho(u)$.  However, if $\mathrm{Sec}(v_1 \wedge v_2)$ depends on the choice of non-zero $v_1 \wedge v_2 \in \Lambda^2(Q(u))$, then the $Q$-sectional curvature of $u$ is undefined (whereas $\mathrm{Sec}^\Psi(Q(u))$ still makes sense). \\

Note that $\mathrm{Sec}^\Psi$ is not the notion of ``quaternionic sectional curvature" considered by Moroianu-Semmelmann-Weingart \cite{moroianu2024quaternion}, nor is it the ``quaternionic bisectional curvature" considered by Macia-Semmelmann-Weingart \cite{macia2025quaternionic}.
\end{itemize}
\end{example}

\indent We now make two remarks about this definition.  First, when the ambient metric $g_X$ is Einstein (as happens in special holonomy settings), there is a simple relationship between the $\phi$- and $(\ast \phi)$-sectional curvature.

\begin{prop} Let $(X^n, g_X, \phi)$ be a semi-calibrated $n$-manifold, $\phi \in \Omega^k(X)$.  If $\mathrm{Ric}(g_X) = c g_X$ for some constant $c \in \R$, then
$$\mathrm{Sec}^\phi(\xi) = (2k-n)c + \mathrm{Sec}^{\ast \phi}(\ast \xi), \ \ \forall \xi \in \mathrm{Gr}(\phi).$$
In particular, if $g_X$ is Ricci-flat, then $\mathrm{Sec}^\phi(\xi) = \mathrm{Sec}^{\ast \phi}(\ast \xi)$ holds for all $\phi$-planes $\xi$.
\end{prop}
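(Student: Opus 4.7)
The plan is to work in an orthonormal frame adapted to the orthogonal splitting $T_xX = \xi \oplus \xi^\perp$, and then exploit the Einstein condition by summing the diagonal Ricci entries over the two factors separately.

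Concretely, I would choose an orthonormal basis $\{e_1, \ldots, e_k\}$ of $\xi$ and extend it by $\{e_{k+1}, \ldots, e_n\}$, an orthonormal basis of $\xi^\perp$; then $*\xi$ coincides with $\xi^\perp$ as a subspace (the orientation issue is harmless, since $\mathrm{Sec}^{*\phi}$ is independent of orientation, and $*\xi$ is automatically a $(*\phi)$-plane because $(*\phi)(*\xi) = \phi(\xi) = 1$ when $\xi \wedge *\xi$ is the positive volume). Using the paper's convention $R_{ii} = \sum_{j} R_{ijji}$ (which follows from $R_{kiik} = R_{ikki}$ via the pair-swap symmetry), split the Ricci trace over $\xi$ into intrinsic and mixed contributions:
\[
\sum_{i=1}^k R_{ii} = \sum_{1 \leq i,j \leq k} R_{ijji} + \sum_{i=1}^k \sum_{j=k+1}^n R_{ijji} = \mathrm{Sec}^\phi(\xi) + M,
\]
where $M := \sum_{i=1}^k \sum_{j=k+1}^n R_{ijji}$. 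The analogous splitting over $\xi^\perp$ produces
\[
\sum_{i=k+1}^n R_{ii} = \mathrm{Sec}^{*\phi}(*\xi) + M',
\]
with $M' := \sum_{i=k+1}^n \sum_{j=1}^k R_{ijji}$. The key observation is that $M = M'$: the pair-swap symmetry $R_{ijji} = R_{jiij}$, together with a relabeling of indices, interchanges the roles of the two index ranges.

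Now the Einstein hypothesis $\mathrm{Ric} = c g_X$ gives $R_{ii} = c$ for every $i$, so the left-hand sides of the two displays above become $kc$ and $(n-k)c$, respectively. Subtracting eliminates the mixed term $M$ and yields
\[
(2k-n)c = \mathrm{Sec}^\phi(\xi) - \mathrm{Sec}^{*\phi}(*\xi),
\]
which is the stated identity. The Ricci-flat case $c = 0$ is then immediate. The main (and essentially only) obstacle is pure bookkeeping: carefully matching the paper's sign conventions for the Ricci contraction, and verifying that the pair-swap Riemann symmetry produces the needed equality $M = M'$. There is no deeper difficulty — the argument is algebraic and requires nothing about $\phi$ beyond its being a semi-calibration whose Hodge dual evaluates to $1$ on $*\xi$.
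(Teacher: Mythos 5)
Your argument is correct, and it is essentially the paper's own proof, just reorganized: the paper chains the Einstein substitution twice (first eliminating the sum over $\xi$-indices in favor of $\ast\xi$-indices, then again to land on $\mathrm{Sec}^{\ast\phi}$), whereas you write the two Ricci traces $\sum_{i\le k}R_{ii}=\mathrm{Sec}^\phi(\xi)+M$ and $\sum_{i>k}R_{ii}=\mathrm{Sec}^{\ast\phi}(\ast\xi)+M'$ separately, note $M=M'$ by pair-swap symmetry, and subtract. Same frame, same decomposition, same ingredients.
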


\begin{proof} Let $\{e_1, \ldots, e_n\}$ be an orthonormal frame such that $\xi = \mathrm{span}\{e_1, \ldots, e_k\}$ and also $\ast \xi = \mathrm{span}\{e_{k+1}, \ldots, e_n\}$.  We adopt index ranges $1 \leq i,j \leq k$ and $k+1 \leq \overline{i}, \overline{j} \leq n$ and $1 \leq A,B,C \leq n$.  In particular,
\begin{align*}
\mathrm{Sec}^\phi(\xi) & = \sum_{i,j} R_{ijji}, & \mathrm{Sec}^{\ast \phi}(\ast \xi) & = \sum_{\bar{i},\bar{j}} R_{\bar{i} \bar{j} \bar{j} \bar{i}}, & \displaystyle \operatorname{Ric}_{AB} & = \sum_C R_{CABC}\ (=c\delta_{AB}).
\end{align*}
The last of these implies $\displaystyle \sum_i R_{iABi} = c \delta_{AB}- \sum_{\bar{i}} R_{\bar{i} AB \bar{i}}$. Hence,
\begin{align*}
    \mathrm{Sec}^\phi(\xi) & = \sum_{i,j} R_{ijji} = \sum_j c\delta_{jj}- \sum_{\bar{i},j} R_{\bar{i} j j\bar{i}} = kc - \sum_{\bar{i},j} R_{j \bar{i}\bar{i} j} = kc -\Big(c\sum_{\bar{i}} \delta_{\bar{i} \bar{i}} - \sum_{\bar{i},\bar{j}} R_{\bar{j} \bar{i} \bar{i} \bar{j}} \Big)\\
    &= kc - (n-k)c + \mathrm{Sec}^{\ast \phi}(\ast \xi) = (2k-n)c + \mathrm{Sec}^{\ast \phi}(\ast \xi).
\end{align*}
\end{proof}

Next, we observe that in certain cases of interest, a sign on the $\phi$-sectional curvature implies a sign on the scalar curvature.  For example:

\begin{prop} ${}$
\begin{enumerate}[(a)]
\item Let $(X^7, g_X, \varphi)$ be a $7$-manifold with a $\G_2$-structure.
\begin{itemize}
\item If $\mathrm{Sec}^\varphi_X > 0$ (resp., $\geq 0, < 0, \leq 0$), then $\mathrm{Scal}_X > 0$ (resp., $\geq 0, < 0, \leq 0$).
\item If $\mathrm{Sec}^{\ast \varphi}_X > 0$ (resp., $\geq 0, < 0, \leq 0$), then $\mathrm{Scal}_X > 0$ (resp., $\geq 0, < 0, \leq 0$).
\end{itemize}
\item Let $(X^8, g_X, \Phi)$ be an $8$-manifold with a $\Spin(7)$-structure.  If $\mathrm{Sec}^\Phi_X > 0$ (resp., $\geq 0, < 0, \leq 0$), then $\mathrm{Scal}_X > 0$ (resp., $\geq 0, < 0, \leq 0$).
\end{enumerate}
\end{prop}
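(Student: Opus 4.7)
The strategy is pointwise: at each $x$, I will exhibit a finite, explicit collection of calibrated planes built from a single orthonormal frame adapted to the structure, together with an identity expressing $\mathrm{Scal}_X(x)$ as a sum with positive coefficients of $\mathrm{Sec}^\phi$ evaluated on those planes. Given such an identity, each hypothesis on $\mathrm{Sec}^\phi$ ($>0$, $\geq 0$, $<0$, or $\leq 0$) transfers termwise to the sum, yielding the corresponding inequality on $\mathrm{Scal}_X$.

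For $(X^7, \varphi)$: choose an orthonormal frame $\{e_1, \ldots, e_7\}$ at $x$ in which
$$\varphi|_x = e^{123} + e^{145} + e^{167} + e^{246} - e^{257} - e^{347} - e^{356}.$$
The seven associative $3$-planes $\xi_i$ spanned by the index triples $\{1,2,3\},\{1,4,5\},\{1,6,7\},\{2,4,6\},\{2,5,7\},\{3,4,7\},\{3,5,6\}$ form a Steiner triple system on $\{1,\ldots,7\}$, namely the seven lines of the Fano plane. In particular, each unordered pair $\{A,B\}$ lies in exactly one $\xi_i$. Combining $\mathrm{Sec}^\varphi(\xi)=2\sum_{a<b,\,a,b\in\xi}\mathrm{Sec}(e_a\wedge e_b)$ with $\mathrm{Scal}_X(x)=2\sum_{A<B}\mathrm{Sec}(e_A\wedge e_B)$ yields
$$\sum_{i=1}^{7}\mathrm{Sec}^\varphi(\xi_i) \,=\, \mathrm{Scal}_X(x).$$
For $\ast\varphi$: the complementary $4$-planes $\xi_i^\perp$ are coassociative. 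A Fano count gives that each pair $\{A,B\}$ is disjoint from exactly two Fano lines, hence lies in exactly two of the $\xi_i^\perp$, producing
$$\sum_{i=1}^{7}\mathrm{Sec}^{\ast\varphi}(\xi_i^\perp) \,=\, 2\,\mathrm{Scal}_X(x).$$

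For $(X^8,\Phi)$: choose an orthonormal frame $\{e_0, e_1, \ldots, e_7\}$ at $x$ in which $\Phi|_x = e^0 \wedge \varphi + \ast\varphi$, where $\varphi$ is the standard $\G_2$ form above on $V := \mathrm{span}(e_1,\ldots,e_7)$ and $\ast$ denotes the Hodge star on $V$. The monomial Cayley $4$-planes are then the $14$ planes of the form $\mathrm{span}(e_0)\oplus L$ (for $L$ a Fano line in $V$) together with the seven orthogonal complements $L^\perp \subset V$. A direct inclusion–exclusion with the Fano structure shows each pair in $\{e_0, e_1, \ldots, e_7\}$ lies in exactly three of these $14$ planes, so
$$\sum_{i=1}^{14}\mathrm{Sec}^\Phi(\xi_i) \,=\, 3\,\mathrm{Scal}_X(x).$$

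The central ingredient, and the step requiring the most care, is the Fano/Steiner combinatorics of the monomial calibrated planes of an adapted frame, together with the pair-counting on the associated block designs; this is essentially classical but must be verified for each calibration. Once the three identities above are in place, all coefficients are strictly positive integers, so every sign hypothesis on $\mathrm{Sec}^\phi$ (strict or non-strict, positive or negative) transfers termwise to $\mathrm{Scal}_X$, uniformly handling all eight implications stated in the proposition.
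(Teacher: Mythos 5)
Your proof is correct and follows essentially the same strategy as the paper: fix a $\G_2$-adapted (resp.\ $\Spin(7)$-adapted) frame, sum $\mathrm{Sec}^\phi$ over the monomial calibrated planes, and obtain $\mathrm{Scal}_X$, $2\,\mathrm{Scal}_X$, and $3\,\mathrm{Scal}_X$ respectively.  The paper states these identities without elaboration; your Fano-plane / Steiner-triple-system counting is simply the justification the paper leaves implicit, and your pair-counts ($1$, $2$, and $3$) are all correct.
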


\begin{proof} We prove part (a).  Fix $p \in X$, and let $(e_1, \ldots, e_7)$ be $\mathrm{G}_2$-adapted frame at $p$, so that $\varphi|_p = e^{123} + e^{145} + e^{167} + e^{246} - e^{257} - e^{347} - e^{356}$.  Then at $p \in X$, we have
$$\mathrm{Sec}^\varphi(e^{123}) + \mathrm{Sec}^\varphi(e^{145}) + \cdots + \mathrm{Sec}^\varphi(-e^{356}) = \sum_{i,j} R_{ijji} = \mathrm{Scal}_X(p),$$
which gives the result. In the case of $\ast \varphi$, the same procedure yields $2\,\mathrm{Scal}_X(p)$ on the right-hand side.  In the $\Spin(7)$ case, one obtains $3\,\mathrm{Scal}_X(p)$ on the right.
\end{proof}

\subsection{A Schwarz lemma for Smith immersions} \label{sub:Schwarz}

\indent \indent We now aim to establish a suitable Bochner formula.  For this, let $f \colon (\Sigma^k, g_\Sigma) \to (X^n, g_X)$ be an arbitrary smooth map between Riemannian manifolds.  The following formula is well-known:
\begin{equation}
\frac{1}{2}\Delta |df|^2 = \left| \nabla df \right|^2 + \langle \nabla \tau_2(f), df \rangle + R^{\Sigma, X}(f), \label{eq:ClassicBochner}
\end{equation}
where here
\begin{align*}
R^{\Sigma, X}(f) & = \sum_i \langle df(\mathrm{Ric}_\Sigma e_i), df(e_i) \rangle - \sum_{i,j} R^X( df(e_i), df(e_j), df(e_j), df(e_i)),
\end{align*}
and $\{e_1, \ldots, e_k\}$ is a local orthonormal frame of $T\Sigma$.  When $f$ is weakly conformal, we can simplify the curvature term in the following way:

\begin{lem} \label{prop:Classic-Bochner} If $f \colon (\Sigma^k, g_\Sigma) \to (X^n, g_X)$ is a weakly conformal map, then
\begin{equation} \label{eq:ConformalCurvature}
R^{\Sigma, X}(f) = \frac{|df|^2}{k}\mathrm{Scal}_\Sigma - \frac{|df|^4}{k^2} \sum_{i,j} \mathrm{Sec}_X( v_i \wedge v_j),
\end{equation}
where $\{v_i\}$ is a local orthonormal frame for $df(T\Sigma) \subset f^*TX$.
\end{lem}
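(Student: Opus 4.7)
The plan is a direct unpacking of weak conformality at points where $df$ is non-zero, followed by a continuity/triviality remark at points where it vanishes.

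First I would fix a point $p \in \Sigma$ with $df_p \neq 0$, choose a local orthonormal frame $\{e_1,\ldots,e_k\}$ of $T\Sigma$ near $p$, and set $\lambda = \tfrac{1}{\sqrt{k}}|df|$, which is the conformal factor by equation (\ref{eq:Lambda}). Weak conformality says $\langle df(u),df(w)\rangle_{g_X} = \lambda^2 g_\Sigma(u,w)$, so the vectors $v_i := \lambda^{-1}df(e_i)$ form a local $g_X$-orthonormal frame of $df(T\Sigma) \subset f^*TX$, and $df(e_i) = \lambda v_i$.

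Next I would rewrite the two pieces of $R^{\Sigma,X}(f)$. For the Ricci piece, weak conformality gives
\[
\sum_i \langle df(\mathrm{Ric}_\Sigma e_i),\, df(e_i)\rangle_{g_X} = \lambda^2 \sum_i g_\Sigma(\mathrm{Ric}_\Sigma e_i, e_i) = \lambda^2\,\mathrm{Scal}_\Sigma = \frac{|df|^2}{k}\,\mathrm{Scal}_\Sigma.
\]
For the sectional piece, the multilinearity of $R^X$ and $df(e_i) = \lambda v_i$ yield
\[
\sum_{i,j} R^X(df(e_i),df(e_j),df(e_j),df(e_i)) = \lambda^4 \sum_{i,j} R^X(v_i,v_j,v_j,v_i) = \frac{|df|^4}{k^2}\sum_{i,j}\mathrm{Sec}_X(v_i\wedge v_j).
\]
Subtracting gives (\ref{eq:ConformalCurvature}) at every point where $df \neq 0$.

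Finally, I would dispatch the degenerate locus $\{df = 0\}$. At such points the frame $\{v_i\}$ is not defined, but this is harmless: both terms of $R^{\Sigma,X}(f)$ vanish because $df = 0$ makes each summand zero, and the right-hand side of (\ref{eq:ConformalCurvature}) vanishes because of the explicit $|df|^2$ and $|df|^4$ factors. So the equality holds trivially there (with the convention that the sum over $v_i$ is interpreted as $0$ when $df = 0$). There is no real obstacle here; the only subtlety is the cosmetic one of making sense of the frame $\{v_i\}$ on the degenerate locus, which is handled by noting that the right-hand side depends only on $df$ (through $\lambda^4\sum R^X(v_i,v_j,v_j,v_i) = \sum R^X(df(e_i),df(e_j),df(e_j),df(e_i))$) and this expression is smooth and vanishes whenever $df$ does.
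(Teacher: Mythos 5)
Your proof is correct and takes essentially the same route as the paper: use weak conformality to identify $df(e_i) = \lambda v_i$ with $\lambda = |df|/\sqrt{k}$, simplify the Ricci term via $f^*g_X = \lambda^2 g_\Sigma$, and pull $\lambda^4$ out of the curvature term by multilinearity. The only (helpful, optional) difference is your explicit treatment of the degenerate locus $\{df = 0\}$, which the paper leaves implicit here.
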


\begin{proof} Since $f$ is weakly conformal, the first term of $R^{\Sigma, X}(f)$ is
\begin{align*}
 \sum_i \langle df(\mathrm{Ric}_\Sigma e_i), df(e_i) \rangle = \sum_{i} (f^*g_X)(\mathrm{Ric}_\Sigma e_i, e_i) & = \frac{|df|^2}{k} \sum_i \mathrm{Ric}_\Sigma(e_i, e_i) = \frac{|df|^2}{k}\,\mathrm{Scal}_\Sigma.
 \end{align*}
 For the second term, we may take $v_i = \frac{1}{\lambda} df(e_i)$ as the orthonormal frame for $df(T\Sigma) \subset f^*TX$, where $\lambda = \frac{1}{\sqrt{k}}|df|$ is the conformal factor.  Then
 \begin{align*}
\sum_{i,j} R^X( df(e_i), df(e_j), df(e_j), df(e_i)) & = \sum_{i,j} \left| df(e_i) \wedge df(e_j) \right|^2 \mathrm{Sec}_X(df(e_i) \wedge df(e_j)) \\
& = \lambda^4 \sum_{i,j} \mathrm{Sec}_X(v_i \wedge v_j) \\
& = \frac{|df|^4}{k^2}\sum_{i,j} \mathrm{Sec}_X(v_i \wedge v_j).
\end{align*}
\end{proof}

Specializing further to weakly conformal $k$-harmonic maps, we will prove:

\begin{prop}[Bochner formula] \label{prop:Bochner} Let $f \colon (\Sigma^k, g_\Sigma, \vol_\Sigma) \to (X^n, g_X, \phi)$ be a weakly conformal, $k$-harmonic map, where $\phi \in \Omega^k(X)$ is a calibration.  Then
\begin{equation}
\frac{k-1}{k^2} \Delta |df|^{2k} = |df|^{2k-2} |\nabla df|^2 + \frac{3k-4}{4} \left| df \right|^{2k-4} \left| \nabla |df|^2 \right|^2 + \frac{|df|^{2k}}{k}\,\mathrm{Scal}_\Sigma - \frac{|df|^{2k+2}}{k^2} \sum_{i,j} \mathrm{Sec}_X(v_i \wedge v_j),
\end{equation}
where $\{v_i\}$ is a local orthonormal frame for $df(T\Sigma) \subset f^*TX$.  Moreover, if $f$ is a Smith immersion, then the last sum is $\sum_{i,j} \mathrm{Sec}_X(v_i \wedge v_j) = \mathrm{Sec}^\phi_X( df(T\Sigma))$.
\end{prop}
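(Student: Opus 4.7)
The plan is to derive the formula by combining the classical Bochner identity with the $k$-harmonicity condition of $f$. Starting from equation (\ref{eq:ClassicBochner}):
\[
\tfrac{1}{2}\Delta|df|^2 = |\nabla df|^2 + \langle \nabla \tau_2(f), df\rangle + R^{\Sigma,X}(f),
\]
the $R^{\Sigma,X}(f)$ term is already rewritten in the desired form by Lemma \ref{prop:Classic-Bochner}. I would use the $k$-tension to $2$-tension relation (\ref{kTension2Tension}) together with $\tau_k(f)=0$ to solve (on $\{|df|>0\}$) for
\[
\tau_2(f) \;=\; -\tfrac{k-2}{2}\,|df|^{-2}\,df(\nabla|df|^2).
\]
Setting $u:=|df|^2$, the task reduces to evaluating $\langle \nabla \tau_2(f),df\rangle$ and then converting the resulting expression for $\Delta u$ into one for $\Delta u^k$.

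Next, I would fix a point $p$ and a local orthonormal frame $\{e_i\}$ on $\Sigma$ with $\nabla_{e_j}e_k\!\mid_p=0$, and expand
\[
\nabla_{e_i}\tau_2(f) \;=\; -\tfrac{k-2}{2}\!\left[-u^{-2}(\nabla_{e_i}u)\,df(\nabla u) + u^{-1}\!\bigl((\nabla df)(e_i,\nabla u)+df(\nabla_{e_i}\nabla u)\bigr)\right]\!.
\]
Taking the inner product with $df(e_i)$ and summing, weak conformality yields the key simplification $\langle df(\nabla u),df(e_i)\rangle = (f^{*}g_X)(\nabla u,e_i) = \tfrac{u}{k}\nabla_{e_i}u$, which collapses the first and third terms into expressions proportional to $|\nabla u|^2/u$ and $\Delta u$ respectively. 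The remaining middle term $\sum_i\langle(\nabla df)(e_i,\nabla u),df(e_i)\rangle$ is handled via the symmetry of $\nabla df$ together with the elementary identity (obtained by differentiating $u=\sum_k\langle df(e_k),df(e_k)\rangle$)
\[
\nabla_{e_j}u \;=\; 2\sum_k \langle(\nabla df)(e_j,e_k),\,df(e_k)\rangle,
\]
which collapses it to $\tfrac{1}{2}|\nabla u|^2$. Combining these, I expect
\[
\langle \nabla \tau_2(f),df\rangle \;=\; -\tfrac{(k-2)^2}{4k}\,\frac{|\nabla u|^2}{u} \;-\; \tfrac{k-2}{2k}\,\Delta u.
\]

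Substituting into the Bochner identity and solving algebraically produces
\[
\tfrac{k-1}{k}\,\Delta u \;=\; |\nabla df|^2 \;-\; \tfrac{(k-2)^2}{4k}\,\frac{|\nabla u|^2}{u} \;+\; R^{\Sigma,X}(f).
\]
The chain rule $\Delta u^k = k u^{k-1}\Delta u + k(k-1) u^{k-2}|\nabla u|^2$ then gives
\[
\tfrac{k-1}{k^2}\Delta u^k \;=\; u^{k-1}|\nabla df|^2 \;+\; \left(\tfrac{(k-1)^2}{k}-\tfrac{(k-2)^2}{4k}\right)u^{k-2}|\nabla u|^2 \;+\; u^{k-1}R^{\Sigma,X}(f).
\]
The coefficient simplification $\tfrac{(k-1)^2}{k}-\tfrac{(k-2)^2}{4k} = \tfrac{4(k-1)^2-(k-2)^2}{4k}=\tfrac{3k-4}{4}$ matches the claim. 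Finally, applying Lemma \ref{prop:Classic-Bochner} to $R^{\Sigma,X}(f)$ gives the Scal$_\Sigma$ and $\sum_{i,j}\mathrm{Sec}_X(v_i\wedge v_j)$ terms. When $f$ is a Smith immersion, $df(T\Sigma)$ is a $\phi$-plane at every strict point, so by the very definition of $\phi$-sectional curvature, $\sum_{i,j}\mathrm{Sec}_X(v_i\wedge v_j)=\mathrm{Sec}_X^\phi(df(T\Sigma))$.

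The main technical obstacle is the careful bookkeeping in computing $\langle \nabla \tau_2(f),df\rangle$, particularly in identifying that the mixed term involving $(\nabla df)(e_i,\nabla u)$ contributes exactly $\tfrac{1}{2}|\nabla u|^2$ via the symmetry of $\nabla df$, and in verifying the algebraic identity $\tfrac{(k-1)^2}{k}-\tfrac{(k-2)^2}{4k}=\tfrac{3k-4}{4}$ that produces the advertised coefficient. A secondary subtlety is that several intermediate quantities contain factors of $u^{-1}$ and therefore are ostensibly singular at critical points of $f$; however, because the final identity involves only manifestly smooth quantities, one can carry out the computation on the open set $\{|df|>0\}$ and extend by continuity, or equivalently, multiply through by appropriate powers of $u$ before dividing so that the argument holds globally.
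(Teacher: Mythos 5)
Your proposal is correct and reaches the stated formula by a genuinely different route than the paper.  The paper first passes to $\Delta|df|^{2k}$ via two chain-rule identities (their (\ref{eq:Chain1})--(\ref{eq:Chain2})), rewrites the troublesome term $\langle\nabla\tau_2(f), |df|^{2k-2}df\rangle$ as a \emph{divergence} by exploiting the $k$-harmonicity in the form $\mathrm{div}(|df|^{k-2}df)=0$, and then uses the conformality relation $f^\alpha_i f^\alpha_j = \tfrac{1}{k}|df|^2\delta_{ij}$ to reduce it to $-\tfrac{k-2}{k^2}\Delta|df|^k$.  You instead stay with $\Delta|df|^2$, expand $\nabla\tau_2(f)$ directly in a normal frame, and use two pointwise identities --- weak conformality giving $\langle df(\nabla u),df(e_i)\rangle=\tfrac{u}{k}\nabla_{e_i}u$, and $\nabla_{e_j}u=2\sum_k\langle(\nabla df)(e_j,e_k),df(e_k)\rangle$ with symmetry of $\nabla df$ --- to obtain $\langle\nabla\tau_2(f),df\rangle=-\tfrac{(k-2)^2}{4k}\tfrac{|\nabla u|^2}{u}-\tfrac{k-2}{2k}\Delta u$, only afterwards applying a single chain rule $\Delta u^k=ku^{k-1}\Delta u+k(k-1)u^{k-2}|\nabla u|^2$.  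I verified your intermediate identity and the coefficient computation $\tfrac{(k-1)^2}{k}-\tfrac{(k-2)^2}{4k}=\tfrac{3k-4}{4}$; both are correct, and in fact your derivation agrees with the coefficient in the statement, whereas the paper's final displayed line before applying Lemma \ref{prop:Classic-Bochner} contains an apparent typo ($\tfrac{3k-4}{2}$ instead of $\tfrac{3k-4}{4}$).  Your route is the more elementary and hands-on one; the paper's divergence trick is slicker and avoids the explicit frame bookkeeping, but requires recognizing $k$-harmonicity as a conservation law.  Regarding the singular set: your ``restrict to $\{|df|>0\}$ and extend by continuity'' remark is essentially sound (any point outside the closure of the regular set lies in an open set where $f$ is constant, and the identity there reads $0=0$), but the paper's handling --- observing directly from (\ref{eq:BochnerPrelim}) that $\Delta|df|^{2k}=0$ at critical points, using $\nabla|df|^2=0$ at interior minima of $|df|^2$ --- is cleaner and worth adopting.
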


Before beginning the proof, we recall the following standard formula: for any smooth map $h \colon (M^m, g_M) \to (N^n, g_N)$ between Riemannian manifolds, and for any $k \geq 2$, we have
\begin{equation} \label{kTension2Tension}
\tau_k(h) = |dh|^{k-2} \tau_2(h) + \frac{k-2}{2} |dh|^{k-4} \langle dh, \nabla |dh|^2 \rangle.
\end{equation}

\begin{proof} Let $f \colon (\Sigma^k, g_\Sigma, \vol_\Sigma) \to (X^n, g_X, \phi)$ be a weakly conformal, $k$-harmonic map.  Applying the chain rule to $(|df|^2)^k$ and to $(|df|^k)^2$, we have
\begin{align}
\frac{1}{2k} \Delta |df|^{2k} & = \frac{1}{2} |df|^{2k-2} \Delta |df|^2 + \frac{k-1}{2} |df|^{2k-4} \left| \nabla |df|^2 \right|^2 \label{eq:Chain1} \\
\frac{1}{2} \Delta |df|^{2k} & = |df|^k \Delta |df|^k + \frac{k^2}{4} |df|^{2k-4} \left| \nabla |df|^2 \right|^2. \label{eq:Chain2}
\end{align}
Substituting the Bochner formula (\ref{eq:ClassicBochner}) into equation (\ref{eq:Chain1}), we obtain
\begin{equation}
\frac{1}{2k} \Delta |df|^{2k} = \langle \nabla \tau_2(f), |df|^{2k-2}df \rangle + |df|^{2k-2} | \nabla df |^2 + \frac{k-1}{2}|df|^{2k-4} \left| \nabla |df|^2 \right|^2 + |df|^{2k-2} R^{\Sigma, X}(f).
\label{eq:BochnerPrelim}
\end{equation}
Now, if $p \in \Sigma$ is a critical point of $f$ (so that $df|_p = 0$), equation (\ref{eq:BochnerPrelim}) implies that $(\Delta |df|^{2k})(p) = 0$.  Therefore, the equation we are trying to prove is trivially true at critical points.  So, from now on, we work on the open set $\mathrm{Reg}(f) = \{p \in \Sigma \colon \mathrm{rank}(df_p) = k\}$.  Note that $|df|$ is smooth on $\mathrm{Reg}(f)$. \\
\indent We will calculate the first term in (\ref{eq:BochnerPrelim}).  For this, let $(e_i)$ and $(v_\alpha)$ be local orthonormal frames on $\Sigma$ and $X$, respectively, and write $df(e_i) = f^\alpha_i v_\alpha$.  Since $f$ is weakly conformal, we have
\begin{equation} \label{eq:ConformalCoordinates}
f^\alpha_i f^\alpha_j = \lambda^2 \delta_{ij} = \frac{1}{k}|df|^2 \delta_{ij}.
\end{equation}
Also, since $f$ is $k$-harmonic, equation (\ref{kTension2Tension}) gives
\begin{equation}
|df|^{k-2} \tau_2(f) = -\frac{k-2}{2} |df|^{k-4} \langle df, \nabla |df|^2 \rangle.
\label{eq:2tension}
\end{equation}
By definition, $k$-harmonic maps satisfy $\mathrm{div}( |df|^{k-2} df) = 0$.  Therefore, applying the Leibniz rule to this equation, we calculate
\begin{align*}
\langle \nabla \tau_2(f), |df|^{k-2} df \rangle & = \mathrm{div}(   |df|^{k-2}  \langle\tau_2(f), df \rangle) \\
& = -\frac{k-2}{2}\,\mathrm{div}\!\left( |df|^{k-4} \left\langle \langle df, \nabla |df|^2 \rangle, df \right\rangle \right) \tag{\text{by }(\ref{eq:2tension})} \\
& = -\frac{k-2}{2} \nabla_j \!\left( |df|^{k-4} f^\alpha_i f^\alpha_j \nabla_i |df|^2  \right)  \\
& = -\frac{k-2}{2k} \nabla_i \!\left( |df|^{k-2} \nabla_i |df|^2 \right) \tag{\text{by } (\ref{eq:ConformalCoordinates})}  \\
& = -\frac{k-2}{k^2} \nabla_i \nabla_i (|df|^2)^{k/2}  \\
& = -\frac{k-2}{k^2} \Delta |df|^k.
\end{align*}
Therefore, the first term in (\ref{eq:BochnerPrelim}) is
\begin{align}
\langle \nabla \tau_2(f), |df|^{2k-2}df \rangle & = -\frac{k-2}{k^2} |df|^k \Delta |df|^k \notag \\
& = -\frac{k-2}{k^2} \left( \frac{1}{2} \Delta |df|^{2k} - \frac{k^2}{4} |df|^{2k-4} \left| \nabla |df|^2 \right|^2  \right) \tag{\text{by }(\ref{eq:Chain2})} \\
& = -\frac{k-2}{2k^2} \Delta |df|^{2k} +  \frac{k-2}{4}  |df|^{2k-4} \left| \nabla |df|^2 \right|^2. \label{eq:FirstTerm}
\end{align}
Finally, substituting (\ref{eq:FirstTerm}) into (\ref{eq:BochnerPrelim}) and rearranging gives
$$\frac{k-1}{k^2} \Delta |df|^{2k} = |df|^{2k-2} | \nabla df |^2 + \frac{3k-4}{4}|df|^{2k-4} \left| \nabla |df|^2 \right|^2 + |df|^{2k-2} R^{\Sigma, X}(f).$$
Applying (\ref{eq:ConformalCurvature}) gives the result.
\end{proof}

\indent Now, when $\Sigma$ is compact, the above Bochner formula quickly implies the Schwarz lemma.  Indeed, the maximum principle implies that a point where $|df|^{2k}$ attains its maximum value, we have $\Delta |df|^{2k} \leq 0$.  Consequently, we obtain
$$ 0 \geq \frac{k-1}{k^2}  \Delta|df|^{2k} \geq |df|^{2k-2} R^{\Sigma, X}(f) = |df|^{2k}\Big( \frac{1}{k}\mathrm{Scal}_\Sigma - \frac{|df|^2}{k^2} \sum_{i,j} \mathrm{Sec}_X( v_i \wedge v_j) \Big).$$
Imposing appropriate curvature bounds then yields an upper bound on $|df|^2$. \\
\indent On the other hand, when $\Sigma$ is non-compact, the function $|df|^2$ need not be bounded \emph{a priori}, or the supremum need not be achieved.  However, by an application of Yau's maximum principle, we are able to extend this argument to complete manifolds $(\Sigma, g_\Sigma)$ satisfying a lower Ricci curvature bound.  We now explain this.

\begin{lem}[Yau's maximum principle \cite{yau1975harmonic}] \label{lem:YauMax} Let $(\Sigma, g_\Sigma)$ be a complete Riemannian manifold with $\mathrm{Ric}_\Sigma \geq -E$ for some constant $E > 0$.  If $v \in C^\infty(\Sigma)$ is bounded below, then for any $\epsilon > 0$, there exists a point $p_\epsilon \in \Sigma$ such that $|(\nabla v)(p_\epsilon)| < \epsilon$, $(\Delta v)(p_\epsilon) > -\epsilon$, and $v(p_\epsilon) < \inf(v) + \epsilon$.
\end{lem}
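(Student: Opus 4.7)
The approach is the classical Omori--Yau argument via penalization. If $v_* := \inf v$ is attained at some point $p_0 \in \Sigma$, then $\nabla v(p_0) = 0$ and $\Delta v(p_0) \geq 0$, and the conclusion holds trivially. Otherwise, one penalizes $v$ by a proper barrier so that the penalized function does attain its minimum, and the Ricci lower bound, via the Laplacian comparison theorem, converts the first- and second-order conditions at that minimum into the desired estimates on $\nabla v$ and $\Delta v$.

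Concretely, fix $\epsilon > 0$ and pick $x_0 \in \Sigma$ with $v(x_0) < v_* + \epsilon/2$. Set $r(x) = \mathrm{dist}_\Sigma(x,x_0)$, $\Phi(r) = \sqrt{1+r^2}$, and for a small parameter $\delta > 0$ define
$$w_\delta(x) := v(x) + \delta\,\Phi(r(x)).$$
Since $v$ is bounded below and $\Phi(r(x)) \to \infty$ as $r(x) \to \infty$ (using completeness of $\Sigma$ so that $r$ is proper), the infimum of $w_\delta$ is attained at some $p_\delta \in \Sigma$. A direct computation gives $|\nabla \Phi(r)| = \Phi'(r) \leq 1$, and under $\mathrm{Ric}_\Sigma \geq -E$ the Laplacian comparison theorem yields $\Delta r \leq (n-1)\sqrt{E/(n-1)}\,\coth(\sqrt{E/(n-1)}\,r)$ at points where $r$ is smooth. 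Combined with $\Phi''(r) = (1+r^2)^{-3/2}$ and the identity $\Delta \Phi(r) = \Phi''(r)|\nabla r|^2 + \Phi'(r)\Delta r$, one checks that $\Delta \Phi(r) \leq C(n,E)$ globally for a constant depending only on $n$ and $E$. At $p_\delta$ the first-order condition $\nabla w_\delta(p_\delta) = 0$ gives $|\nabla v(p_\delta)| \leq \delta$, the second-order condition $\Delta w_\delta(p_\delta) \geq 0$ gives $\Delta v(p_\delta) \geq -\delta\,C(n,E)$, and $w_\delta(p_\delta) \leq w_\delta(x_0) = v(x_0) + \delta$ gives $v(p_\delta) < v_* + \epsilon/2 + \delta$. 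Choosing $\delta$ smaller than $\min\{\epsilon/2,\,\epsilon/C(n,E)\}$ then makes $p_\epsilon := p_\delta$ satisfy all three required inequalities.

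The main technical obstacle is that $r$ fails to be smooth on the cut locus of $x_0$ and at $x_0$ itself, so $\Phi(r)$ need not be $C^2$ at $p_\delta$, and the pointwise first- and second-order conditions at $p_\delta$ require justification. The standard resolution is Calabi's trick: choose a unit-speed minimizing geodesic $\gamma$ from $x_0$ to $p_\delta$ and, for small $\eta > 0$, replace $r$ near $p_\delta$ by the smooth upper barrier $\tilde r_\eta(x) := \eta + \mathrm{dist}_\Sigma(x,\gamma(\eta))$. Since $p_\delta$ is not conjugate to $\gamma(\eta)$ for small $\eta$, the function $\tilde r_\eta$ is smooth near $p_\delta$; moreover $\tilde r_\eta \geq r$ with equality at $p_\delta$, and $\tilde r_\eta$ satisfies the same Laplacian comparison bound up to an error that vanishes as $\eta \to 0$. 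Applying the argument above with $r$ replaced by $\tilde r_\eta$, and then sending $\eta \to 0$, yields the pointwise estimates. Once this regularization is in place, the remainder of the proof is direct.
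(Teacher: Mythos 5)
The paper states Lemma \ref{lem:YauMax} as an external, cited result (Yau, 1975) and supplies no proof of its own, so there is no internal argument to compare against. Your argument is a correct and complete proof along the standard modern lines: penalize $v$ by a slowly growing radial barrier $\delta\sqrt{1+r^2}$ so that the infimum is attained at some $p_\delta$, use the first-order condition and $|\nabla\Phi(r)|=\Phi'(r)|\nabla r|\leq 1$ for the gradient bound, use the second-order condition together with the Laplacian comparison theorem under $\mathrm{Ric}_\Sigma\geq -E$ (noting that $\Phi'(r)\Delta r$ stays bounded as $r\to 0^+$ because $\Phi'(r)\sim r$ cancels the $1/r$ blow-up, and as $r\to\infty$ because $\coth\to 1$) to get a uniform bound on $\Delta\Phi(r)$ and hence on $\Delta v(p_\delta)$, and use $w_\delta(p_\delta)\leq w_\delta(x_0)$ for the value bound; Calabi's upper-barrier trick handles the non-smoothness of $r$ on the cut locus. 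This is a genuinely different route from Yau's 1975 proof, which goes through his gradient estimate rather than direct penalization; your version is the cleaner and more commonly presented one. One small point worth making explicit: when $p_\delta=x_0$, the identity $\Delta\Phi(r)=\Phi''(r)|\nabla r|^2+\Phi'(r)\Delta r$ cannot be used pointwise since $r$ itself is not differentiable at $x_0$. But $\Phi(r)=\sqrt{1+r^2}$ is a smooth function of the smooth quantity $r^2$ near $x_0$, and computing via $u=r^2$ (with $\Delta u(x_0)=2n$, $\nabla u(x_0)=0$) gives $\Delta\Phi(r)(x_0)=n$, which is absorbed into $C(n,E)$; so the uniform bound holds there as well, without needing Calabi's trick at the center.
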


\indent The following result appears to be well-known to experts (see, e.g., \cite[Corollary 3.9]{albanese2017schwarz}), but we were unable to locate a proof in the literature.  For completeness, we supply a proof.

\begin{prop} \label{prop:MaximumArgument} Let $(\Sigma, g_\Sigma)$ be a complete Riemannian manifold with $\mathrm{Ric}_\Sigma \geq -E$ for some constant $E > 0$.  Let $u \in C^\infty(\Sigma)$ be a non-negative function. If there exist constants $A \geq 0$, $B > 0$, $c > 0$ such that
$\Delta u \geq -Au + Bu^{1+c}$, then $u$ is bounded and $u^c \leq \frac{A}{B}$. 
\end{prop}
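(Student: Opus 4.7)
The idea is to apply Yau's maximum principle (Lemma \ref{lem:YauMax}) to a bounded, monotonically decreasing function of $u$, chosen so that the error terms produced by the principle are dominated by the superlinear term $Bu^{1+c}$ on the right-hand side of the hypothesis. Since Yau's principle supplies a sequence at which the function approaches its \emph{infimum}, the natural choice is
$$v := (1+u)^{-\gamma}$$
for a parameter $\gamma > 0$ to be specified. Note that $v$ is smooth, takes values in $(0,1]$ (hence is bounded below by $0$), and approaches its infimum precisely where $u$ approaches its supremum.

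Next I would compute $\nabla v = -\gamma(1+u)^{-\gamma-1}\nabla u$ and
$$\Delta v = -\gamma(1+u)^{-\gamma-1}\Delta u + \gamma(\gamma+1)(1+u)^{-\gamma-2}|\nabla u|^2.$$
Applying Yau's principle to $v$ yields a sequence $p_n \in \Sigma$ with $v(p_n) \to \inf v$, $|\nabla v(p_n)| < 1/n$, and $\Delta v(p_n) > -1/n$. Writing $u_n := u(p_n)$, the first condition forces $u_n \to \sup u$ (possibly $+\infty$). Rearranging the estimates on $\nabla v$ and $\Delta v$ for $\nabla u$ and $\Delta u$, then inserting the hypothesis $\Delta u(p_n) \geq -Au_n + Bu_n^{1+c}$, I expect to reach an inequality of the shape
$$-Au_n + Bu_n^{1+c} \;<\; \frac{(1+u_n)^{\gamma+1}}{\gamma\, n} + \frac{(\gamma+1)(1+u_n)^{2\gamma+1}}{\gamma^2\, n^2}. \quad (\star)$$

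To finish, the plan is to choose $\gamma \in (0, c/2)$ and send $n \to \infty$ in $(\star)$. If $\sup u = \infty$, then $u_n \to \infty$, and dividing through by $u_n^{1+c}$ makes the right-hand side a sum of terms of order $u_n^{\gamma-c}/n$ and $u_n^{2\gamma-c}/n^2$, both of which tend to $0$ since the exponents $\gamma-c$ and $2\gamma-c$ are strictly negative; meanwhile the left-hand side tends to $B > 0$, a contradiction. Hence $\sup u < \infty$, in which case the bounded sequence $u_n$ forces the right-hand side of $(\star)$ to $0$ directly, yielding $B(\sup u)^{1+c} \leq A\sup u$, and therefore $u^c \leq (\sup u)^c \leq A/B$ as desired (the edge case $\sup u = 0$ is trivial). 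The main obstacle is calibrating the decay rate of $v$: if $\gamma \geq c/2$, then the quadratic gradient error $(1+u_n)^{2\gamma+1}/n^2$ is of order at least $u_n^{1+c}$ and could swamp the gain $Bu_n^{1+c}$; the hypothesis $c > 0$ is exactly what permits the valid choice $\gamma \in (0, c/2)$.
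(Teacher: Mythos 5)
Your proof is correct and takes essentially the same approach as the paper's: both apply Yau's maximum principle to $v = (1+u)^{-\gamma}$ with $\gamma \in (0, c/2)$, invert to get an inequality of the shape $(\star)$, and send the error terms to zero, treating the bounded and unbounded cases separately. Your coefficient $\gamma^2$ in the gradient error term is the correct one (the paper has a harmless typo writing $a^3$ there).
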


\begin{proof} Let $u \in C^\infty(\Sigma)$ be a non-negative function for which $\Delta u \geq -Au + Bu^{1+c}$, where $A \geq 0$ and $B,c > 0$.  Let $v = (1+u)^{-a}$, where $0 < a < \frac{1}{2}c$.  Then $v \in C^\infty(\Sigma)$ and $v \geq 0$.  Using the shorthand $v' := -a(1+u)^{-a-1}$ and $v'' := a(1+a)(1+u)^{-a-2}$, we have $\nabla v = v' \nabla u$ and $\Delta v = v' \Delta u + v'' |\nabla u|^2$.  Therefore,
\begin{align*}
\Delta u = \frac{1}{v'} \Delta v - \frac{v''}{v'} |\nabla u|^2 & = \frac{1}{v'} \Delta v - \frac{v''}{(v')^3} |\nabla v|^2 \\
& = -\frac{1}{a} (1+u)^{a+1} \Delta v + \frac{1+a}{a^2}(1+u)^{1+2a} |\nabla v|^2.
\end{align*}
Now, let $\epsilon > 0$.  By Yau's maximum principle (Lemma \ref{lem:YauMax}), there exists $p_\epsilon \in \Sigma$ such that $|(\nabla v)(p_\epsilon)| < \epsilon$, $(\Delta v)(p_\epsilon) > -\epsilon$, and $v(p_\epsilon) < \inf(v) + \epsilon$.  Therefore, at the point $p_\epsilon$, we have
\begin{align} \label{eq:PrelimBound}
-Au + Bu^{1+c} \leq \Delta u & <  \frac{1}{a} (1+u)^{1+a} \epsilon + \frac{1+a}{a^2}(1+u)^{1+2a} \epsilon^2 \notag \\
& < \epsilon(1+u)^{1+c} \left( \frac{1}{a} + \frac{1+a}{a^2} \epsilon \right)\!.
\end{align}
\indent Now, suppose for contradiction that $u$ were unbounded.  Then $\inf(v) = 0$, so $v(p_\epsilon) < \epsilon$, so $u(p_\epsilon) > (\frac{1}{\epsilon})^{1/a} - 1 \to \infty$ as $\epsilon \to 0$.  Now, the bound (\ref{eq:PrelimBound}) gives
\begin{equation}
-A \frac{u}{(1+u)^{1+c}} + B\left( \frac{u}{1+u} \right)^{1+c} < \epsilon \left( \frac{1}{a} + \frac{1+a}{a^2} \epsilon \right)\!,
\end{equation}
so sending $\epsilon \to 0$ gives $B \leq 0$, contrary to assumption.  Thus, $u$ is bounded, so $\sup(u)$ is finite.  Finally, letting $\epsilon \to 0$ in (\ref{eq:PrelimBound}) and noting that $u(p_\epsilon) \to \sup(u)$, we obtain $-A \sup(u) + B \sup(u)^{1+c} \leq 0$.  Rearranging gives $\sup(u)^c \leq \frac{A}{B}$, which yields the claim.
\end{proof}

\indent We arrive at the main result of this section:

\begin{thm}[Schwarz Lemma] \label{thm:SchwarzMain} Let $f \colon (\Sigma^k, g_\Sigma, \vol_\Sigma) \to (X^n, g_X, \phi)$ be a smooth map, $\phi \in \Omega^k(X)$.  Suppose that $\Sigma$ is complete, and that $\mathrm{Ric}_\Sigma \geq -E$ for some $E > 0$.
\begin{enumerate}[(a)]
\item Suppose $f$ is weakly conformal and $k$-harmonic.  If $\mathrm{Scal}_\Sigma \geq -S$ for $S \geq 0$, and if $\mathrm{Sec}_X \leq -B$ for $B > 0$, then $|df|^2 \leq \frac{S}{(k-1)B}$.
\item Suppose $f$ is a Smith immersion.  If $\mathrm{Scal}_\Sigma \geq -S$ for $S \geq 0$, and if $\mathrm{Sec}^\phi_X \leq -A$ for $A > 0$, then $|df|^2 \leq \frac{kS}{A}$.
\end{enumerate}
\end{thm}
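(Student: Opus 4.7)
The strategy is to combine the Bochner formula of Proposition \ref{prop:Bochner} with the maximum principle argument of Proposition \ref{prop:MaximumArgument}. In case (a) the hypotheses of Proposition \ref{prop:Bochner} on $f$ are given directly; in case (b), Smith immersions are weakly conformal by definition and $k$-harmonic by Smith's theorem, so the formula applies. Completeness and the Ricci lower bound on $\Sigma$ are precisely the hypotheses needed to invoke Proposition \ref{prop:MaximumArgument}.

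I would begin by discarding the two manifestly non-negative gradient terms from the Bochner formula, leaving
\begin{equation*}
\tfrac{k-1}{k^2}\,\Delta |df|^{2k} \;\geq\; \tfrac{|df|^{2k}}{k}\,\mathrm{Scal}_\Sigma \;-\; \tfrac{|df|^{2k+2}}{k^2}\sum_{i,j}\mathrm{Sec}_X(v_i\wedge v_j).
\end{equation*}
To estimate the curvature sum, I would split cases: for (a), the symmetry $\mathrm{Sec}_X(v_i\wedge v_j) = \mathrm{Sec}_X(v_j \wedge v_i)$ together with $\mathrm{Sec}_X(v_i\wedge v_i) = 0$ gives $\sum_{i,j}\mathrm{Sec}_X(v_i\wedge v_j) = 2\sum_{i<j}\mathrm{Sec}_X(v_i\wedge v_j) \leq -k(k-1)B$; for (b), Proposition \ref{prop:Bochner} itself supplies the identity $\sum_{i,j}\mathrm{Sec}_X(v_i\wedge v_j) = \mathrm{Sec}^\phi_X(df(T\Sigma)) \leq -A$, valid on the regular set where $df(T\Sigma)$ is genuinely a $\phi$-plane. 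Combining with $\mathrm{Scal}_\Sigma \geq -S$ and introducing the substitution $u := |df|^{2k}$, which makes $|df|^{2k+2} = u^{1+1/k}$, the inequality rearranges to
\begin{equation*}
\Delta u \;\geq\; -\alpha u \;+\; \beta u^{1+1/k},
\end{equation*}
with $(\alpha,\beta) = (kS/(k-1),\, kB)$ in case (a) and $(\alpha,\beta) = (kS/(k-1),\, A/(k-1))$ in case (b).

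Applying Proposition \ref{prop:MaximumArgument} with $c = 1/k$ then yields $|df|^2 = u^{1/k} \leq \alpha/\beta$, which simplifies to $|df|^2 \leq S/((k-1)B)$ in case (a) and $|df|^2 \leq kS/A$ in case (b). The main obstacle, already absorbed into Proposition \ref{prop:MaximumArgument}, is that $\Sigma$ need not be compact, so the usual trick of evaluating $\Delta |df|^{2k} \leq 0$ at a global maximum is unavailable; Yau's maximum principle (Lemma \ref{lem:YauMax}) instead supplies almost-maximum points at which the differential inequality can be extracted in the limit. Beyond invoking these cited results, the only real bookkeeping is choosing the substitution $u = |df|^{2k}$ so that the super-quadratic curvature term $|df|^{2k+2}$ takes the form $u^{1+c}$ required by Proposition \ref{prop:MaximumArgument}.
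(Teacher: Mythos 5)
Your proposal is correct and follows the same route as the paper's proof: apply the Bochner formula of Proposition \ref{prop:Bochner}, drop the nonnegative gradient terms, estimate the curvature sum via $\sum_{i,j}\mathrm{Sec}_X(v_i\wedge v_j)\leq -k(k-1)B$ in case (a) and via $\mathrm{Sec}^\phi_X$ in case (b), and invoke Proposition \ref{prop:MaximumArgument} with $u=|df|^{2k}$. Your constants $(\alpha,\beta)$ and the resulting bounds agree with the paper's arithmetic.
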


\begin{proof} Suppose $f \colon \Sigma \to X$ is weakly conformal and $k$-harmonic.  The Bochner formula of Proposition \ref{prop:Bochner} gives
\begin{align} \label{eq:Bochner-Step}
\frac{k-1}{k^2} \Delta |df|^{2k} & \geq \frac{|df|^{2k}}{k}\,\mathrm{Scal}_\Sigma - \frac{|df|^{2k+2}}{k^2} \sum_{i,j} \mathrm{Sec}_X(v_i \wedge v_j).
%& = \frac{\mathrm{Scal}_\Sigma }{k} |df|^{2k} - \frac{1}{k^2} \sum_{i,j} \mathrm{Sec}_X(v_i \wedge v_j) \left(|df|^{2k}\right)^{1 + \frac{1}{k}}.
\end{align}
\begin{enumerate}[(a)]
\item If $\mathrm{Scal}_\Sigma \geq -S$ and $\mathrm{Sec}_X \leq -B$, then the bound (\ref{eq:Bochner-Step}) gives
\begin{align*}
\frac{k-1}{k^2} \Delta |df|^{2k} & \geq -\frac{S}{k} |df|^{2k} + \frac{B(k-1)}{k} \left( |df|^{2k} \right)^{1 + \frac{1}{k}}.
\end{align*}
Applying Proposition \ref{prop:MaximumArgument} to the function $u = |df|^{2k}$ gives the result.
\item If $\mathrm{Scal}_\Sigma \geq -S$ and $\mathrm{Sec}^\phi_X \leq -A$, recalling that $\sum \mathrm{Sec}_X(v_i \wedge v_j) = \mathrm{Sec}^\phi_X( df(T\Sigma))$, we see that (\ref{eq:Bochner-Step}) gives
\begin{align*}
\frac{k-1}{k^2} \Delta |df|^{2k} & \geq -\frac{S}{k}|df|^{2k} + \frac{A}{k^2} (|df|^{2k})^{1 + \frac{1}{k}}.
\end{align*}
Applying Proposition \ref{prop:MaximumArgument} to the function $u = |df|^{2k}$ gives the result.
\end{enumerate}
\end{proof}

\subsection{Consequences} \label{sub:Consequences}

\indent \indent We now draw several consequences from the Schwarz lemma.  To do so, we need the following fact about weakly conformal maps.

\begin{lem}[Basic distance estimate] \label{lem:BasicDistance} Let $f \colon (\Sigma^k, g_\Sigma) \to (X^n, g_X)$ be a weakly conformal map such that $|df|^2 \leq D$.  Then for all $p \in \Sigma$ and $w \in T_p\Sigma$ with $|w| = 1$, we have $\left| df_p(w) \right| \leq \sqrt{ \frac{D}{k} }$.  Moreover,
$$\mathrm{dist}_X(f(p), f(q)) \leq \sqrt{ \frac{D}{k} }\,\mathrm{dist}_\Sigma(p,q), \ \ \forall p,q \in \Sigma.$$
\end{lem}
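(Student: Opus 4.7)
The plan is to exploit the weak conformality of $f$ to convert the Hilbert--Schmidt bound $|df|^2 \leq D$ into a pointwise operator-norm bound, and then integrate along curves.

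First, recall from equation (\ref{eq:Lambda}) in the paper that for a weakly conformal map $f \colon (\Sigma^k, g_\Sigma) \to (X^n, g_X)$ with conformal factor $\lambda$, one has $\lambda = \frac{1}{\sqrt{k}}|df| = \Vert df \Vert_{\mathrm{op}}$. Thus for any unit vector $w \in T_p\Sigma$,
\[
|df_p(w)|_X^2 = (f^*g_X)(w,w) = \lambda(p)^2 |w|_\Sigma^2 = \lambda(p)^2 = \tfrac{1}{k}|df_p|^2 \leq \tfrac{D}{k},
\]
which is the first assertion.

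For the distance estimate, I would let $\gamma \colon [0,1] \to \Sigma$ be any piecewise-smooth curve from $p$ to $q$, and estimate the length of $f \circ \gamma$ by applying the pointwise bound just established to the unit vector $\gamma'(t)/|\gamma'(t)|_\Sigma$ (on the open set where $\gamma' \neq 0$, which is all that matters for length):
\[
L_X(f \circ \gamma) = \int_0^1 |df_{\gamma(t)}(\gamma'(t))|_X \, dt = \int_0^1 \lambda(\gamma(t))\,|\gamma'(t)|_\Sigma\,dt \leq \sqrt{\tfrac{D}{k}}\,L_\Sigma(\gamma).
\]
Since $\mathrm{dist}_X(f(p), f(q)) \leq L_X(f \circ \gamma)$, taking the infimum over all such $\gamma$ yields $\mathrm{dist}_X(f(p), f(q)) \leq \sqrt{D/k}\,\mathrm{dist}_\Sigma(p,q)$.

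There is no real obstacle here; the lemma is essentially a bookkeeping consequence of the identity $\Vert df \Vert_{\mathrm{op}} = \frac{1}{\sqrt{k}}|df|_{\mathrm{HS}}$, which is special to the weakly conformal case (in general the operator and Hilbert--Schmidt norms differ by a factor in $[1,\sqrt{k}]$). The only minor point to be careful about is the handling of points where $\gamma'(t) = 0$ or where $\lambda$ vanishes, but both contribute zero to the length integral, so the estimate goes through unchanged.
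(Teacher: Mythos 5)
Your proof is correct and follows essentially the same route as the paper's: both use the identity $\lambda = \frac{1}{\sqrt{k}}|df| = \Vert df\Vert_{\mathrm{op}}$ from (\ref{eq:Lambda}) for the pointwise bound, then integrate $\lambda$ along curves and take an infimum to get the distance estimate. The side remark about zero measure not affecting the length integral is a fine addition but not strictly necessary.
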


\begin{proof} Let $f \colon \Sigma \to X$ be weakly conformal, fix $p \in \Sigma$, and let $\lambda(p)$ denote the conformal factor of $f$ at $p$.  Then for all $w \in T_p\Sigma$ with $|w| = 1$, we have (using (\ref{eq:Lambda})) $|df_p(w)| = \lambda(p) |w| = \lambda(p) = \frac{1}{\sqrt{k}}|df_p| \leq \sqrt{\frac{D}{k}}$.  Now, fix $p,q \in \Sigma$, and consider the following two sets of piecewise-smooth curve segments:
\begin{align*}
C_\Sigma & = \left\{ \widetilde{\gamma} \colon [0,1] \to \Sigma \colon \widetilde{\gamma}(0) = p, \widetilde{\gamma}(1) = q \right\} \\
C_X & = \left\{ \gamma \colon [0,1] \to X \colon \gamma(0) = f(p), \gamma(1) = f(q) \right\}\!.
\end{align*}
Notice that $C_X$ contains the class of curve segments $\{f \circ \widetilde{\gamma} \colon [0,1] \to X \mid \widetilde{\gamma} \in C_\Sigma\}$.  Therefore,
\begin{align*}
\mathrm{dist}_X(f(p), f(q)) = \inf_{\gamma \in C_X} \int_0^1 | \gamma'(t) |_X\,dt & \leq \inf_{\widetilde{\gamma} \in C_\Sigma} \int_0^1 \left| (f \circ \widetilde{\gamma})'(t) \right|_X\,dt \\
& = \inf_{\widetilde{\gamma} \in C_\Sigma} \int_0^1 \lambda(\widetilde{\gamma}(t)) \left| \widetilde{\gamma}'(t) \right|_\Sigma dt \\
& \leq \sqrt{\frac{D}{k}} \,\mathrm{dist}_\Sigma(p,q).
\end{align*}
\end{proof}

\begin{cor}[Distance estimates] \label{thm:DistEst} Let $f \colon (\Sigma^k, g_\Sigma, \vol_\Sigma) \to (X^n, g_X, \phi)$ be a smooth map, $\phi \in \Omega^k(X)$, where $\Sigma$ is complete and $\mathrm{Ric}_\Sigma \geq -E$ for some $E > 0$.
\begin{enumerate}[(a)]
\item Suppose $f$ is $k$-harmonic and weakly conformal.  If $\mathrm{Scal}_\Sigma \geq -S$ for $S \geq 0$, and if $\mathrm{Sec}_X \leq -B$ for $B > 0$, then
$$\mathrm{dist}_X(f(p), f(q)) \leq \sqrt{ \frac{S}{k(k-1)B } }\,\mathrm{dist}_\Sigma(p,q), \ \ \forall p,q \in \Sigma.$$
\item Suppose $f$ is a Smith immersion.  If $\mathrm{Scal}_\Sigma \geq -S$ for $S \geq 0$, and if $\mathrm{Sec}^\phi_X \leq -A$ for $A > 0$, then
$$\mathrm{dist}_X(f(p), f(q)) \leq \sqrt{ \frac{S}{A} }\,\mathrm{dist}_\Sigma(p,q), \ \ \forall p,q \in \Sigma.$$
\end{enumerate}
\end{cor}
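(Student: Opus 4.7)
The plan is to combine the two ingredients that have just been established: the pointwise bound on $|df|^2$ from the Schwarz lemma (Theorem \ref{thm:SchwarzMain}) and the basic distance estimate for weakly conformal maps (Lemma \ref{lem:BasicDistance}). Both hypotheses of Lemma \ref{lem:BasicDistance} (weak conformality and a uniform bound $|df|^2 \leq D$) are supplied for free in each case, so the corollary should follow by direct substitution with no further analytic work.

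For part (a), I would first note that $f$ is weakly conformal by hypothesis, and that Theorem \ref{thm:SchwarzMain}(a) provides the bound
\begin{equation*}
|df|^2 \leq D := \frac{S}{(k-1)B}.
\end{equation*}
Applying Lemma \ref{lem:BasicDistance} with this value of $D$ then yields
\begin{equation*}
\mathrm{dist}_X(f(p), f(q)) \leq \sqrt{\tfrac{D}{k}}\,\mathrm{dist}_\Sigma(p,q) = \sqrt{\tfrac{S}{k(k-1)B}}\,\mathrm{dist}_\Sigma(p,q),
\end{equation*}
which is the claim.

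For part (b), a Smith immersion is in particular weakly conformal, so Lemma \ref{lem:BasicDistance} again applies. Theorem \ref{thm:SchwarzMain}(b) now gives $|df|^2 \leq D := \frac{kS}{A}$, and substituting into the basic distance estimate produces
\begin{equation*}
\mathrm{dist}_X(f(p), f(q)) \leq \sqrt{\tfrac{D}{k}}\,\mathrm{dist}_\Sigma(p,q) = \sqrt{\tfrac{S}{A}}\,\mathrm{dist}_\Sigma(p,q).
\end{equation*}

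There is no real obstacle here, since the Schwarz lemma and the distance lemma were both proved above; the corollary is purely a matter of bookkeeping the constants. The only thing to double-check is that the lower Ricci bound $\mathrm{Ric}_\Sigma \geq -E$ and completeness of $\Sigma$ (required to invoke Theorem \ref{thm:SchwarzMain}) are indeed part of the hypotheses of the corollary, which they are.
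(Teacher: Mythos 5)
Your proposal is correct and is essentially the paper's own proof: combine Theorem \ref{thm:SchwarzMain} to bound $|df|^2$ and then plug that bound into Lemma \ref{lem:BasicDistance}. The constants also check out exactly as you computed them.
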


\begin{proof} ${}$
\begin{enumerate}[(a)]
\item The Schwarz Lemma \ref{thm:SchwarzMain}(a) gives $|df|^2 \leq \frac{S}{(k-1)B}$.  Now apply the basic distance estimate \ref{lem:BasicDistance}.
\item The Schwarz Lemma \ref{thm:SchwarzMain}(b) gives $|df|^2 \leq \frac{kS}{A}$.  Now apply the basic distance estimate \ref{lem:BasicDistance}.
\end{enumerate}
\end{proof}

\indent It is well-known that if a K\"{a}hler manifold $X$ has holomorphic sectional curvature bounded above by a negative constant, then $X$ is Brody hyperbolic.  We now generalize this fact to arbitrary calibrated geometries:

\begin{thm}[Negative $\phi$-curvature implies $\phi$-hyperbolicity] \label{cor:NegativeCurvature} Let $(X, g_X, \phi)$ be a calibrated manifold.  If $\mathrm{Sec}^\phi_X \leq -A$ for some $A > 0$, then $X$ is $\phi$-hyperbolic.
\end{thm}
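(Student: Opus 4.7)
The plan is to deduce this immediately from the Smith immersion Schwarz lemma, Theorem \ref{thm:SchwarzMain}(b), by taking the domain to be flat Euclidean space. Let $f \colon (\R^k, g_0, \vol_0) \to (X, g_X, \phi)$ be an arbitrary Smith immersion; the goal is to show that $f$ must be constant.

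First I would verify that $(\R^k, g_0)$ satisfies the three structural hypotheses of Theorem \ref{thm:SchwarzMain}(b). Flat Euclidean space is geodesically complete; its Ricci tensor vanishes identically, so the lower bound $\mathrm{Ric}_\Sigma \geq -E$ holds for any $E > 0$; and, crucially, its scalar curvature also vanishes identically, so $\mathrm{Scal}_\Sigma \geq -S$ holds with $S = 0$. Combined with the hypothesis $\mathrm{Sec}^\phi_X \leq -A$ for some $A > 0$, part (b) of the Schwarz lemma then yields the pointwise bound
\[
|df|^2 \leq \frac{kS}{A} = 0
\]
on all of $\R^k$. This forces $df \equiv 0$, so $f$ is locally constant, and since $\R^k$ is connected, $f$ is constant. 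As the Smith immersion $f$ was arbitrary, this gives exactly the definition of $\phi$-hyperbolicity.

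There is essentially no obstacle remaining at this stage: all of the analytic heavy lifting has already been carried out in the proof of the Schwarz lemma, namely the Bochner formula of Proposition \ref{prop:Bochner}, the Yau maximum principle argument of Proposition \ref{prop:MaximumArgument}, and the weakly conformal curvature identity of Lemma \ref{prop:Classic-Bochner} that replaces $\sum_{i,j} \mathrm{Sec}_X(v_i \wedge v_j)$ with $\mathrm{Sec}^\phi_X(df(T\Sigma))$ in the Smith case. The only content of the present theorem is the observation that flat $\R^k$ admits $S = 0$ as a valid scalar curvature lower bound, which collapses the Schwarz estimate to $|df|^2 \leq 0$ regardless of how small the negative upper bound $-A$ on $\mathrm{Sec}^\phi_X$ happens to be. This is the exact analogue of the standard derivation of Brody hyperbolicity from strongly negative holomorphic sectional curvature in the Kähler setting, and the argument is correspondingly short.
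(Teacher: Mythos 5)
Your proof is correct and takes essentially the same route as the paper: both derive the result from the Smith-immersion Schwarz lemma with domain $(\R^k, g_0)$ and $S = 0$. The only cosmetic difference is that the paper invokes the distance estimate (Corollary \ref{thm:DistEst}(b)) to get $\mathrm{dist}_X(f(p), f(q)) \leq \sqrt{S/A}\,\mathrm{dist}_\Sigma(p,q) = 0$, whereas you apply Theorem \ref{thm:SchwarzMain}(b) directly to get $|df|^2 \leq kS/A = 0$; these are trivially equivalent consequences of the same chain of estimates.
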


\begin{proof} Suppose that $\mathrm{Sec}^\phi_X \leq -A$ for some $A > 0$.  If $f \colon (\R^k, g_0, \vol_0) \to (X^n, g_X, \phi)$ is a Smith immersion, Corollary \ref{thm:DistEst}(b) implies that $\mathrm{dist}_X(f(p), f(q)) = 0$ for all $p,q \in \R^k$, so $f$ is constant.
\end{proof}

\begin{cor} \label{cor:Vol-Hyperbolic} Let $(X^n, g_X, \vol_X)$ be an oriented Riemannian $n$-manifold.  If $\mathrm{Scal}_X \leq -A$ for some $A > 0$, then $X$ is $\vol_X$-hyperbolic (i.e., every orientation-preserving, weakly conformal map $(\R^n, g_0) \to (X^n, g_X)$ is constant).
\end{cor}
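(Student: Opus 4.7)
The plan is to observe that this corollary is essentially an immediate specialization of Theorem \ref{cor:NegativeCurvature} to the case $\phi = \vol_X$, once we identify the $\phi$-sectional curvature in this case with the scalar curvature.

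First I would recall from Example \ref{example:Sectional} that when $\phi = \vol_X \in \Omega^n(X)$, the only $\phi$-plane at each point $x \in X$ is the entire tangent space $T_xX$ itself, and the $\phi$-sectional curvature evaluated on this $\phi$-plane is
\begin{equation*}
\mathrm{Sec}^{\vol_X}(T_xX) = \sum_{1 \leq i,j \leq n} R_{ijji} = \mathrm{Scal}_X(x),
\end{equation*}
where $\{e_1, \ldots, e_n\}$ is any $g_X$-orthonormal basis of $T_xX$. Thus the hypothesis $\mathrm{Scal}_X \leq -A$ translates directly to $\mathrm{Sec}^{\vol_X}_X \leq -A$.

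Next I would apply Theorem \ref{cor:NegativeCurvature} with $\phi = \vol_X$ to conclude that $(X, g_X)$ is $\vol_X$-hyperbolic, i.e., that every Smith immersion $(\R^n, g_0, \vol_0) \to (X, g_X, \vol_X)$ is constant. Finally, I would invoke the characterization from the examples earlier in the paper (the top-degree case of conformally calibrating maps) that Smith immersions from $(\R^n, g_0, \vol_0)$ into $(X, g_X, \vol_X)$ are precisely the orientation-preserving, weakly conformal maps $\R^n \to X$, which yields the parenthetical reformulation in the statement. There is no genuine obstacle here; the entire content is the observation that $\mathrm{Sec}^{\vol_X} = \mathrm{Scal}_X$ together with the verification that the hypotheses of Theorem \ref{cor:NegativeCurvature} apply in this degenerate top-degree case.
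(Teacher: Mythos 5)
Your proof is correct and takes precisely the same route as the paper, which simply cites Theorem \ref{cor:NegativeCurvature} together with Example \ref{example:Sectional} identifying $\mathrm{Sec}^{\vol_X}$ with $\mathrm{Scal}_X$. You have merely unpacked those two citations, including the observation that Smith immersions from flat $\R^n$ into $(X,g_X,\vol_X)$ are the orientation-preserving weakly conformal maps.
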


\begin{proof} This follows from Corollary \ref{cor:NegativeCurvature} and Example \ref{example:Sectional}.
\end{proof}

\indent In a first course in complex analysis, one typically learns the Schwarz lemma as a pointwise bound on holomorphic functions $f \colon B^2(R_1) \to B^2(R_2)$ satisfying $f(0) = 0$.  This is a special case of the following:

\begin{prop}[Classic Schwarz Lemma] \label{thm:ClassicSchwarz} Let $f \colon (B^k(R_1), g_{R_1}) \to (B^n(R_2), g_{R_2})$ be a weakly conformal, $k$-harmonic map.
\begin{enumerate}[(a)]
\item For all $w \in T_0B^k$ with $|w| = 1$, we have $|df_0(w)|_{R_2} \leq \frac{R_2}{R_1}$.
\item If $f(0) = 0$, then $\left| f(x) \right| \leq \frac{R_2}{R_1} |x|$ for all $x \in B^k(R_1)$.  (Here, $|\cdot| = |\cdot|_0$ is the euclidean norm.)
\end{enumerate}
\end{prop}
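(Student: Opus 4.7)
The plan is to deduce both parts from the Schwarz lemma (Theorem~\ref{thm:SchwarzMain}(a)) and its distance-estimate corollary (Corollary~\ref{thm:DistEst}(a)), applied with domain $\Sigma = (B^k(R_1), g_{R_1})$ and target $X = (B^n(R_2), g_{R_2})$. By Proposition~\ref{prop:Poincare-Facts}(b), $g_{R_1}$ is complete with constant sectional curvature $-4/R_1^2$, so I may take $E = 4(k-1)/R_1^2$ for the lower Ricci bound and $S = 4k(k-1)/R_1^2$ for the lower scalar bound; similarly, $g_{R_2}$ has $\mathrm{Sec}_{R_2} \equiv -4/R_2^2$, so I may take $B = 4/R_2^2$. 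Theorem~\ref{thm:SchwarzMain}(a) then yields the pointwise bound $|df|^2 \leq \tfrac{S}{(k-1)B} = \tfrac{k R_2^2}{R_1^2}$ on all of $B^k(R_1)$.

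For part~(a), since $f$ is weakly conformal with conformal factor $\lambda = \tfrac{1}{\sqrt{k}}|df|$ by~(\ref{eq:Lambda}), the bound above gives $\lambda \leq R_2/R_1$ pointwise. Specializing to the origin, weak conformality yields $|df_0(w)|_{R_2} = \lambda(0)\,|w|_{R_1} \leq R_2/R_1$ for every unit $w \in T_0 B^k(R_1)$, which is exactly (a). For part~(b), Corollary~\ref{thm:DistEst}(a) with the same constants yields the Lipschitz estimate
$$ \mathrm{dist}_{R_2}\bigl(f(p),f(q)\bigr) \;\leq\; \sqrt{\tfrac{S}{k(k-1)B}}\,\mathrm{dist}_{R_1}(p,q) \;=\; \tfrac{R_2}{R_1}\,\mathrm{dist}_{R_1}(p,q). $$
Taking $p = 0$, $q = x$ and using $f(0) = 0$, this becomes $\mathrm{dist}_{R_2}(0, f(x)) \leq \tfrac{R_2}{R_1}\,\mathrm{dist}_{R_1}(0, x)$.

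To finish (b), I invoke the explicit distance formula of Proposition~\ref{prop:Poincare-Facts}(d): using $\operatorname{arcsinh}\!\bigl(\tfrac{t}{\sqrt{1-t^2}}\bigr) = \tfrac{1}{2}\ln\tfrac{1+t}{1-t}$, the distance to the origin simplifies to $\mathrm{dist}_R(0,y) = \tfrac{R}{2}\ln\tfrac{R+|y|}{R-|y|}$. Substituting this into the previous inequality and exponentiating yields $\tfrac{R_2 + |f(x)|}{R_2 - |f(x)|} \leq \tfrac{R_1 + |x|}{R_1 - |x|}$, which after cross-multiplication is equivalent to $|f(x)|/R_2 \leq |x|/R_1$, i.e., $|f(x)| \leq \tfrac{R_2}{R_1}|x|$. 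The argument is largely mechanical; the only real obstacle is bookkeeping the curvature constants in the opening step and, at the end, converting the Poincar\'e distance bound obtained in (b) into the euclidean norm bound stated in the proposition.
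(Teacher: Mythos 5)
Your proof is correct and follows essentially the same route as the paper: invoke Theorem~\ref{thm:SchwarzMain}(a) with $S = 4k(k-1)/R_1^2$ and $B = 4/R_2^2$ to get $|df|^2 \leq kR_2^2/R_1^2$, deduce part (a) via the conformal-factor identity (which is exactly the content of Lemma~\ref{lem:BasicDistance}), and then use Corollary~\ref{thm:DistEst}(a) together with the explicit Poincar\'e distance formula of Proposition~\ref{prop:Poincare-Facts}(d) for part (b). The only cosmetic difference is that you rewrite the $\operatorname{arcsinh}$ expression as a logarithm before cross-multiplying, whereas the paper keeps $\operatorname{arcsinh}$ and compares $\tfrac{|f(x)|^2}{R_2^2-|f(x)|^2} \leq \tfrac{|x|^2}{R_1^2-|x|^2}$ directly; both manipulations are elementary and equivalent.
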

${}$
\begin{proof} Before beginning, we note that $\mathrm{Scal}_{B^k(R_1)} = -\frac{4}{R_1^2}k(k-1)$ and that $\mathrm{Sec}_{B^n(R_2)} = -\frac{4}{R_2^2}$.
\begin{enumerate}[(a)]
\item The Schwarz Lemma \ref{thm:SchwarzMain}(a) with $S = \frac{4}{R_1^2}k(k-1)$ and $B = \frac{4}{R_2^2}$ implies that $|df|^2 \leq k\frac{R_2^2}{R_1^2}$.  Now the basic distance estimate \ref{lem:BasicDistance} with $D = k\frac{R_2^2}{R_1^2}$ gives the result.
\item Let $x \in B^k(R_1)$.  The distance estimate of Corollary \ref{thm:DistEst}(a) with  $S = \frac{4}{R_1^2}k(k-1)$ and $B = \frac{4}{R_2^2}$ gives
$$\mathrm{dist}_{R_2}(f(x), f(0)) \leq \frac{R_2}{R_1}\,\mathrm{dist}_{R_1}(x,0).$$
Recalling the explicit formula for the Poincar\'{e} distance (Proposition \ref{prop:Poincare-Facts}(d)), and using that $f(0) = 0$, we obtain
$$\mathrm{arcsinh}\sqrt{ \frac{  | f(x) |^2 }{ R_2^2 - |f(x)|^2  } } \leq   \mathrm{arcsinh}\sqrt{ \frac{  | x |^2 }{ R_1^2 - |x|^2  } } .$$
Since $h(t) = \mathrm{arcsinh} (\sqrt{t})$ is an increasing function, it follows that
$$ \frac{  | f(x) |^2 }{ R_2^2 - |f(x)|^2  }  \leq    \frac{  | x |^2 }{ R_1^2 - |x|^2  }.$$
Rearranging yields $|f(x)| \leq \frac{R_2}{R_1}|x|$ as claimed.
\end{enumerate}
\end{proof}

\begin{cor}[Bounded domains in $\R^n$ are $\phi$-hyperbolic] \label{cor:BoundedDomains} Let $U \subset \R^n$ be an open set, and let $\phi \in \Omega^k(\R^n)$ be a calibration.  If $U$ is bounded, then $U$ is $\phi$-hyperbolic.
\end{cor}

\begin{proof} Suppose $U$ is bounded, so that $U \subset B^n(r)$ for some $r > 0$.  Let $f \colon (\R^k, g_0, \vol_0) \to (U, g_0, \phi)$ be a Smith immersion, and let $x_0 \in \R^k$ be arbitrary.  For each $R > |x_0|$, the map $h_R \colon B^k(R) \to B^n(2r)$ given by $h_R(x) = f(x) - f(0)$ is weakly-conformal and $k$-harmonic with $h_R(0) = 0$, so Proposition \ref{thm:ClassicSchwarz}(b) implies that
$$| f(x) - f(0) | =  | h_R(x) | \leq \frac{2r}{R}|x|, \text{ for all }x \in B^k(R).$$
In particular, $\left| f(x_0) - f(0) \right| \leq \frac{2r}{R}|x_0|$.  Letting $R \to \infty$, we see that $|f(x_0) - f(0)| = 0$, so $f(x_0) = f(0)$.  Since $x_0 \in \R^k$ was arbitrary, we conclude that $f$ is constant.
\end{proof}

\section{The KR $\phi$-metric}

\indent \indent Our next goal is to generalize the Kobayashi-Royden pseudo-metric $F_X$ (recall (\ref{eq:KRPM})) to arbitrary calibrated geometries.  In $\S$\ref{sub:KR}, for each calibrated manifold $(X,g_X, \phi)$, we define a Finsler pseudo-metric $\mathcal{K}_{(X,\phi)} \colon TX \to [0,\infty]$ called the \emph{KR $\phi$-metric}.  A fundamental fact about $\mathcal{K}_{(X,\phi)}$ is its ``decreasing property" with respect to conformally calibrating maps (Proposition \ref{prop:Dec1}). \\
\indent In practice, deriving an explicit formula for $\mathcal{K}_{(X,\phi)}$ is highly non-trivial.  Even in the K\"{a}hler case, the Kobayashi-Royden pseudo-metric $F_X = \mathcal{K}_{(X,\omega)}$ has been calculated in only a small handful of cases, such as complex hyperbolic space $X = \mathbb{CH}^n$.  The main results of this section (Theorems \ref{thm:K-Poincare} and \ref{thm:Quat-Hyp}) are explicit formulas for the KR $\vol_1$-metric of real hyperbolic space $\mathbb{RH}^n = (B^n, g_1)$ and the KR $\Psi$-metric of quaternionic hyperbolic space $\mathbb{HH}^n$.  The latter calculation makes crucial use of the Schwarz Lemma \ref{thm:SchwarzMain}(b).

\subsection{The KR $\phi$-metric} \label{sub:KR}

\indent \indent Let $(X^n, g_X, \phi)$ be a calibrated $n$-manifold, where $\phi \in \Omega^k(X)$.   Regard the unit $k$-ball $B^k := B^k(1)$ as the calibrated manifold $(B^k, g_1, \vol_1)$, where $g_1$ is the Poincar\'{e} metric and $\vol_1 \in \Omega^k(B^k)$ is the corresponding volume form as discussed in $\S$\ref{sub:Hyperbolic-Space}.  Let
$$\mathrm{SmIm}(B^k, X) = \{\text{Smith immersions } f\colon (B^k, g_1, \vol_1) \to (X^n, g_X, \phi)\}.$$

\begin{defn} The \emph{KR $\phi$-metric} on $(X, g_X)$ is the function
\begin{align*}
\mathcal{K}_{(X,\phi)} \colon TX & \to [0,  \infty] \\
\mathcal{K}_{(X,\phi)}(v_p) & = \inf\!\left\{ a > 0 \colon \exists f \in \mathrm{SmIm}(B^k, X), w \in T_0B^k, |w| = 1 \text{ s.t. } f(0) = p,\, df_0(w) = \frac{1}{a}v \right\}\!.
\end{align*}
If there does not exist a Smith immersion $f \colon B^k \to X$, a unit vector $w \in T_0B^k$, and an $a > 0$ such that $f(0) = p$ and $df_0(w) = \frac{1}{a}v$, then $\mathcal{K}_{(X,\phi)}(v) = \infty$.  We typically use the abbreviation $\mathcal{K}_\phi = \mathcal{K}_{(X,\phi)}$.  
\end{defn}

\indent  Geometrically, $\mathcal{K}_{(X,\phi)}(v)$ can be viewed as the reciprocal of the largest conformal factor that a Smith $k$-disk having $v \in T_pX$ as a tangent vector can have at the point $p$.  When $(X, g_X, \omega)$ is a K\"{a}hler manifold, the KR $\omega$-metric coincides with the Kobayashi-Royden pseudo-metric (\ref{eq:KRPM}).  As such, we now show that many familiar properties extend to arbitrary calibrated manifolds.

\begin{prop} \label{lem:K-Basics}  Let $(X,g_X, \phi)$ be a calibrated manifold, and let $v \in TX$.  Define
\begin{align*}
\mathcal{W}_\phi \colon \mathrm{Gr}(\phi) & \to [0,  \infty] \\
\mathcal{W}_\phi(\xi_p) & = \inf\!\left\{ \frac{1}{\Vert df_0 \Vert} \colon f \in \mathrm{SmIm}(B^k, X), f(0) = p,\, df_0(T_0B^k) = \xi\right\}\!.
\end{align*}
Then we have the formula
$$\mathcal{K}_\phi(v) =  \left| v \right|\, \inf\!\left\{ \mathcal{W}_\phi(\xi) \colon \xi \in \mathrm{Gr}(\phi) \text{ s.t. } v \in \xi \right\}\!.$$
Consequently, $\mathcal{K}_\phi(av) = \left|a\right| \mathcal{K}_\phi(v)$ for all $a \in \R$.
\end{prop}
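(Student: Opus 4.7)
The plan is to exploit the weak conformality of Smith immersions to rigidify the definition of $\mathcal{K}_\phi(v)$: the scalar $a$ in an admissible competitor $(f, w, a)$ is forced by $f$ alone (independent of the choice of unit vector $w$), and the image of $df_0$ is automatically a $\phi$-plane. This lets us re-express the infimum by grouping competitors according to their image $\phi$-plane.

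First, I would record two consequences of weak conformality. For any Smith immersion $f \in \mathrm{SmIm}(B^k, X)$ with conformal factor $\lambda \geq 0$, the fact that $|w|_{g_1} = |w|_0$ at the origin gives $|df_0(w)|_X = \lambda(0) = \|df_0\|$ for every unit vector $w \in T_0 B^k$. Moreover, when $\lambda(0) > 0$ the rescaled differential $\tfrac{1}{\lambda(0)}df_0$ is a linear isometry of $T_0 B^k$ onto its image, and using $f^*\phi = \lambda^k \vol_1$ to evaluate $\phi$ on an orthonormal frame of that image shows the image is a $\phi$-plane $\xi$.

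Assuming $v \neq 0$, I would then match competitors to pairs $(\xi, f)$. If $(f, w, a)$ is admissible, taking norms in $df_0(w) = \tfrac{1}{a}v$ forces $a = \tfrac{|v|}{\|df_0\|}$, so $\lambda(0) > 0$ and $\xi := df_0(T_0 B^k) \in \mathrm{Gr}(\phi)$ contains $v$. Conversely, given any $\phi$-plane $\xi \ni v$ and any Smith immersion $f$ with $f(0) = p$ and $df_0(T_0 B^k) = \xi$, the isometry $\tfrac{1}{\lambda(0)}df_0$ yields a unique unit $w \in T_0 B^k$ mapping to $\tfrac{v}{|v|} \in \xi$, realising $a = \tfrac{|v|}{\|df_0\|}$ as an admissible value. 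Thus the set of admissible values is
\[
\left\{ \tfrac{|v|}{\|df_0\|} \,:\, f \in \mathrm{SmIm}(B^k, X),\ f(0) = p,\ v \in df_0(T_0 B^k) \in \mathrm{Gr}(\phi) \right\}\!,
\]
and stratifying by $\xi := df_0(T_0 B^k)$ yields the claimed formula.

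Finally, the scaling property follows immediately: for $a \neq 0$ the set $\{\xi \in \mathrm{Gr}(\phi) : av \in \xi\}$ coincides with $\{\xi : v \in \xi\}$, so $\mathcal{K}_\phi(av) = |av| \inf_{\xi \ni v} \mathcal{W}_\phi(\xi) = |a|\mathcal{K}_\phi(v)$. The degenerate cases $v = 0$ or $a = 0$ should be checked separately: any constant Smith immersion at $p$ makes every $a > 0$ admissible when $v = 0$, so $\mathcal{K}_\phi(0) = 0$, consistent with the convention $0 \cdot \infty = 0$ on the right-hand side. I do not foresee a serious obstacle; the only care needed is to handle the rank-zero case cleanly.
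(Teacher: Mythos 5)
Your proposal is correct and essentially matches the paper's proof: both rely on the same key observations that weak conformality at the origin forces $a = |v|/\Vert df_0 \Vert$ and that the image $df_0(T_0B^k)$ is automatically a $\phi$-plane (via $f^*\phi = \lambda^k\vol_1$ and the normalization $g_1|_0 = g_0|_0$). The only difference is presentational: you identify the two infima as infima over the same set of values, grouped by the image $\phi$-plane, whereas the paper runs the corresponding two-sided inequality via an $\epsilon$-argument; the mathematical content is identical.
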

\begin{proof} Fix $v \in T_pX$.  Let $\widehat{\mathcal{K}}_\phi(v) = \left| v \right|\, \inf\!\left\{ \mathcal{W}_\phi(\xi) \colon \xi \in \mathrm{Gr}(\phi) \text{ s.t. } v \in \xi \right\}$.  We aim to show that $\mathcal{K}_\phi(v) = \widehat{\mathcal{K}}_\phi(v)$.  \\
\indent Suppose first that $v = 0$.  Then clearly $\widehat{\mathcal{K}}_\phi(v) = 0$.  On the other hand, letting $f \colon (B^k, g_1) \to (X, g_X)$ denote the constant map $f(x) = p$, we see that every $w \in T_0B^k$ with $|w| = 1$ satisfies $df_0(w) = 0$, and so $\mathcal{K}_\phi(v) = 0$, too. \\
\indent Suppose now that $v \neq 0$ and $\mathcal{K}_\phi(v)$ is finite.  Fix $\epsilon > 0$.  Then there exists a Smith immersion $f \colon (B^k, g_1) \to (X,g_X)$ and $w \in T_0B^k$ with $|w| = 1$ such that $f(0) = p$ and $df_0(w) = \frac{1}{a}v$ with $0 < a < \mathcal{K}_\phi(v) + \epsilon$.  Note that since $f$ is conformal, we have $\Vert df_0 \Vert = \Vert df_0 \Vert |w| = |df_0(w)| = \frac{1}{a}|v|$.  Moreover, since $v \neq 0$, the map $df_0\colon T_0B^k \to T_pX$ is injective, so $\xi := df_0(T_0B^k) \in \mathrm{Gr}(\phi)$ and clearly $v \in \xi$.  Therefore,
$$\widehat{\mathcal{K}}_\phi(v) \leq |v|\, \mathcal{W}_\phi(\xi) \leq |v| \frac{1}{\Vert df_0 \Vert} = a < \mathcal{K}_\phi(v) + \epsilon.$$
Letting $\epsilon \to 0$ shows that $\widehat{\mathcal{K}}_\phi(v) \leq \mathcal{K}_\phi(v)$. \\
\indent For the other direction, again assuming $ \widehat{\mathcal{K}}_\phi(v)$ is finite, fix $\epsilon > 0$.  Then there exists $\xi \in \mathrm{Gr}(\phi)$ with $v \in \xi$ such that $|v|\, \mathcal{W}_\phi(\xi) < \widehat{\mathcal{K}}_\phi(v) + \frac{1}{2}\epsilon$.  Moreover, there exists a Smith immersion $f \colon (B^k, g_1) \to (X, g_X)$ having $f(0) = p$ and $df_0(T_0B^k) = \xi$ such that $\frac{1}{\Vert df_0 \Vert} < \mathcal{W}_\phi(\xi) + \frac{1}{2|v|}\epsilon$.  Since $v \in \xi$, there exists $w \in T_0B^k$ with $|w| = 1$ and $a > 0$ such that $df_0(w) = \frac{1}{a}v$.  Again since $f$ is conformal, we have $\Vert df_0 \Vert = \frac{1}{a}|v|$.  Therefore,
$$\mathcal{K}_\phi(v) \leq a = |v| \frac{1}{\Vert df_0 \Vert} < |v| \mathcal{W}_\phi(\xi) + \frac{1}{2}\epsilon < \widehat{\mathcal{K}_\phi}(v) + \epsilon.$$
Letting $\epsilon \to 0$ gives $\mathcal{K}_\phi(v) \leq \widehat{\mathcal{K}}_\phi(v)$.  In fact, we have shown that  $\mathcal{K}_\phi(v)$ is finite if and only if $\widehat{\mathcal{K}}_\phi(v)$ is finite.  Consequently, $\mathcal{K}_\phi(v) = \infty$ if and only if $\widehat{\mathcal{K}}_\phi(v) = \infty$.   \\
\indent Finally, the last claim follows from observing that $\mathcal{K}_\phi(av) = \widehat{\mathcal{K}}_\phi(av) = \left|a\right| \widehat{\mathcal{K}}_\phi(v) = |a|\,\mathcal{K}_\phi(v)$.
\end{proof}

\begin{prop}[Decreasing property] \label{prop:Dec1} Let $f \colon (X^m, g_X, \alpha) \to (Y^n, g_Y, \beta)$ be a conformally calibrating map.  Then for all $p \in X$ and $v \in T_pX$, we have
$$\mathcal{K}_{(Y,\beta)}(df_p(v)) \leq \mathcal{K}_{(X, \alpha)}(v).$$
\end{prop}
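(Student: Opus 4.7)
The plan is to prove the inequality by a direct ``transport'' argument: take any Smith immersion of the Poincar\'{e} $k$-ball into $X$ that witnesses an upper bound on $\mathcal{K}_{(X,\alpha)}(v)$, and push it forward to a Smith immersion into $Y$ witnessing the same upper bound for $\mathcal{K}_{(Y,\beta)}(df_p(v))$. Since $f$ is conformally calibrating, $\deg(\alpha) = \deg(\beta) = k$, so both KR-metrics are defined using the \emph{same} model domain $(B^k, g_1, \vol_1)$, which is what makes the transport possible.

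First I would handle the trivial cases: if $\mathcal{K}_{(X,\alpha)}(v) = \infty$, the inequality is vacuous, and if $v = 0$ both sides vanish by Proposition \ref{lem:K-Basics}. Assume then that $\mathcal{K}_{(X,\alpha)}(v)$ is finite and fix $\epsilon > 0$. By definition, there exist a Smith immersion $g \in \mathrm{SmIm}(B^k, X)$, a unit vector $w \in T_0 B^k$, and a positive number $a < \mathcal{K}_{(X,\alpha)}(v) + \epsilon$ such that $g(0) = p$ and $dg_0(w) = \tfrac{1}{a} v$.

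Next, set $h := f \circ g \colon (B^k, g_1, \vol_1) \to (Y, g_Y, \beta)$. By Proposition \ref{prop:Invariance}(b), since $f$ is conformally calibrating and $g$ is a Smith immersion, $h$ is a Smith immersion. The chain rule gives
\begin{align*}
h(0) &= f(g(0)) = f(p), \\
dh_0(w) &= df_{g(0)}(dg_0(w)) = df_p\!\left(\tfrac{1}{a} v\right) = \tfrac{1}{a}\, df_p(v).
\end{align*}
Thus $a$ lies in the set whose infimum defines $\mathcal{K}_{(Y,\beta)}(df_p(v))$, so $\mathcal{K}_{(Y,\beta)}(df_p(v)) \leq a < \mathcal{K}_{(X,\alpha)}(v) + \epsilon$. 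Letting $\epsilon \to 0$ yields the decreasing property. Note that even in the degenerate case where $df_p(v) = 0$ (for instance when $f$ fails to be conformal at $p$), the argument still produces a valid triple $(h, w, a)$ witnessing $\mathcal{K}_{(Y,\beta)}(0) \leq a$, which is consistent with $\mathcal{K}_{(Y,\beta)}(0) = 0$.

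There is really no ``main obstacle'' here: the result is a formal consequence of the composition law for conformally calibrating maps (Proposition \ref{prop:Composition}/\ref{prop:Invariance}(b)). The only subtle point is to recognize that the definitions of $\mathcal{K}_{(X,\alpha)}$ and $\mathcal{K}_{(Y,\beta)}$ both use $B^k$ with the \emph{same} $k$, so no reparametrization of the domain is necessary.
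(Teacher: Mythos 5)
Your proof is correct and follows essentially the same approach as the paper: compose the witnessing Smith immersion with $f$ (using Proposition \ref{prop:Invariance}(b)) to transport the data from $X$ to $Y$. The only cosmetic difference is that the paper phrases the argument as a containment $S_X \subseteq S_Y$ of the defining sets rather than unwinding the infimum with an $\epsilon$; the content is identical.
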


\begin{proof} Fix $p \in X$ and $v \in T_pX$.  Consider the sets
\begin{align*}
S_X & = \left\{ a > 0 \colon \exists F \in \mathrm{SmIm}(B^k, X^m), w \in T_0B^k, |w| = 1 \text{ s.t. } F(0) = p,\, dF_0(w) = \frac{1}{a}v\right\} \\
S_Y & = \left\{ b > 0 \colon \exists G \in \mathrm{SmIm}(B^k, Y^n), w \in T_0B^k, |w| = 1  \text{ s.t. } G(0) = f(p),\, dG_0(w) = \frac{1}{b}df_p(v)\right\}\!.
\end{align*}
We claim that $S_X \subset S_Y$.  For this, let $a \in S_X$, so there exists a Smith immersion $F \colon (B^k, g_1, \vol_1) \to (X^m, g_X, \alpha)$ and $w \in T_0B^k$ with $|w| = 1$ such that $F(0) = p$ and $dF_0(w) = \frac{1}{a}v$.  Setting $G := f \circ F \colon (B^k, g_1, \vol_1) \to (Y^n, g_Y, \beta)$, we observe that $G$ is a Smith immersion having $G(0) = f(F(0)) = f(p)$ and $dG_0(w) = df_p( dF_0(w) ) = \frac{1}{a} df_p(v)$.  This shows that $a \in S_Y$, which proves the claim.  Finally, since $S_X \subset S_Y$, it follows that $\mathcal{K}_{(Y, \beta)}(df_p(v)) = \inf(S_Y) \leq \inf(S_X) = \mathcal{K}_{(X, \alpha)}(v)$.
\end{proof}

\begin{cor}[Comparison principle] \label{cor:Comparison-K} Let $(X, g_X, \phi)$ be a calibrated manifold, and let $U \subset X$ be an open set.  Then $\mathcal{K}_{(X, \phi)}(v) \leq \mathcal{K}_{(U, \phi)}(v)$ for all $v \in TU$.
\end{cor}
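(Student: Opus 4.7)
The plan is to deduce this corollary as an immediate consequence of the decreasing property (Proposition \ref{prop:Dec1}), applied to the natural inclusion map $\iota \colon U \hookrightarrow X$.

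First I would invoke Example \ref{example:Inclusion}(b), which records that for an open subset $U \subset X$ equipped with the restriction of the ambient metric and calibration, the inclusion $\iota \colon (U, g_X|_U, \phi|_U) \hookrightarrow (X, g_X, \phi)$ is a local Smith equivalence. In particular, $\iota$ is conformally calibrating (with conformal factor $\lambda \equiv 1$).

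Next I would apply Proposition \ref{prop:Dec1} to $\iota$: for every $p \in U$ and every $v \in T_pU$,
$$\mathcal{K}_{(X, \phi)}(d\iota_p(v)) \leq \mathcal{K}_{(U, \phi)}(v).$$
Under the standard identification of $T_pU$ with $T_pX$ at points of $U$, the differential $d\iota_p$ is the identity, so $d\iota_p(v) = v$. Substituting gives $\mathcal{K}_{(X,\phi)}(v) \leq \mathcal{K}_{(U,\phi)}(v)$ for all $v \in TU$, as desired.

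There is no substantive obstacle here; the content of the corollary is entirely packaged into the decreasing property together with the fact that inclusions of open subsets are conformally calibrating. Intuitively, the inequality reflects that any Smith $k$-disk $f \colon B^k \to U$ realizing a vector $v \in T_pU$ also serves as a Smith $k$-disk into $X$ realizing $v$, so the infimum defining $\mathcal{K}_{(X,\phi)}(v)$ is taken over a larger class of competitors than the infimum defining $\mathcal{K}_{(U,\phi)}(v)$, and hence is no larger.
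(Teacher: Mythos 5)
Your proof is correct and follows exactly the paper's argument: both apply the decreasing property (Proposition \ref{prop:Dec1}) to the inclusion $\iota \colon U \hookrightarrow X$, which is conformally calibrating, and then identify $d\iota_p(v)$ with $v$. No further comment is needed.
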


\begin{proof} Let $\iota \colon (U, g_X, \phi) \hookrightarrow (X, g_X, \phi)$ be the inclusion map.  Then $\iota$ is a conformally calibrating map.  So, for all $v \in TU$, Proposition \ref{prop:Dec1} gives $ \mathcal{K}_{(X, \phi)}(v) =  \mathcal{K}_{(X, \phi)}( d\iota_p(v) ) \leq \mathcal{K}_{(U, \phi)}(v)$.
\end{proof}

\begin{cor}[Invariance] \label{cor:Invariance} Let $(X, g_X, \phi)$, $(Y, g_Y, \psi)$ be a calibrated manifolds.  If $F \colon X \to Y$ is a Smith equivalence, then for all $p \in X$ and $v \in T_pX$, we have:
$$\mathcal{K}_{(Y,\psi)}(dF_p(v)) = \mathcal{K}_{(X,\phi)}(v).$$
\end{cor}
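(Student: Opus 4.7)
The plan is to deduce the equality from two applications of the decreasing property (Proposition \ref{prop:Dec1}), one for $F$ and one for $F^{-1}$.

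First I would verify that $F^{-1}$ is itself a Smith equivalence. Since $F \colon X \to Y$ is a Smith equivalence, it is a diffeomorphism satisfying $F^*g_Y = \lambda^2 g_X$ and $F^*\psi = \lambda^k \phi$ for some $\lambda \colon X \to [0,\infty)$. Because $F$ is a diffeomorphism, $dF_p$ is an isomorphism at every $p$, which forces $\lambda > 0$ everywhere. Setting $\mu := 1/(\lambda \circ F^{-1}) \colon Y \to (0,\infty)$, a routine pullback calculation then yields $(F^{-1})^*g_X = \mu^2 g_Y$ and $(F^{-1})^*\phi = \mu^k \psi$, so $F^{-1}$ is conformally calibrating, hence itself a Smith equivalence.

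Next, I would apply Proposition \ref{prop:Dec1} to the conformally calibrating map $F$ at the point $p$ with tangent vector $v$, yielding
\[
\mathcal{K}_{(Y,\psi)}(dF_p(v)) \leq \mathcal{K}_{(X,\phi)}(v).
\]
For the reverse inequality, I would apply the same proposition to the conformally calibrating map $F^{-1}$ at the point $F(p) \in Y$ with tangent vector $dF_p(v) \in T_{F(p)}Y$. Since $d(F^{-1})_{F(p)} \circ dF_p = \mathrm{Id}_{T_pX}$ by the chain rule, this gives
\[
\mathcal{K}_{(X,\phi)}(v) = \mathcal{K}_{(X,\phi)}\bigl(d(F^{-1})_{F(p)}(dF_p(v))\bigr) \leq \mathcal{K}_{(Y,\psi)}(dF_p(v)).
\]
Combining the two inequalities yields the claim.

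There is no real obstacle here: the entire argument collapses once one knows that the inverse of a Smith equivalence is a Smith equivalence, and the only subtlety in checking that is ensuring $\lambda > 0$, which is forced by $F$ being a diffeomorphism. All of the analytic content was already packaged into the decreasing property, so the invariance statement is essentially formal.
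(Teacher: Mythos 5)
Your proof is correct and follows exactly the same strategy as the paper: apply the decreasing property to $F$ and to $F^{-1}$, after noting that $F^{-1}$ is itself a Smith equivalence. The paper states the latter fact without the detailed verification you supply, but your extra care in checking $\lambda > 0$ and writing out the pullback is sound and does not change the argument's structure.
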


\begin{proof} Let $F \colon (X, g_X, \phi) \to (Y, g_Y, \psi)$ be a Smith equivalence.  Fix $p \in X$ and $v \in T_pX$.  Set $q = F(p)$.  Using Proposition \ref{prop:Dec1} twice, observing that $F^{-1}$ is also a Smith equivalence, we have
$$\mathcal{K}_{(Y,\psi)}(dF_p(v)) \leq \mathcal{K}_{(X,\phi)}(v) = \mathcal{K}_{(X,\phi)}( dF^{-1}_q (dF_p(v) ) ) \leq \mathcal{K}_{(Y,\psi)}( dF_p(v)).$$
\end{proof}

\subsection{Examples} \label{sub:Examples-KR}

\begin{example}[K\"{a}hler calibration] \label{ex:KobRoyden} In general, an explicit formula for the Kobayashi-Royden pseudometric of a complex manifold is difficult to find.  Let $X,Y$ be complex manifolds.
\begin{itemize}
\item On the unit disk $\mathbb{D} = \{ z \in \mathbb{C} : | z|<1 \}$, the Kobayashi-Royden metric is given by the (square root of the) Poincar\'e metric: $\mathcal{K}_{(\mathbb{D}, \omega)}(v_z) = \frac{|v|}{1-|z|^2}$.  More generally, on the unit ball $B^{2n} := \{ z \in \C^n \colon \sum_k | z_k |^2 <1 \}$, the Kobayashi--Royden metric is given by
$$\mathcal{K}_{(B^{2n}, \omega)}(v_z) = \frac{\sqrt{(1-|z|^2)|v|^2 + | \langle z,v \rangle |^2}}{1-|z|^2}$$
where $\langle \cdot, \cdot \rangle$ and $|\cdot|$ denote the standard Hermitian inner product on $\C^n$.
\item For convex domains, which includes the Hermitian symmetric spaces of non-compact type, the Kobayashi-Royden metric coincides with the Carath\'eodory-Reiffen metric \cite{lempert1981metrique}.
\item We have the following formula for products: $\mathcal{K}_{X \times Y}(u,v) = \max \{ \mathcal{K}_X(u), \mathcal{K}_Y(v)\}$ for $u \in TX, v \in TY$ (see, e.g., \cite[Proposition 3.5.25]{kobayashi2013hyperbolic}).
\item If $\pi \colon \widetilde{X} \to X$ is a covering space, then $\mathcal{K}_{( \widetilde{X}, \widetilde{\omega}) } = \pi^{\ast} \mathcal{K}_{(X,\omega)}$ (see, e.g., \cite[Proposition 3.5.26]{kobayashi2013hyperbolic}).
\end{itemize}
\end{example}

\begin{prop}[Euclidean balls] \label{ex:FlatElliptic} Let $\phi \in \Lambda^k(\R^n)^*$ be a constant-coefficient elliptic calibration.
\begin{enumerate}[(a)]
\item Consider the calibrated manifold $(B(p;r), g_0, \phi)$, where $B(p; r) = \{ x \in \R^n \colon |x-p| < r\}$.  For all $y \in B(p;r)$ and $v \in T_yB(p;r)$, we have $\mathcal{K}_{(B(p;r), \phi)}(v) \leq \frac{1}{r - |y-p|}|v|$.
\item Consider the calibrated manifold $(\R^n, g_0, \phi)$.  We have $\mathcal{K}_{(\R^n, \phi)}(v) = 0$ for all $v \in T\R^n$.
\end{enumerate}
\end{prop}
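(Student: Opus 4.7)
\medskip

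\noindent\textbf{Proof proposal.} The plan is to construct, for each $y \in B(p;r)$ and each $v \in T_yB(p;r)$, an explicit Smith immersion from $(B^k, g_1, \vol_1)$ whose image is a flat $\phi$-disk through $y$ tangent to $v$. Ellipticity ensures that such a disk exists; the Poincar\'{e}-to-flat conformal factor accounts for the rescaling.

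\medskip

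\noindent\emph{Step 1: Constructing the affine $\phi$-disk.} Given $y \in B(p;r)$ and a nonzero $v \in T_yB(p;r)$, use the ellipticity of $\phi$ to choose a $\phi$-plane $\xi \in \mathrm{Gr}(\phi)|_y$ with $v \in \xi$. Pick a linear isometry $L \colon \R^k \to \R^n$ whose image is (the translate to the origin of) $\xi$ and such that $L(w) = v/|v|$ for some unit $w \in \R^k$. For any $R > 0$, set
\[
f_R \colon B^k \to \R^n, \qquad f_R(x) = y + R\,L(x).
\]
When $R \leq r - |y-p|$, the triangle inequality gives $|f_R(x) - p| \leq |y-p| + R|L(x)| < r$, so $f_R$ maps $B^k$ into $B(p;r)$.

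\medskip

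\noindent\emph{Step 2: Checking the Smith conditions.} Since $L$ is a linear isometry and $L(\R^k) = \xi$ is a $\phi$-plane, one has $L^*(\phi|_\xi) = \vol_0$ on $\R^k$. A direct pullback then yields $f_R^*g_0 = R^2 g_0$ and $f_R^*\phi = R^k \vol_0$ on $B^k$. Using $g_1 = (1-|x|^2)^{-2}g_0$ and $\vol_1 = (1-|x|^2)^{-k}\vol_0$, set $\lambda(x) = R(1-|x|^2)$; then
\[
f_R^*g_0 = \lambda^2 g_1, \qquad f_R^*\phi = \lambda^k \vol_1,
\]
so $f_R \colon (B^k, g_1, \vol_1) \to (B(p;r), g_0, \phi)$ is a Smith immersion.

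\medskip

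\noindent\emph{Step 3: Applying the definition of $\mathcal{K}_\phi$.} By construction, $f_R(0) = y$ and $(df_R)_0(w) = R\,L(w) = \tfrac{R}{|v|}v$. Taking $R = r - |y-p|$ and $a = |v|/R$, we have $(df_R)_0(w) = \tfrac{1}{a}v$, so
\[
\mathcal{K}_{(B(p;r),\phi)}(v) \leq a = \frac{|v|}{r - |y-p|},
\]
proving (a). For (b), the construction works on all of $\R^n$ for \emph{every} $R > 0$, yielding $\mathcal{K}_{(\R^n,\phi)}(v) \leq |v|/R$ for arbitrarily large $R$; combined with $\mathcal{K}_\phi \geq 0$, this forces $\mathcal{K}_{(\R^n,\phi)}(v) = 0$. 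The case $v = 0$ in both parts is handled by the constant map. The only delicate point is Step 2, but it reduces to the bookkeeping of the Poincar\'{e} conformal factor already recorded in $\S$\ref{sub:Hyperbolic-Space}.
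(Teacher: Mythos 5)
Your proof is correct and follows essentially the same construction as the paper's: use ellipticity to find a $\phi$-plane through $y$ containing $v$, take the linear parametrization of an affine $\phi$-disk, and restrict it to a small enough ball. The only cosmetic differences are (i) you verify the Smith condition by an explicit conformal-factor computation, whereas the paper composes a flat Smith immersion $(\R^k,g_0,\vol_0)\to(\R^n,g_0,\phi)$ with the inclusion $(B^k,g_1,\vol_1)\hookrightarrow(\R^k,g_0,\vol_0)$ and invokes Propositions \ref{prop:Poincare-Facts}(a) and \ref{prop:Invariance}; and (ii) you take $R=r-|y-p|$ and achieve the bound directly (the open ball $B^k$ guarantees the image still lies in $B(p;r)$), whereas the paper works with all $a>|v|/(r-|y-p|)$ and passes to the infimum. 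One minor point worth making explicit: the linear isometry $L$ should be chosen orientation-preserving onto the oriented $\phi$-plane $\xi$, so that $L^*(\vol_\xi)=\vol_0$ rather than $-\vol_0$; the paper handles this by choosing $\{v/|v|,v_2,\dots,v_k\}$ as an oriented orthonormal basis of $\Pi$.
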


\begin{proof} (a) Let $y \in B(p;r)$ and $v \in T_y\R^n$.  If $v = 0$, the claim is clear, so assume $v \neq 0$.  Since $\phi$ is elliptic, there exists a $\phi$-calibrated plane $\Pi \subset T_y\R^n$ with $v \in \Pi$.  Identifying $T_y\R^n \cong \R^n$, we regard $\Pi$ as an affine $\phi$-plane through $y$ containing the vector $v$, and we let $\{\frac{v}{|v|}, v_2, \ldots, v_k\}$ be an orthonormal basis for $\Pi$.  Now, for each $a > \frac{|v|}{r - |y-p|}$, let $f_a \colon (\R^k, g_0, \vol_0) \to (\R^n, g_0, \phi)$ be
$$f_a(x_1, \ldots, x_k) = y + \frac{|v|}{a}\left( x_1 \frac{v}{|v|} + \sum_{j=2}^k x_j v_j \right)\!.$$
Note that $f_a$ is a Smith immersion whose image is $\Pi$. \\
\indent Now, let $\iota \colon (B^k, g_1, \vol_1) \to (\R^k, g_0, \vol_0)$ be the inclusion map, and consider the map $F_a := f_a \circ \iota \colon (B^k, g_1, \vol_1) \to (\R^n, g_0, \phi)$.  Note that $F_a$ is a Smith immersion with $F_a(0) = y$ and $(dF_a)|_0(e_1) = \frac{1}{a}v$, and $\mathrm{Image}(F_a) \subset B(y; \frac{|v|}{a}) \subset B(y; r - |y-p|) \subset B(p;r)$.  This shows that $\mathcal{K}_{(B(p;r), \phi)}(v) \leq \frac{1}{r-|y-p|}|v|$. \\
\indent (b) The same argument applies, but now there is no restriction on $a > 0$.
\end{proof}

\begin{example}[Volume form calibration] \label{ex:Vol-Form-KR} Let $(X, g_X, \vol_X)$ be a calibrated $n$-manifold equipped with the volume form calibration $\vol_X \in \Omega^n(X)$.  If $(X, g_X)$ is not locally conformally flat, then $\mathcal{K}_{(X, \vol_X)}(v) = \infty$ for all $v \in TX$, $v \neq 0$.
\end{example}

\begin{proof} We prove the contrapositive.  Let $p \in X$ and $v \in T_pX$, $v \neq 0$ be such that $\mathcal{K}_{(X, \vol_X)}(v) < \infty$.  Then there exists a Smith immersion $f \colon (B^n, g_1, \vol_1) \to (X^n, g_X, \vol_X)$, a vector $w \in T_0B^n$ with $|w| = 1$, and $a > 0$ such that $f(0) = p$ and $df_0(w) = \frac{1}{a}v$.  Noting that $f$ is strict at $0 \in B^n$, there exists a neighborhood $U \subset B^n$ of $0$ for which $f|_U \colon (U, g_1, \vol_1) \to (X^n, g_X, \vol_X)$ is a conformal diffeomorphism.  Thus, $(X,g_X)$ is locally conformally flat.
\end{proof}

\indent We now compute the KR $\vol_1$-metric of real hyperbolic space $(B^n, g_1)$.

\begin{thm} \label{thm:K-Poincare} Let $(B^n, g_1)$ be the unit $n$-ball with its Poincar\'{e} metric, and equip it with the volume form calibration $\vol_1 \in \Omega^n(B^n)$.  Then for all $p \in B^n$ and $v \in T_pB^n$, we have
$$\mathcal{K}_{(B^n, \vol_1)}(v) = |v|_1.$$
\end{thm}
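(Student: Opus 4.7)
The plan is to prove equality by separately establishing the inequalities $\mathcal{K}_{(B^n,\vol_1)}(v) \leq |v|_1$ and $\mathcal{K}_{(B^n,\vol_1)}(v) \geq |v|_1$. Both sides vanish when $v = 0$, so I assume $v \neq 0$ throughout. Both inequalities exploit essential features of the Poincar\'{e} ball: its isometric homogeneity and its constant sectional curvature.

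For the upper bound, I would first treat the case $p = 0$. The identity map $\mathrm{Id} \colon (B^n, g_1, \vol_1) \to (B^n, g_1, \vol_1)$ is an orientation-preserving isometry and hence a Smith immersion with $\mathrm{Id}(0) = 0$. Given $v \in T_0B^n$, set $w = v/|v|_1$ and $a = |v|_1$; then $|w|_1 = 1$ and $d(\mathrm{Id})_0(w) = \tfrac{1}{a}v$, which gives $\mathcal{K}_{(B^n,\vol_1)}(v) \leq |v|_1$. For a general point $p \in B^n$, I would invoke Proposition \ref{prop:Poincare-Facts}(c): there exists an isometric conformal automorphism $\Phi \colon (B^n, g_1) \to (B^n, g_1)$ with $\Phi(0) = p$. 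Since $\Phi$ is an orientation-preserving isometry, it is a Smith equivalence of $(B^n, g_1, \vol_1)$ with itself, and invariance (Corollary \ref{cor:Invariance}) gives $\mathcal{K}_{(B^n,\vol_1)}(v) = \mathcal{K}_{(B^n,\vol_1)}(d\Phi_0^{-1}(v)) \leq |d\Phi_0^{-1}(v)|_1 = |v|_1$.

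For the lower bound, the key input is the Schwarz Lemma (Theorem \ref{thm:SchwarzMain}(a)). Let $f \colon (B^n, g_1, \vol_1) \to (B^n, g_1, \vol_1)$ be any Smith immersion, $w \in T_0B^n$ a unit vector, and $a > 0$ with $f(0) = p$ and $df_0(w) = \tfrac{1}{a}v$. By Smith's theorem, $f$ is weakly conformal and $n$-harmonic. The Poincar\'{e} ball is complete, and $(B^n, g_1)$ has constant sectional curvature $-4$ with $\mathrm{Scal}_{B^n} = -4n(n-1)$ and $\mathrm{Ric}_{B^n} = -4(n-1)g_1$. Applying Theorem \ref{thm:SchwarzMain}(a) with $k = n$, $S = 4n(n-1)$ and $B = 4$ yields
\[
|df|^2 \leq \frac{S}{(k-1)B} = \frac{4n(n-1)}{4(n-1)} = n,
\]
so the conformal factor $\lambda = \frac{1}{\sqrt{n}}|df|$ satisfies $\lambda \leq 1$ everywhere. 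Therefore
\[
\tfrac{1}{a}|v|_1 = |df_0(w)|_1 = \lambda(0)\,|w|_1 \leq 1,
\]
so $a \geq |v|_1$. Taking the infimum over all admissible triples $(f, w, a)$ gives $\mathcal{K}_{(B^n,\vol_1)}(v) \geq |v|_1$, completing the proof.

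The main point, and the only substantive step, is the Schwarz lemma estimate. The sharp constants $S = 4n(n-1)$ and $B = 4$ for constant curvature $-4$ balance so that $|df|^2 \leq n$, i.e., $\lambda \leq 1$; any strict negativity mismatch (say target with $\mathrm{Sec} \leq -4'$ for $4' > 4$) would yield a strict inequality. The reduction to $p = 0$ via the transitive isometric action is routine once invariance under Smith equivalences is available. The case $n = 1$ can be treated separately by an elementary Schwarz--Pick argument if needed, since the bound $(k-1)B$ degenerates at $k = 1$.
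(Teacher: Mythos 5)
Your proof is correct and takes essentially the same route as the paper: upper bound via the identity map, lower bound via a Schwarz-lemma estimate forcing conformal factor $\leq 1$, and reduction to $p=0$ by the transitive action of conformal automorphisms together with invariance. The only cosmetic difference is that you apply Theorem \ref{thm:SchwarzMain}(a) directly with $S=4n(n-1)$, $B=4$, whereas the paper cites the ``classic Schwarz lemma'' Proposition \ref{thm:ClassicSchwarz}(a), which is itself derived from exactly that application.
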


\begin{proof} Suppose first that $p = 0$, and fix $v \in T_0B^n$.  If $v = 0$, the result is clear, so assume $v \neq 0$.  Notice that the identity map $\mathrm{Id} \colon (B^n, g_1, \vol_1) \to (B^n, g_1, \vol_1)$ is a Smith immersion with $\mathrm{Id}(0) = 0$ and $d\,\mathrm{Id}_0(\frac{v}{|v|_1}) = \frac{1}{|v|_1}v$.  This shows that $\mathcal{K}_{(B^n, \vol_1)}(v) \leq |v|_1$. \\
\indent In the other direction, for each $a > 0$, let $f \colon (B^n, g_1, \vol_1) \to (B^n, g_1, \vol_1)$ be a Smith immersion, and let $w \in T_0B^k$ with $|w| = 1$ be such that $f(0) = 0$ and $df_0(w) = \frac{1}{a}v$.  By the classic Schwarz lemma (Proposition \ref{thm:ClassicSchwarz}(a)), we have $\frac{1}{a}|v|_1 = |df_0(w)|_1 \leq 1$, so that $|v|_1 \leq a$.  Therefore, we deduce that $|v|_1 \leq \mathcal{K}_{(B^n, \vol_1)}(v)$.  This proves the result at the origin $p = 0$. \\
\indent Finally, let $p \in B^n$ be arbitrary, and let $v \in T_pB^n$.  The Smith automorphisms of $(g_1, \vol_1)$ coincide with the orientation-preserving conformal automorphisms of $g_1$.  Since this group acts transitively on $B^n$ (Proposition \ref{prop:Poincare-Facts}), there exists a Smith automorphism $F \colon (B^n, g_1) \to (B^n, g_1)$ such that $F(p) = 0$.  Now, using invariance (Corollary \ref{cor:Invariance}), followed by the result for $p = 0$, followed by the fact that conformal automorphisms of $B^n$ are isometries for the Poincar\'{e} metric (Proposition \ref{prop:Poincare-Facts}), we have
$$\mathcal{K}_{(B^n, \vol_1)}(v) = \mathcal{K}_{(B^n, \vol_1)}(dF_p(v)) = \left| dF_p(v) \right|_1 = \left| v \right|_1.$$
\end{proof}

\begin{cor} \label{cor:Dec2} Let $f \colon (B^k, g_1, \vol_1) \to (X^n, g_X, \phi)$ be a Smith immersion, $\phi \in \Omega^k(X)$.  For all $p \in B^k$ and $v \in T_pB^k$, we have
$$\mathcal{K}_{(X,\phi)}(df_p(v)) \leq |v|_1.$$
\end{cor}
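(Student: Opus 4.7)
The plan is to combine the two main tools already developed in this section: the decreasing property of the KR $\phi$-metric under conformally calibrating maps (Proposition \ref{prop:Dec1}), and the explicit calculation of the KR $\vol_1$-metric on real hyperbolic space (Theorem \ref{thm:K-Poincare}). Both apply directly with no modification required.

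First, I would observe that a Smith immersion $f \colon (B^k, g_1, \vol_1) \to (X^n, g_X, \phi)$ is, by definition, a conformally calibrating map, since the domain calibration $\vol_1 \in \Omega^k(B^k)$ and the codomain calibration $\phi \in \Omega^k(X)$ have the same degree $k$. Applying Proposition \ref{prop:Dec1} with $(X^m, g_X, \alpha) = (B^k, g_1, \vol_1)$ and $(Y^n, g_Y, \beta) = (X^n, g_X, \phi)$ yields, for every $p \in B^k$ and every $v \in T_pB^k$, the inequality
\[
\mathcal{K}_{(X,\phi)}(df_p(v)) \;\leq\; \mathcal{K}_{(B^k,\vol_1)}(v).
\]

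Second, Theorem \ref{thm:K-Poincare} (applied with $n = k$) identifies the right-hand side: $\mathcal{K}_{(B^k,\vol_1)}(v) = |v|_1$. Chaining these two statements gives the desired bound $\mathcal{K}_{(X,\phi)}(df_p(v)) \leq |v|_1$. There is no real obstacle here — the corollary is essentially a packaged consequence of the preceding theorem and proposition, intended to serve as a convenient form of the decreasing property for later use (presumably in the proof of $R_\phi$-hyperbolicity or in curvature-based hyperbolicity arguments). The entire proof should fit in one or two lines.
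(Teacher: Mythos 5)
Your proof is correct and matches the paper's exactly: the paper's own proof is the single line ``Combine Proposition \ref{prop:Dec1} and Theorem \ref{thm:K-Poincare},'' which is precisely the chain of inequalities you spell out.
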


\begin{proof} Combine Proposition \ref{prop:Dec1} and Theorem \ref{thm:K-Poincare}.
\end{proof}

\indent We now calculate the KR $\Psi$-metric of quaternionic hyperbolic space $\mathbb{HH}^n$, where $\Psi$ is the QK $4$-form.  Our convention is that $\mathbb{HH}^n$ has sectional curvature between $-4$ and $-1$.

\begin{thm} \label{thm:Quat-Hyp} Let $(\mathbb{HH}^n, g_{\mathbb{HH}^n})$ be quaternionic hyperbolic space, and let $\Psi \in \Omega^4(\mathbb{HH}^n)$ be the quaternionic-K\"{a}hler calibration.  Then for all $p \in \mathbb{HH}^n$ and $v \in T_p\mathbb{HH}^n$, we have
$$\mathcal{K}_{(\mathbb{HH}^n, \Psi)}(v) = \left|v\right|_{\mathbb{HH}^n},$$
where $|v|_{\mathbb{HH}^n} = \sqrt{ g_{\mathbb{HH}^n}(v,v)}$ is the norm arising from the Riemannian metric.
\end{thm}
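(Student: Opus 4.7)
The plan is to establish matching upper and lower bounds on $\mathcal{K}_{(\mathbb{HH}^n, \Psi)}(v)$, mirroring the strategy of Theorem~\ref{thm:K-Poincare}.  The geometric key is that, in the paper's curvature normalization, a quaternionic line $\mathbb{HH}^1 \subset \mathbb{HH}^n$ is a totally geodesic, $\Psi$-calibrated $4$-submanifold of constant sectional curvature $-4$, hence isometric to $(B^4, g_1)$.  For the upper bound, given $p \in \mathbb{HH}^n$ and nonzero $v \in T_p \mathbb{HH}^n$, I would take $\mathbb{HH}^1_{p,v} \subset \mathbb{HH}^n$ to be the quaternionic line through $p$ containing $v$, and let $f \colon (B^4, g_1) \to \mathbb{HH}^1_{p,v}$ be an isometry with $f(0) = p$.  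By Proposition~\ref{prop:SmithParamSub}, the composition of $f$ with the inclusion into $\mathbb{HH}^n$ is a Smith immersion into $(\mathbb{HH}^n, g_{\mathbb{HH}^n}, \Psi)$; setting $w := df_0^{-1}(v / |v|_{\mathbb{HH}^n})$ produces a unit vector (as $df_0$ is an isometry) with $df_0(w) = \tfrac{1}{|v|_{\mathbb{HH}^n}} v$, proving $\mathcal{K}_{(\mathbb{HH}^n, \Psi)}(v) \leq |v|_{\mathbb{HH}^n}$.  Unlike Theorem~\ref{thm:K-Poincare}, no transitivity argument is needed, as this construction depends directly on $(p, v)$.

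For the lower bound, I would apply Theorem~\ref{thm:SchwarzMain}(b) to an arbitrary Smith immersion $f \colon (B^4, g_1, \vol_1) \to (\mathbb{HH}^n, g_{\mathbb{HH}^n}, \Psi)$.  The domain has constant sectional curvature $-4$, so it is complete, its Ricci curvature is bounded below, and $\mathrm{Scal}_{B^4} = 4 \cdot 3 \cdot (-4) = -48$, giving $S = 48$.  On the target, each $\Psi$-plane $\xi$ is a quaternionic line with all $2$-plane sectional curvatures equal to $-4$; summing over the $12$ ordered pairs $(i,j)$ with $i \neq j$ in an orthonormal basis of $\xi$ gives $\mathrm{Sec}^\Psi_{\mathbb{HH}^n}(\xi) = 12 \cdot (-4) = -48$, so $A = 48$.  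Theorem~\ref{thm:SchwarzMain}(b) then yields $|df|^2 \leq \tfrac{kS}{A} = 4$, and Lemma~\ref{lem:BasicDistance} with $D = 4$ and $k = 4$ gives $|df_0(w)|_{\mathbb{HH}^n} \leq 1$ for every unit $w \in T_0 B^4$.  Hence any admissible triple $(f, w, a)$ with $f(0) = p$ and $df_0(w) = \tfrac{1}{a} v$ must satisfy $a \geq |v|_{\mathbb{HH}^n}$, proving the reverse inequality upon taking the infimum.

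The most delicate point, which one might worry about \emph{a priori}, is the exact numerical match $kS/A = 1$ produced by the Schwarz lemma.  This is not a coincidence but reflects the fact that a single geometric object --- the totally geodesic quaternionic line isometric to $(B^4, g_1)$ --- simultaneously furnishes the extremal Smith disk for the upper bound and saturates the curvature bound $\mathrm{Sec}^\Psi \leq -A$ used in the lower bound.  Once the curvature bookkeeping is verified, the remainder of the argument is mechanical.
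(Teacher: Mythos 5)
Your proof is correct and follows essentially the same strategy as the paper's: an upper bound from an isometrically parametrized quaternionic line, and a lower bound from the Schwarz Lemma (Theorem \ref{thm:SchwarzMain}(b)) with $S = A = 48$.  The only difference is cosmetic — you prove both bounds directly at an arbitrary $(p,v)$ by choosing the quaternionic line through $p$ tangent to $v$, whereas the paper first treats $p = 0$ and then invokes the transitive $\mathrm{Sp}(n,1)$-action together with the invariance property (Corollary \ref{cor:Invariance}); both routes are valid and of comparable length.
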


\begin{proof} Suppose first that $p = 0$, and fix $v \in T_0\mathbb{HH}^n$.  If $v = 0$, the result is clear, so assume $v \neq 0$. \\
\indent Let $\iota \colon \mathbb{HH}^1 \to (\mathbb{HH}^n, g_{\mathbb{HH}^n}, \Psi)$ be the standard inclusion of a quaternionic hyperbolic line through $0$.  Note that the image is isometric to $(B^4(1), g_1)$ and is $\Psi$-calibrated.  Therefore, equipping $\mathbb{HH}^1$ with the pullback metric and $4$-form $(\iota^*g_{\mathbb{HH}^n}, \iota^*\Psi) = (g_{\mathbb{HH}^1}, \vol_{\mathbb{HH}^1}) \cong (g_1, \vol_1)$ makes $\iota$ into an isometric Smith immersion.  Moreover, with our choice of metric conventions, we have $|v|_{\mathbb{HH}^n} = |v|_1$ for vectors $v \in T_0\mathbb{HH}^n \cong T_0B^4$, and so $d\iota_0(\frac{v}{|v|_1}) = \frac{1}{|v|_{\mathbb{HH}^n}}v$.  This shows that $\mathcal{K}_{(\mathbb{HH}^n, \Psi)}(v) \leq |v|_{\mathbb{HH}^n}$. \\
\indent In the other direction, for $a > 0$, let $f \colon (B^4, g_1, \vol_1) \to (\mathbb{HH}^n, g_{\mathbb{HH}^n}, \Psi)$ be a Smith immersion, and let $w \in T_0B^4$ with $|w| = 1$ be such that $f(0) = 0$ and $df_0(w) = \frac{1}{a}v$.  Note that $S := -\mathrm{Scal}_{(B^4, g_1)} = 48$ and $A := -\mathrm{Sec}^\Psi_{\mathbb{HH}^n} = 48$.  Therefore, the Schwarz Lemma of Theorem \ref{thm:SchwarzMain}(b) implies that $|df|^2 \leq 4$, so the basic distance estimate \ref{lem:BasicDistance} gives $\frac{1}{a}|v|_{\mathbb{HH}^n} = |df_0(w)| \leq 1$.  Thus, we deduce that $|v|_{\mathbb{HH}^n} \leq \mathcal{K}_{(\mathbb{HH}^n, \Psi)}(v)$.  This proves the result at $p = 0$. \\
\indent Finally, let $p \in \mathbb{HH}^n$ be arbitrary, and let $v \in T_p\mathbb{HH}^n$.  Recalling that $\mathbb{HH}^n = \Sp(n,1)/(\Sp(n) \times \Sp(1))$, there exists $A \in \mathrm{Sp}(n,1)$ such that $A \cdot p = 0$, where $\cdot$ denotes the $\Sp(n,1)$-action on $\mathbb{HH}^n$.  The map $F \colon \mathbb{HH}^n \to \mathbb{HH}^n$ given by $F(x) = A \cdot x$ satisfies $F(p) = 0$ and is a Gray automorphism (i.e., an isometric Smith automorphism).  Thus, using invariance (Corollary \ref{cor:Invariance}), followed by the result for $p = 0$, followed by the fact that Gray automorphisms are isometries, we have
$$\mathcal{K}_{(\mathbb{HH}^n, \Psi)}(v) = \mathcal{K}_{(\mathbb{HH}^n, \Psi)}(dF_p(v)) = \left| dF_p(v) \right|_{\mathbb{HH}^n} = |v|_{\mathbb{HH}^n}.$$
\end{proof}

\section{$R_\phi$-hyperbolicity and $K_\phi$-hyperbolicity} \label{sec:RphiKphi}

\indent \indent In this section, we will use the KR $\phi$-metric to define two notions of hyperbolicity, both of which generalize Kobayashi hyperbolicity.  The first, called $R_\phi$-hyperbolicity, is defined by a lower bound on the KR $\phi$-metric.  In $\S$\ref{sub:Rphi}, we will prove that strongly negative $\phi$-sectional curvature implies $R_\phi$-hyperbolicity, which in turn implies $\phi$-hyperbolicity (Theorem \ref{thm:RImpliesPhi}).  We will also provide examples of $\phi$-hyperbolic domains that are not $R_\phi$-hyperbolic (Theorem \ref{thm:CounterConverse}). \\
\indent In $\S$\ref{sub:KobDist}, we extend the Kobayashi pseudo-distance to calibrated manifolds $(X, g_X, \phi)$ having the property that its KR $\phi$-metric $\mathcal{K}_{(X,\phi)} \colon TX \to [0,\infty]$ is finite and upper semicontinuous.  We shall say that $X$ is $K_\phi$-hyperbolic if the pseudo-distance obtained from integrating $\mathcal{K}_{(X,\phi)}$ is a genuine distance function.  In Theorem \ref{thm:K-implies-Phi}, we shall prove that (under the semicontinuity assumption) $R_\phi$-hyperbolicity implies $K_\phi$-hyperbolicity, and that $K_\phi$-hyperbolicity implies $\phi$-hyperbolicity.

\subsection{$R_\phi$-hyperbolicity} \label{sub:Rphi}

\begin{defn} Let $(X, g_X, \phi)$ be a calibrated manifold.  Say $(X, g_X)$ is \emph{$R_\phi$-hyperbolic} if for each $p \in X$, there exists a neighborhood $U \subset X$ of $p$ and a constant $c > 0$ such that $\mathcal{K}_{(X,\phi)}(v) \geq c \left|v\right|_X$ for all $v \in TU$.
\end{defn}

\indent When $(X, g, \omega)$ is a K\"{a}hler manifold, Royden \cite{royden2006remarks} proved that $R_\omega$-hyperbolicity is equivalent to Kobayashi hyperbolicity.

\begin{prop} Let $(X, g_X, \phi)$ be a calibrated manifold, $\phi \in \Omega^k(X)$.
\begin{enumerate}[(a)]
\item Suppose $(X, g_X, \phi)$ is Smith equivalent to $(Y, g_Y, \psi)$.  Then $(X, g_X)$ is $R_\phi$-hyperbolic if and only if $(Y, g_Y)$ is $R_\psi$-hyperbolic.
\item Let $U \subset X$ be an open subset.  If $(X, g_X)$ is $R_\phi$-hyperbolic, then $(U, g_X|_U)$ is $R_\phi$-hyperbolic.
\end{enumerate}
\end{prop}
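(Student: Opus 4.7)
The plan is to reduce both statements directly to the invariance and comparison properties of the KR $\phi$-metric established in the previous subsection.

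For part (a), let $F \colon (X, g_X, \phi) \to (Y, g_Y, \psi)$ be a Smith equivalence with conformal factor $\lambda \colon X \to [0, \infty)$. Because $F$ is a diffeomorphism, each $dF_p$ is an isomorphism, so $\lambda$ is strictly positive everywhere; moreover, the identity $\lambda^2 g_X = F^*g_Y$ shows $\lambda$ is continuous. Two ingredients then combine: Corollary \ref{cor:Invariance} gives the equality $\mathcal{K}_{(Y,\psi)}(dF_p(v)) = \mathcal{K}_{(X,\phi)}(v)$, and the conformal relation gives the norm transformation $|dF_p(v)|_{g_Y} = \lambda(p)\,|v|_{g_X}$. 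Assuming $X$ is $R_\phi$-hyperbolic and given $q \in Y$, I would fix $p = F^{-1}(q)$, take a relatively compact neighborhood $U$ of $p$ on which $\mathcal{K}_{(X,\phi)}(v) \geq c|v|_{g_X}$ holds and on which $\lambda \leq M < \infty$, and then for any $w = dF_{p'}(v) \in T(F(U))$ compute
\[
\mathcal{K}_{(Y,\psi)}(w) = \mathcal{K}_{(X,\phi)}(v) \geq c|v|_{g_X} = \frac{c}{\lambda(p')}|w|_{g_Y} \geq \frac{c}{M}|w|_{g_Y},
\]
yielding $R_\psi$-hyperbolicity at $q$. The reverse implication follows from the same argument applied to $F^{-1}$, which is itself a Smith equivalence.

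For part (b), the comparison principle (Corollary \ref{cor:Comparison-K}) gives $\mathcal{K}_{(U,\phi)}(v) \geq \mathcal{K}_{(X,\phi)}(v)$ for all $v \in TU$. So given $p \in U$, I would take a neighborhood $W \subset X$ of $p$ and a constant $c > 0$ witnessing $R_\phi$-hyperbolicity of $X$, set $W' := W \cap U$, and conclude immediately that $\mathcal{K}_{(U,\phi)}(v) \geq \mathcal{K}_{(X,\phi)}(v) \geq c|v|_{g_X|_U}$ on $TW'$, using that the restricted metric induces the same pointwise norm as $g_X$ on each tangent space in $U$.

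There is essentially no analytic obstacle; the only point requiring care is the local boundedness of the conformal factor $\lambda$ in part (a), which is automatic from continuity and strict positivity by passing to a relatively compact neighborhood. Everything else is a formal manipulation of the two properties \textbf{(Invariance)} and \textbf{(Comparison)} of $\mathcal{K}_\phi$.
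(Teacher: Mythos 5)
Your proof is correct and follows essentially the same route as the paper's: for (a) you combine the invariance of the KR $\phi$-metric under Smith equivalences with local boundedness of the conformal factor on a relatively compact neighborhood, exactly as the paper does, and for (b) you apply the comparison principle directly. No substantive differences.
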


\begin{proof} ${}$
\begin{enumerate}[(a)]
\item Suppose $(X, g_X)$ is $R_\phi$-hyperbolic.  Let $F \colon (X, g_X, \phi) \to (Y, g_Y, \psi)$ be a Smith equivalence, and let $\lambda \colon X \to \R^+$ be the conformal factor.  Fix $p \in Y$.  By $R_\phi$-hyperbolicity, there is a neighborhood $U \subset X$ of $F^{-1}(p)$ and $c > 0$ such that $\mathcal{K}_{(X,\phi)}(w) \geq c |w|_X$ for all $w \in TU$.  By shrinking $U$ if necessary, we can assume that $\overline{U}$ is compact, so that $\sup_{\overline{U}} \lambda $ is finite.  Now, for all $v \in T(F(U))$, we have
\begin{align*}
\mathcal{K}_{(Y, \psi)}(v) = \mathcal{K}_{(X,\phi)}(dF^{-1}(v)) \geq c \left| dF^{-1}(v) \right|_X = \frac{c}{\lambda} \left| v \right|_Y \geq \frac{c}{\sup_{\overline{U}} \lambda} \left| v \right|_Y.
\end{align*}
\item This follows immediately from the Comparison Principle (Corollary \ref{cor:Comparison-K}).
\end{enumerate}
\end{proof}

% \pagebreak

\indent In the K\"{a}hler case, it is well-known that strongly negative holomorphic sectional curvature implies Kobayashi hyperbolicity, and that Kobayashi hyperbolicity implies Brody hyperbolicity.  We now show that both of these implications hold in general:

\begin{thm} \label{thm:RImpliesPhi} Let $(X, g_X, \phi)$ be a calibrated manifold.
\begin{enumerate}[(a)]
\item If $\mathrm{Sec}^\phi_X \leq -A$ for some $A > 0$, then $(X, g_X)$ is $R_\phi$-hyperbolic.
\item If $(X, g_X)$ is $R_\phi$-hyperbolic, then $(X, g_X)$ is $\phi$-hyperbolic.
\end{enumerate}
\end{thm}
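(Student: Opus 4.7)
The plan is to handle (a) and (b) separately. Part (a) will follow quickly from the Smith-immersion Schwarz Lemma applied to the Poincar\'e ball, while (b) will proceed by a Brody-style rescaling argument adapted to the calibrated setting.

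For (a), I will fix $p \in X$ and consider an arbitrary Smith immersion $f \colon (B^k, g_1, \vol_1) \to (X, g_X, \phi)$ with $f(0) = p$. The domain $(B^k, g_1)$ is complete with constant sectional curvature $-4$ by Proposition \ref{prop:Poincare-Facts}, so its Ricci tensor is bounded below by $-4(k-1)g_1$ and its scalar curvature equals $-4k(k-1)$. Applying Theorem \ref{thm:SchwarzMain}(b) with $S = 4k(k-1)$ and the hypothesis $\mathrm{Sec}^\phi_X \leq -A$ yields the uniform bound $|df|^2 \leq 4k^2(k-1)/A$, so the basic distance estimate of Lemma \ref{lem:BasicDistance} gives $|df_0(w)|_X \leq 2\sqrt{k(k-1)/A}$ for every unit $w \in T_0 B^k$. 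Consequently, whenever $df_0(w) = \tfrac{1}{a}v$ in the definition of $\mathcal{K}_{(X,\phi)}(v)$, we must have $a \geq \tfrac{1}{2}\sqrt{A/(k(k-1))}\,|v|_X$, so taking the infimum produces $\mathcal{K}_{(X,\phi)}(v) \geq c|v|_X$ globally with $c = \tfrac{1}{2}\sqrt{A/(k(k-1))}$, which is stronger than the local bound required for $R_\phi$-hyperbolicity.

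For (b), I will argue the contrapositive: starting from a non-constant Smith immersion $f \colon (\R^k, g_0, \vol_0) \to (X, g_X, \phi)$, I will show $X$ fails to be $R_\phi$-hyperbolic. Since the rank of $df$ is everywhere either $0$ or $k$, non-constancy plus connectedness forces $df_p \neq 0$ at some point; after pre-composing with a translation (an isometry of $(\R^k, g_0)$, hence a Smith equivalence) I may assume $p = 0$, and I fix a unit $v \in T_0\R^k$ with $v' := df_0(v) \neq 0$. For each $R > 0$, I form the Smith immersion $f_R \colon (B^k, g_1, \vol_1) \to (X, g_X, \phi)$ defined by $f_R(x) = f(Rx)$, realized as the three-fold composition $f \circ \iota_R \circ \psi_R$, where $\psi_R(x) = Rx$ is the isometry $(B^k, g_1, \vol_1) \to (B^k(R), g_R, \vol_R)$ and $\iota_R \colon (B^k(R), g_R, \vol_R) \hookrightarrow (\R^k, g_0, \vol_0)$ is the local Smith equivalence of Proposition \ref{prop:Poincare-Facts}(a). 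Proposition \ref{prop:Composition} ensures $f_R$ is a Smith immersion, and the chain rule at the origin gives $(df_R)_0(v) = R\,df_0(v) = R\,v'$. Since $|v|_1 = |v|_0 = 1$, the definition of $\mathcal{K}_{(X,\phi)}$ yields $\mathcal{K}_{(X,\phi)}(v') \leq 1/R$ for every $R > 0$, hence $\mathcal{K}_{(X,\phi)}(v') = 0$; this contradicts the lower bound $\mathcal{K}_{(X,\phi)}(v') \geq c|v'|_X > 0$ forced by $R_\phi$-hyperbolicity on a neighborhood of $f(0)$.

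The principal obstacle is modest and essentially bookkeeping: verifying that $\psi_R$ really is an isometry of the Poincar\'e metrics on the two balls and that the various conformal factors cooperate so that the chain rule delivers the crucial factor of $R$ at the origin of $f_R$. Once that computation is executed, both halves of the theorem reduce to citations of Theorem \ref{thm:SchwarzMain}, Lemma \ref{lem:BasicDistance}, and Proposition \ref{prop:Composition}, combined with unwinding the definition of $\mathcal{K}_{(X,\phi)}$.
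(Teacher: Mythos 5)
Your proof is correct. Part (a) coincides with the paper's argument: apply the Schwarz Lemma (Theorem \ref{thm:SchwarzMain}(b)) on $(B^k,g_1)$ with $S = 4k(k-1)$ to obtain $|df_0|^2 \leq 4k^2(k-1)/A$, then unwind the definition of $\mathcal{K}_{(X,\phi)}$ to get the uniform lower bound $\mathcal{K}_{(X,\phi)}(v) \geq \sqrt{A/(4k(k-1))}\,|v|_X$. In part (b), the paper instead cites the decreasing property (Proposition \ref{prop:Dec1}) together with the identity $\mathcal{K}_{(\R^k,\vol_0)}\equiv 0$ (Proposition \ref{ex:FlatElliptic}(b)) to conclude $\mathcal{K}_{(X,\phi)}(df_p(v)) \leq \mathcal{K}_{(\R^k,\vol_0)}(v) = 0$. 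Your rescaling $f_R = f \circ \iota_R \circ \psi_R$ is a direct, self-contained unfolding of the same mechanism: Proposition \ref{ex:FlatElliptic}(b) is itself proved by scaling linear Smith disks, and the decreasing property is what lets the paper transport that vanishing along $f$. Both routes are valid and essentially equivalent in content; the paper's version is more modular and reusable, while yours bypasses the intermediate citation and in particular does not need to invoke ellipticity of $\vol_0$.
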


\begin{proof} ${}$
\begin{enumerate}[(a)]
\item Suppose $\mathrm{Sec}^\phi_X \leq -A$ for some $A > 0$.  Let $p \in X$, let $v \in T_pX$ have $v \neq 0$, and let $\xi \in \mathrm{Gr}(\phi)$ have $v \in \xi$.  By the Schwarz Lemma of Theorem \ref{thm:SchwarzMain}(b), every Smith immersion $f \colon (B^k, g_1, \vol_1) \to (X, g_X, \phi)$ with $f(0) = p$ and $df_0(T_0B^k) = \xi$ satisfies $\Vert df_0 \Vert^2 = \frac{1}{k}|df_0|^2 \leq \frac{4k(k-1)}{A}$, and hence $\frac{1}{\Vert df_0\Vert } \geq \sqrt{\frac{A}{4k(k-1)}}$.   Consequently, $\mathcal{K}_{(X,\phi)}(v) \geq \sqrt{ \frac{A}{4k(k-1)} }\, |v|_X$, and thus $X$ is $R_\phi$-hyperbolic.
\item Suppose $X$ is $R_\phi$-hyperbolic.  Let $f \colon (\R^k, g_0, \vol_0) \to (X, g_X, \phi)$ be a Smith immersion.  Fix $p \in \R^k$ and $v \in T_p \R^k$.  By assumption, there exists a constant $c > 0$ such that $\mathcal{K}_{(X,\phi)}(df_p(v)) \geq c |df_p(v)|_X$.  On the other hand, by the decreasing property (Proposition \ref{prop:Dec1}), we have
$$c |df_p(v)|_X \leq \mathcal{K}_{(X, \phi)}(df_p(v)) \leq \mathcal{K}_{(\R^k, \vol_0)}(v) = 0,$$
where in the last step we used Example \ref{ex:FlatElliptic}(b).  Thus, $|df_p(v)|_X = 0$, which implies that $df_p(v) = 0$.  Thus, $f$ is constant, so $X$ is $\phi$-hyperbolic.
\end{enumerate}
\end{proof}

\begin{cor}[Hyperbolic spaces are $R_\phi$-hyperbolic] \label{prop:HypSpacesRphi} ${}$
\begin{enumerate}[(a)]
\item Real hyperbolic space $(B^n, g_1)$ is $R_{\vol_1}$-hyperbolic.
\item Complex hyperbolic space $(\mathbb{CH}^n, g_{\mathbb{CH}^n})$ is $R_{\omega}$-hyperbolic, where $\omega \in \Omega^2(\mathbb{CH}^n)$ is the K\"{a}hler calibration.
\item Quaternionic hyperbolic space $(\mathbb{HH}^n, g_{\mathbb{HH}^n})$ is $R_{\Psi}$-hyperbolic, where $\Psi \in \Omega^4(\mathbb{HH}^n)$ is the QK calibration.
\end{enumerate}
\end{cor}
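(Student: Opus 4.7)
The plan is to derive all three parts from results already established in the paper, so the argument is essentially a matter of assembling previously proved facts; I anticipate no significant obstacle. The key observation is that we can argue either via an explicit formula for the KR $\phi$-metric, or by verifying a strongly negative $\phi$-sectional curvature bound and invoking Theorem \ref{thm:RImpliesPhi}(a).

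For parts (a) and (c), the KR $\phi$-metric has been computed pointwise in Theorems \ref{thm:K-Poincare} and \ref{thm:Quat-Hyp}: in both cases one has $\mathcal{K}_{(X,\phi)}(v) = |v|_X$. Consequently the $R_\phi$-hyperbolicity condition $\mathcal{K}_{(X,\phi)}(v) \geq c|v|_X$ is satisfied globally on $X$ with the uniform constant $c = 1$, which immediately establishes (a) for real hyperbolic space and (c) for quaternionic hyperbolic space without any further work.

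For part (b), the plan is to invoke Theorem \ref{thm:RImpliesPhi}(a). Complex hyperbolic space $(\mathbb{CH}^n, g_{\mathbb{CH}^n})$ carries constant negative holomorphic sectional curvature, and Example \ref{example:Sectional} identifies $\mathrm{Sec}^\omega$ on a K\"{a}hler manifold with the holomorphic sectional curvature evaluated on $\omega$-planes (i.e., complex lines). Therefore $\mathrm{Sec}^\omega_{\mathbb{CH}^n} \leq -A$ for some $A > 0$, and Theorem \ref{thm:RImpliesPhi}(a) delivers $R_\omega$-hyperbolicity.

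As a remark, the same curvature-based strategy would uniformly recover (a) and (c) as well, bypassing the explicit KR-metric computations: on $(B^n, g_1)$, the curvature $\mathrm{Sec}^{\vol_1}$ is the scalar curvature $-4n(n-1) < 0$ (see Example \ref{example:Sectional}), and on $\mathbb{HH}^n$ the calculation already embedded in the proof of Theorem \ref{thm:Quat-Hyp} records $\mathrm{Sec}^\Psi = -48$; Theorem \ref{thm:RImpliesPhi}(a) then applies in each case. Either approach completes the corollary.
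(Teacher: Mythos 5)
Your proposal is correct. The paper gives no explicit proof, but the placement as a corollary immediately after Theorem~\ref{thm:RImpliesPhi} makes clear that the intended route is the curvature one for all three cases: real, complex, and quaternionic hyperbolic space each carry $\mathrm{Sec}^\phi \leq -A$ for appropriate $A>0$ (scalar curvature $-4n(n-1)$ on $(B^n,g_1)$, constant negative holomorphic sectional curvature on $\mathbb{CH}^n$, and $\mathrm{Sec}^\Psi = -48$ on $\mathbb{HH}^n$ as recorded in the proof of Theorem~\ref{thm:Quat-Hyp}), and Theorem~\ref{thm:RImpliesPhi}(a) applies. You instead read off parts (a) and (c) directly from the explicit formulas $\mathcal{K}_{(B^n,\vol_1)}(v)=|v|_1$ and $\mathcal{K}_{(\mathbb{HH}^n,\Psi)}(v)=|v|_{\mathbb{HH}^n}$ of Theorems~\ref{thm:K-Poincare} and~\ref{thm:Quat-Hyp}, which verify the $R_\phi$-hyperbolicity condition globally with $c=1$. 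This is slightly stronger than what the definition requires and is a perfectly valid shortcut; it does, however, invert the logical economy of the paper, since Theorem~\ref{thm:Quat-Hyp} itself is proved using the Schwarz Lemma and the curvature bound $\mathrm{Sec}^\Psi=-48$, so invoking it here is using a harder result to derive an easier one. Your closing remark, that the curvature-based argument recovers all three parts uniformly, is precisely the paper's intended proof, so there is no gap.
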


\indent We now show that the converse of Theorem \ref{thm:RImpliesPhi}(b) is false in general.  In fact, for constant coefficient elliptic calibrations $\phi \in \Lambda^k(\R^n)^*$ that are inner M\"{o}bius rigid, we will exhibit a domain $U \subset \R^n$ that is $\phi$-hyperbolic, but not $R_\phi$-hyperbolic.

\begin{thm} \label{thm:CounterConverse} Let $\phi \in \Lambda^k(\R^n)^*$ be an inner M\"{o}bius rigid, constant-coefficient, elliptic calibration with $k \geq 2$.  There exists a domain $U \subset \R^n$ such that $(U,g_0)$ is $\phi$-hyperbolic, but not $R_\phi$-hyperbolic.
\end{thm}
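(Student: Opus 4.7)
The plan is to construct $U$ as a ``tube with spikes.'' Since $\phi$ has comass one, there exists a $\phi$-plane $\xi_0 \subset \R^n$; after rotating coordinates we may assume $\xi_0 = \mathrm{span}(e_1, \ldots, e_k)$, so that $\phi(e_1, \ldots, e_k) = 1$. Write $x = (x', y)$ with $x' \in \R^k$ and $y \in \R^{n-k}$, assuming $n > k$ (the edge case is discussed below). Pick a sequence $y_m \to 0$ in $\R^{n-k}$ with $y_m \neq 0$, together with pairwise disjoint open balls $B(y_m, r_m) \subset \R^{n-k} \setminus \{0\}$, and define the lower semi-continuous function
\[
g(y) := 1 + \sum_{m=1}^{\infty} m \cdot \chi_{B(y_m, r_m)}(y).
\]
Set $U := \{(x', y) \in \R^n : |x'| < g(y), \, |y| < 1\}$, which is open (by lower semi-continuity of $g$) and connected (any $(x', y) \in U$ may be joined to the origin by the two-step path $(x', y) \leadsto (0, y) \leadsto (0, 0)$).

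To verify $\phi$-hyperbolicity, let $f \colon \R^k \to U$ be any Smith immersion. Since each coordinate $x_j$ is affine, Corollary~\ref{kLap-Cor}(a) ensures that each component $f_j = x_j \circ f$ is $k$-harmonic on $\R^k$. The key ingredient is the Liouville-type statement that every bounded $k$-harmonic function $u$ on $\R^k$ is constant: indeed, both $u - \inf u$ and $\sup u - u$ are non-negative $k$-harmonic (using $\Delta_k(-u) = -\Delta_k u$), so Theorem~\ref{thm:SerrinZou} (Serrin-Zou) forces each to be constant. Applied to $f_{k+1}, \ldots, f_n$, which satisfy $|f_j| < 1$, this yields $y \circ f \equiv y_0$ for some $y_0$ with $|y_0| < 1$; then $|f_j| < g(y_0) < \infty$ for $j \leq k$ similarly forces $f_j$ constant. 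Hence $f$ is constant.

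To verify failure of $R_\phi$-hyperbolicity, take $p_* = (0, 0)$ and $p_m = (0, y_m)$ in $U$, so $p_m \to p_*$. Define $F_m \colon B^k \to U$ by $F_m(z) := ((m+1) z, \, y_m)$, which lands in $U$ since $|(m+1) z| < m + 1 = g(y_m)$ for $z \in B^k$. A routine computation (using $\phi(e_1, \ldots, e_k) = 1$ and the conformal factor $(1-|z|^2)^{-2}$ relating $g_0$ to $g_1$) shows that $F_m \colon (B^k, g_1, \vol_1) \to (U, g_0, \phi)$ is a Smith immersion with $\|dF_m|_0\| = m+1$. The definition of $\mathcal{K}_{(U, \phi)}$ then yields $\mathcal{K}_{(U, \phi)}(e_1) \leq 1/(m+1)$ at $p_m$, and since $p_m \to p_* \in U$, this violates $R_\phi$-hyperbolicity at $p_*$.

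The main technical input is the Liouville fact that bounded $k$-harmonic functions on $\R^k$ are constant, which reduces to Serrin-Zou via the oddness of the $k$-Laplacian. The secondary issue is the edge case $n = k$, in which the construction collapses and $\phi = \vol_0$; here the statement should persist via a different construction---for instance, taking $U$ to be the complement of a discrete puncture set accumulating at an interior point, combined with Liouville's theorem on conformal maps (valid for $k \geq 3$) to rule out non-constant Smith immersions $\R^k \to U$.
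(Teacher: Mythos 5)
For $n>k$ your argument is correct and follows essentially the same route as the paper's. You differ only in the shape of the domain: the paper uses the single ``horn'' $\{|x|\,|y|<1,\ |y|<1\}$, while you use a tube $\{|x'|<1\}$ with a sequence of fat spikes of height $m$ over shrinking balls $B(y_m,r_m)$. Both constructions allow arbitrarily large affine Smith disks through points $p_m$ approaching the origin (defeating $R_\phi$-hyperbolicity via the definition of $\mathcal{K}_\phi$), and both are thin enough in the $x$-direction over each fixed $y$-fiber that one can first fix the $y$-coordinates by Serrin--Zou and then bound the $x$-coordinates and apply Serrin--Zou again. (Your remark about $\Delta_k(-u)=-\Delta_k u$ is unnecessary---only $u-\inf u$ is needed---but harmless.) The paper's horn is algebraically cleaner, but the spike construction is equally valid.

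However, your handling of the edge case $n=k$ is incorrect, and the claim that ``the statement should persist'' is false. There are two problems. First, $\R^n$ minus a discrete puncture set accumulating at a point $p_*$ of $U$ is not open: $p_*$ is a limit of the punctures, so it is not an interior point of the complement of their closure. Second, and more seriously, the theorem is \emph{false} for $n=k$. When $n=k=2$ one has $\phi=\vol_0=\omega$, and for plane domains Brody hyperbolicity is equivalent to Kobayashi hyperbolicity (any proper subdomain of $\C$ omitting two points is covered by the disk). When $n=k\geq 3$, a non-constant Smith immersion $f\colon(B^n,g_1,\vol_1)\to(U,g_0,\vol_0)$ is, by Liouville's theorem for $1$-quasiregular maps, the restriction of a M\"{o}bius transformation; its image is a round ball or a half-space contained in $U$, and if $f(0)=p$ and $\Vert df_0\Vert=\mu$ then the image contains $B(p,\mu/2)$. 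Thus if $\mathcal{K}_{(U,\vol_0)}$ degenerated along a sequence $p_m\to p_*\in U$ one would obtain $B(p_*,R)\subset U$ for all $R$, forcing $U=\R^n$. So every proper domain in $\R^n$ is $R_{\vol_0}$-hyperbolic, and hence $\vol_0$-hyperbolic implies $R_{\vol_0}$-hyperbolic for $n=k$. You are right that the paper's own construction (and hence the theorem's stated hypothesis ``$k\geq 2$'') tacitly requires $n>k$, since it invokes a basis vector $u_1\in\R^{n-k}$; the hypothesis should read $2\leq k<n$. So noticing the gap was valuable, but the proposed repair cannot succeed.
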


\begin{proof}  Let $E \subset \R^n$ be a $\phi$-calibrated $k$-plane that contains the origin.  Choose a basis $(e_1, \ldots, e_k$, $u_1, \ldots, u_{n-k})$ of $\R^n$ such that $E = \mathrm{span}\{e_1, \ldots, e_k\}$, and let $(x_1, \ldots, x_k, y_1, \ldots, y_{n-k})$ denote the corresponding linear coordinates on $\R^k \times \R^{n-k}$.  Consider the domain
$$U = \{ (x,y) \in \R^k \times \R^{n-k} \colon (\text{if }y \neq 0, \,\text{then } |y| < 1 \text{ and } |x| |y| < 1) \text{ or } (\text{if }y = 0, \,\text{then } |x| < 1) \}.$$
We claim:
\begin{enumerate}[(a)]
\item There exists a sequence $p_j \in U$ and $v_j \in T_{p_j}U$ having $p_j \to (0,0)$ and $\mathcal{K}_\phi(v_j) < \frac{1}{j}|v_j|$.  Consequently, $U$ is not $R_\phi$-hyperbolic.
\item The domain $U$ contains no affine lines.  Consequently, $U$ is $\phi$-hyperbolic.
\end{enumerate}
Proofs of claims:
\begin{enumerate}[(a)]
\item For $j \geq 2$, let $p_j = \frac{1}{j}u_1$, and observe that $p_j \to (0,0)$.  Let $v_j = e_1|_{p_j} \in T_{p_j}U$, a sequence of unit vectors in the $e_1$-direction.  Consider the sequence of (linear) Smith immersions $f_j \colon (B^k, g_1, \vol_1) \to (U, g_0, \phi)$ defined by $f_j(z) = j(z,0) + p_j$.  The image of $f_j$ is the $k$-ball $\{(x,0) \colon |x| < j\} + p_j$, so $f_j$ is well-defined (i.e., its image lies in $U$).  Since $f_j(0) = p_j$ and $df_j|_0(e_1) = jv_j$, it follows that $\mathcal{K}_\phi(v_j) \leq \frac{1}{j} = \frac{1}{j}|v_j|$.  This proves claim (a).
\item Suppose for contradiction that there exists an affine line $L \subset U$, say $L =  \{(x_0 + tu, y_0 + tv) \colon t \in \R\}$
with $(x_0, y_0) \in U$ and $(u,v) \in E \oplus E^\perp = \R^n$ where $(u,v) \neq (0,0)$.  There are three cases to consider.
\begin{itemize}
\item Suppose $v \neq 0$.  Then for $|t|$ sufficiently large, we have $|y_0 + tv| \geq |t| |v| - |y_0| \geq 1$, showing that $L \not\subset \{ |y| < 1\}$, contradicting $L \subset U \subset \{|y| < 1\}$.
\item Suppose $v = 0$ and $y_0 \neq 0$.  Then for $|t|$ sufficiently large, we have $|x_0 + tu| |y_0 + tv| \geq  (|t| |u| - |x_0|)|y_0| \geq 1$, showing that $L \not\subset \{|x| |y| < 1\}$, contradicting $L \subset U \subset \{|x| |y| < 1\}$.
\item Suppose $v = 0$ and $y_0 = 0$.  Then $L$ is contained in the $k$-ball $\{(x,y) \in U \colon y = 0\} = \{(x,0) \in \R^k \times \R^{n-k} \colon |x| < 1\}$, which is clearly impossible.
\end{itemize}
Finally, since $U$ contains no affine lines, it cannot contain any affine $\phi$-planes.  Thus, by Theorem \ref{prop:ConfRigid}(b), $U$ is $\phi$-hyperbolic.
\end{enumerate}
\end{proof}

\subsection{The Kobayashi $\phi$-Pseudo-distance} \label{sub:KobDist}

\indent \indent We now aim to define the Kobayashi $\phi$-pseudo-distance.  For this, we need to place a regularity assumption on the KR $\phi$-metric.  Recall that for a calibrated manifold $(X, g_X, \phi)$, its KR $\phi$-pseudo-metric is
\begin{align*}
\mathcal{K}_{(X, \phi)} \colon TX & \to [0,  \infty] \\
\mathcal{K}_{(X, \phi)}(v_p) & = \inf\!\left\{ a > 0 \colon \exists f \in \mathrm{SmIm}(B^k, X), w \in T_0B^k, |w| = 1 \text{ s.t. } f(0) = p,\, df_0(w) = \frac{1}{a}v \right\}\!.
\end{align*}

\begin{defn} We say that $(X,g_X)$ is \emph{$\phi$-replete} if the KR $\phi$-metric $\mathcal{K}_{(X,\phi)} \colon TX \to [0,\infty]$ is finite and upper semicontinuous.  % By definition, this means that for every $y > 0$, the set $E_y := \{ (p,v) \in TX \colon \mathcal{K}_{(X,\phi)}(p, v) < y\}$ is open.
\end{defn}

\begin{example} ${}$
\begin{itemize}
\item If $\phi \in \Omega^k(\R^n)$ is a constant-coefficient elliptic calibration, then $(\R^n, \phi)$ is $\phi$-replete.
\item Every K\"{a}hler manifold $(X^{2m}, g_X, \omega)$ is $\omega$-replete \cite{royden2006remarks}.  Note, however, that $\mathcal{K}_{(X,\omega)}$ need not be continuous \cite[Example 3.5.37]{kobayashi2013hyperbolic}.
\item The Poincar\'{e} ball $(B^n, g_1)$ is $\vol_1$-replete (by Theorem \ref{thm:K-Poincare}).
\item Quaternionic hyperbolic space $(\mathbb{HH}^n, g_{\mathbb{HH}^n})$ is $\Psi$-replete, where $\Psi$ is the QK calibration  (by Theorem \ref{thm:Quat-Hyp}).
\end{itemize}
\end{example}

\begin{example}[Non-repleteness] Let $U \subset \R^n$ be a domain, and let $\phi = dx^1 \wedge \cdots \wedge dx^k \in \Omega^k(U)$ for $k < n$.  (Note that $\phi$ is \emph{not} an elliptic calibration.)  Then $(U, g_0)$ is \emph{not} $\phi$-replete.  Indeed, let $p \in U$, let $v \in T_pU$, and write $v = v_1 + v_2$ where $v_1 \in \mathrm{span}\{ \frac{\partial}{\partial x^1}, \ldots, \frac{\partial}{\partial x^k}\}$ and $v_2 \in \mathrm{span}\{ \frac{\partial}{\partial x^{k+1} }, \ldots, \frac{\partial}{\partial x^n}\}$.  If $v_2 = 0$, then $\mathcal{K}_{(U,\phi)}(v)$ is finite.  However, if $v_2 \neq 0$, then $\mathcal{K}_{(U,\phi)}(v) = \infty$.
\end{example}

\begin{rmk}[Almost complex case] Let $(M, J)$ be an almost-complex manifold.  In this generality, the Kobayashi-Royden pseudo-metric $F_X \colon TM \to [0,\infty]$ still makes sense.  If $J$ is class $C^{0,\alpha}$, $\alpha > 0$, then for every $v \in TM$, there exists a germ of $J$-holomorphic disks that have $v$ as a tangent vector (see \cite[Theorem III]{nijenhuis1963some}, \cite[Appendix 1]{ivashkovich2004schwarz}).  The existence of such disks ensures that $F_X(v)$ is finite, and a reasonably large supply ensures that $F_X$ is upper semicontinuous. \\
\indent When $J$ is class $C^{1,\alpha}$, the upper semicontinuity of $F_X$ was established in \cite{ivashkovich2004schwarz}.  In \cite{ivashkovich2005upper}, an almost-complex structure $J$ of class $C^{0,1/2}$ on the polydisk $\mathbb{D} \times \mathbb{D}$ was constructed such that $F_{\mathbb{D} \times \mathbb{D}}$ is not upper semicontinuous.
\end{rmk}

\begin{defn} Suppose that $(X, g_X)$ is $\phi$-replete.  The \emph{Kobayashi $\phi$-pseudo-distance} is
\begin{align*}
d_\phi = d_{(X,\phi)} \colon X \times X & \to [0,\infty] \\
d_\phi(p,q) = d_{(X,\phi)}(p,q) & = \inf_\gamma \int_0^1 \mathcal{K}_{(X,\phi)}( \gamma'(t))\,dt
\end{align*}
where the infimum is over all piecewise-smooth paths $\gamma \colon [0,1] \to X$ with $\gamma(0) = p$ and $\gamma(1) = q$.  It is straightforward to check that $d_{(X, \phi)}$ really is a pseudo-distance (i.e., $d_{(X,\phi)}$ is non-negative, symmetric in its arguments, and satisfies the triangle inequality).
\end{defn}

\indent The following basic example shows that the Kobayashi $\phi$-pseudo-distance need not be a genuine distance function.

\begin{example} \label{ex:Flat-Dist-Degen} Let $(X, g_X, \phi) = (\R^n, g_0, \phi)$, where $\phi \in \Omega^k(\R^n)$ is a constant-coefficient elliptic calibration.  By Example \ref{ex:FlatElliptic}(b), we have $\mathcal{K}_{(\R^n, \phi)}(v) = 0$ for all $v \in T\R^n$.  Consequently, $d_{(\R^n, \phi)} = 0$.
\end{example}

\indent It is natural to ask which calibrated manifolds $(X, g_X, \phi)$ have the property that $d_{(X,\phi)}$ is a genuine distance function; we will discuss this in $\S$\ref{sub:K-phi}.  Note that even when $d_{(X,\phi)}$ is a distance function, it need not coincide with the distance function $\mathrm{dist}_{g_X}$ arising from the Riemannian metric.  For now, we establish some basic properties.

\begin{convention} For the remainder of this work, all calibrated manifolds are assumed to be replete.
\end{convention}

\begin{prop}[Distance-decreasing] \label{Distance-Decreasing-Main} Let $f \colon (X^m, g_X, \alpha) \to (Y^n, g_Y, \beta)$ be a conformally calibrating map.  For all $p,q \in X$, we have
$$d_{(Y,\beta)}(f(p), f(q)) \leq d_{(X,\alpha)}(p,q).$$
\end{prop}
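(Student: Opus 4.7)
The plan is to lift the pointwise decreasing property (Proposition \ref{prop:Dec1}) to an integral inequality along arbitrary paths and then take an infimum. First I would fix any piecewise-smooth curve $\gamma \colon [0,1] \to X$ with $\gamma(0) = p$ and $\gamma(1) = q$, and observe that $f \circ \gamma \colon [0,1] \to Y$ is a piecewise-smooth path from $f(p)$ to $f(q)$ satisfying $(f \circ \gamma)'(t) = df_{\gamma(t)}(\gamma'(t))$ at each point of smoothness. Proposition \ref{prop:Dec1} applied pointwise then yields, for a.e.\ $t \in [0,1]$,
$$\mathcal{K}_{(Y,\beta)}((f \circ \gamma)'(t)) = \mathcal{K}_{(Y,\beta)}(df_{\gamma(t)}(\gamma'(t))) \leq \mathcal{K}_{(X,\alpha)}(\gamma'(t)).$$

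Next I would verify the measurability required to make sense of both integrals. By the standing $\phi$-repleteness convention, $\mathcal{K}_{(X,\alpha)}$ and $\mathcal{K}_{(Y,\beta)}$ are upper semicontinuous, hence Borel measurable, and since $\gamma'$ and $(f \circ \gamma)'$ are piecewise continuous, the two integrands above are measurable on $[0,1]$. If the right-hand side is infinite then $\gamma$ does not contribute to the infimum defining $d_{(X,\alpha)}(p,q)$ and the inequality below is vacuous for this $\gamma$; otherwise, integrating the pointwise bound and applying the definition of $d_{(Y,\beta)}$ to the admissible path $f \circ \gamma$ gives
$$d_{(Y,\beta)}(f(p), f(q)) \leq \int_0^1 \mathcal{K}_{(Y,\beta)}((f \circ \gamma)'(t))\,dt \leq \int_0^1 \mathcal{K}_{(X,\alpha)}(\gamma'(t))\,dt.$$

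Finally, I would take the infimum of the right-hand side over all piecewise-smooth curves $\gamma$ from $p$ to $q$ in $X$, which produces the desired estimate $d_{(Y,\beta)}(f(p), f(q)) \leq d_{(X,\alpha)}(p,q)$. I do not expect a serious obstacle here: the statement is a formal consequence of the pointwise decreasing property together with the chain rule for the differential, and the only mild technicality is the measurability/integrability check, which is handled by the repleteness hypothesis.
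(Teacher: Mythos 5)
Your proof is correct and follows essentially the same route as the paper's: push paths forward via $f$, apply the pointwise decreasing property (Proposition \ref{prop:Dec1}) to $(f\circ\gamma)'(t) = df_{\gamma(t)}(\gamma'(t))$, integrate, and take the infimum over admissible paths. The only addition is your brief measurability remark via upper semicontinuity, which the paper passes over silently but which does no harm.
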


\begin{proof} Fix $p,q \in X$.  Consider the following two sets of piecewise-smooth curve segments:
\begin{align*}
C_X & = \left\{ \widetilde{\gamma} \colon [0,1] \to X \mid \widetilde{\gamma}(0) = p,\, \widetilde{\gamma}(1) = q \right\} \\
C_Y & = \left\{ \gamma \colon [0,1] \to Y \mid \gamma(0) = f(p),\, \gamma(1) = f(q) \right\}\!.
\end{align*}
Notice that $C_Y$ contains the class of curve segments $\{f \circ \widetilde{\gamma} \colon [0,1] \to Y \mid \widetilde{\gamma} \in C_X\}$.  Therefore,
\begin{align*}
d_{(Y,\beta)}(f(p), f(q)) = \inf_{\gamma \in C_Y}  \int_0^1 \mathcal{K}_\beta(\gamma'(t))\,dt  & \leq \inf_{\widetilde{\gamma} \in C_X} \int_0^1 \mathcal{K}_\beta( (f \circ \widetilde{\gamma})'(t) )\,dt  \\
 & = \inf_{\widetilde{\gamma} \in C_X} \int_0^1 \mathcal{K}_\beta( df_{\gamma(t)}(\gamma'(t)) ) \,dt \\
 & \leq \inf_{\widetilde{\gamma} \in C_X} \int_0^1 \mathcal{K}_\alpha( \gamma'(t) )\,dt \tag{Proposition \ref{prop:Dec1}} \\
 & = d_{(X,\alpha)}(p,q).
 \end{align*}
\end{proof}

\begin{cor}[Comparison principle] \label{cor:Comparison-dist} Let $(X, g_X, \phi)$ be a calibrated manifold, and let $U \subset X$ be an open set.  Then $d_{(X, \phi)}(p,q) \leq d_{(U, \phi)}(p,q)$ for all $p,q \in U$.
\end{cor}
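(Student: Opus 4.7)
The plan is to deduce this directly from the distance-decreasing property (Proposition \ref{Distance-Decreasing-Main}) applied to the inclusion map. Specifically, by Example \ref{example:Inclusion}(b), the inclusion $\iota \colon (U, g_X|_U, \phi|_U) \hookrightarrow (X, g_X, \phi)$ is a local Smith equivalence, and in particular is a conformally calibrating map (with conformal factor $\lambda \equiv 1$).

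Therefore, for any $p, q \in U$, Proposition \ref{Distance-Decreasing-Main} yields
\[
d_{(X, \phi)}(p, q) = d_{(X, \phi)}(\iota(p), \iota(q)) \leq d_{(U, \phi)}(p, q),
\]
which is the desired inequality.

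There is no real obstacle here: the entire content is the observation that the inclusion is conformally calibrating, after which the result is an immediate specialization of the general distance-decreasing principle. One might remark heuristically on the inequality: a curve in $U$ from $p$ to $q$ is also a curve in $X$ from $p$ to $q$, so the infimum defining $d_{(X,\phi)}$ is taken over a larger class of paths (and moreover each Smith disk into $U$ is also a Smith disk into $X$, so the KR $\phi$-metric satisfies $\mathcal{K}_{(X,\phi)} \leq \mathcal{K}_{(U,\phi)}$ on $TU$ by Corollary \ref{cor:Comparison-K}), hence the infimum in $X$ can only be smaller.
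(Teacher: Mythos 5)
Your proof is correct and follows exactly the same route as the paper: apply Proposition \ref{Distance-Decreasing-Main} to the inclusion map $\iota \colon (U, g_X|_U, \phi) \hookrightarrow (X, g_X, \phi)$, which is conformally calibrating. The extra remarks about the inclusion being a local Smith equivalence and the heuristic explanation via Corollary \ref{cor:Comparison-K} are consistent with the paper's framework but not needed beyond the one-line argument.
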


\begin{proof} Apply Proposition \ref{Distance-Decreasing-Main} to the inclusion map $\iota \colon (U, g_X|_U, \phi) \hookrightarrow (X, g_X, \phi)$.
\end{proof}

\begin{cor}[Invariance] \label{cor:Invariance-Distance} Let $(X, g_X, \phi)$, $(Y, g_Y, \psi)$ be calibrated manifolds.  If $F \colon X \to Y$ is a Smith equivalence, then for all $p,q \in X$, we have $d_\psi(F(p), F(q)) = d_\phi(p,q)$.  In particular, every Smith automorphism $X \to X$ is an isometry of $d_\phi$.
\end{cor}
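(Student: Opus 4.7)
The plan is to derive this from the distance-decreasing property (Proposition \ref{Distance-Decreasing-Main}) applied to both $F$ and its inverse. The key observation is that a Smith equivalence is, by definition, a conformally calibrating diffeomorphism, and its inverse is again a Smith equivalence: indeed, if $F^*g_Y = \lambda^2 g_X$ and $F^*\psi = \lambda^k \phi$ for a positive function $\lambda$, then pulling back via $F^{-1}$ yields $(F^{-1})^*g_X = \mu^2 g_Y$ and $(F^{-1})^*\phi = \mu^k \psi$ with $\mu = (\lambda \circ F^{-1})^{-1}$. Hence both $F$ and $F^{-1}$ are conformally calibrating.

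First I would fix $p,q \in X$ and apply Proposition \ref{Distance-Decreasing-Main} directly to $F$, yielding
\[
d_{(Y,\psi)}(F(p), F(q)) \leq d_{(X,\phi)}(p,q).
\]
Next I would apply Proposition \ref{Distance-Decreasing-Main} to the conformally calibrating map $F^{-1} \colon (Y, g_Y, \psi) \to (X, g_X, \phi)$ at the points $F(p), F(q) \in Y$, obtaining
\[
d_{(X,\phi)}(p,q) = d_{(X,\phi)}(F^{-1}(F(p)), F^{-1}(F(q))) \leq d_{(Y,\psi)}(F(p), F(q)).
\]
Combining the two inequalities gives the desired equality $d_\psi(F(p), F(q)) = d_\phi(p,q)$.

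For the final sentence, I would specialize to the case $(Y, g_Y, \psi) = (X, g_X, \phi)$: a Smith automorphism $F \colon X \to X$ then satisfies $d_\phi(F(p), F(q)) = d_\phi(p,q)$ for all $p,q \in X$, which is precisely the statement that $F$ is a $d_\phi$-isometry. There is no real obstacle here; the work has already been done in establishing Proposition \ref{Distance-Decreasing-Main}, and this corollary is the standard two-sided application pattern for upgrading a monotonicity statement under a class of morphisms to an invariance statement under the corresponding isomorphisms.
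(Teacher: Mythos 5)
Your proof is correct and follows exactly the same two-sided application of Proposition \ref{Distance-Decreasing-Main} to $F$ and $F^{-1}$ that the paper uses. The only addition is that you spell out why $F^{-1}$ is again conformally calibrating, which the paper leaves as an observation.
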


\begin{proof} Let $F \colon (X, g_X, \phi) \to (Y, g_X, \psi)$ be a Smith equivalence, and fix $p,q \in X$.  Using Proposition \ref{Distance-Decreasing-Main} twice, and observing that $F^{-1}$ is a Smith equivalence, we have:
$$d_\psi( F(p), F(q) ) \leq d_\phi( p, q ) = d_\phi( F^{-1}( F(p) ), F^{-1}( F(q) ) ) \leq d_\psi( F(p), F(q)).$$
\end{proof}

\indent Next, we provide some examples of Kobayashi pseudo-distances.

\begin{prop}[Kobayashi $\phi$-distances on hyperbolic spaces] \label{prop:HypSpaces-KobHyp} ${}$
\begin{enumerate}[(a)]
\item On real hyperbolic space $(B^n, g_1)$, we have $d_{(B^n, \vol_1)}(p,q) = \mathrm{dist}_1(p,q)$ for all $p,q \in B^n$.
\item On quaternionic hyperbolic space $(\mathbb{HH}^n, g_{\mathbb{HH}^n})$, we have  $d_{(\mathbb{HH}^n, \Psi)}(p,q) = \mathrm{dist}_{\mathbb{HH}^n}(p,q)$ for all $p,q \in \mathbb{HH}^n$.
\end{enumerate}
\end{prop}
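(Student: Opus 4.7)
The plan is essentially a direct substitution. By definition of the Kobayashi $\phi$-pseudo-distance, for any replete calibrated manifold $(X, g_X, \phi)$,
$$d_{(X,\phi)}(p,q) = \inf_{\gamma} \int_0^1 \mathcal{K}_{(X,\phi)}(\gamma'(t))\,dt,$$
where $\gamma$ ranges over piecewise-smooth paths from $p$ to $q$. The same class of curves underlies the standard definition of the Riemannian distance:
$$\mathrm{dist}_{g_X}(p,q) = \inf_{\gamma} \int_0^1 |\gamma'(t)|_{g_X}\,dt.$$
Thus, the identity $d_{(X,\phi)} = \mathrm{dist}_{g_X}$ will follow tautologically from any pointwise identification $\mathcal{K}_{(X,\phi)}(v) = |v|_{g_X}$ on all of $TX$.

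For part (a), I would invoke Theorem \ref{thm:K-Poincare}, which gives $\mathcal{K}_{(B^n,\vol_1)}(v) = |v|_1$ for every $v \in TB^n$. Substituting into the definition above yields $d_{(B^n,\vol_1)}(p,q) = \mathrm{dist}_1(p,q)$, as claimed. For part (b), I would apply the identical argument using Theorem \ref{thm:Quat-Hyp}, which gives $\mathcal{K}_{(\mathbb{HH}^n,\Psi)}(v) = |v|_{\mathbb{HH}^n}$, to conclude $d_{(\mathbb{HH}^n,\Psi)} = \mathrm{dist}_{\mathbb{HH}^n}$.

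Since all the real work has already been done in computing the KR $\phi$-metrics on these two spaces (both via an isometric Smith disk providing an upper bound on $\mathcal{K}$, and the Schwarz lemma of Theorem \ref{thm:SchwarzMain} providing the matching lower bound, with transitivity of the Gray automorphism group transporting the identity from the origin to arbitrary points), I do not anticipate any real obstacle. The only bookkeeping point worth mentioning is that both $(B^n,g_1,\vol_1)$ and $(\mathbb{HH}^n,g_{\mathbb{HH}^n},\Psi)$ are replete, as already noted in the earlier list of examples, so that $d_{(X,\phi)}$ is well-defined in each case.
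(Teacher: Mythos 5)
Your proposal is correct and matches the paper's proof exactly: both substitute the pointwise identities $\mathcal{K}_{(B^n,\vol_1)} = |\cdot|_1$ (Theorem \ref{thm:K-Poincare}) and $\mathcal{K}_{(\mathbb{HH}^n,\Psi)} = |\cdot|_{\mathbb{HH}^n}$ (Theorem \ref{thm:Quat-Hyp}) into the defining infimum over piecewise-smooth curves, which immediately recovers the Riemannian distance.
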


\begin{proof} We prove part (a).  Let $p,q \in B^n$, and let $C$ be the collection of all piecewise-smooth curve segments $\gamma \colon [0,1] \to B^n$ with $\gamma(0) = p$ and $\gamma(1) = q$.  Then by Theorem \ref{thm:K-Poincare}, we have
$$d_{(B^n, \vol_1)}(p,q) = \inf_{\gamma \in C} \int_0^1 \mathcal{K}_{(B^n, \vol_1)}(\gamma'(t))\,dt = \inf_{\gamma \in C} \int_0^1 \left| \gamma'(t) \right|_1 dt = \mathrm{dist}_1(p,q).$$
The proof of part (b) is completely analogous, but utilizes Theorem \ref{thm:Quat-Hyp}.
\end{proof}

\begin{cor} \label{cor:Compare-Poincare-Dist} Let $f \colon (B^k, g_1, \vol_1) \to (X, g, \phi)$ be a Smith immersion, where $g_1$ denotes the Poincar\'{e} metric on $B^k$.  For all $p, q \in B^k$, we have $d_{(X,\phi)}(f(p), f(q)) \leq \mathrm{dist}_1(p,q)$.
\end{cor}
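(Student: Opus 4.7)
The plan is to combine two results already established in the paper. First, I would observe that a Smith immersion $f \colon (B^k, g_1, \vol_1) \to (X, g_X, \phi)$ is, by definition, a conformally calibrating map between calibrated manifolds whose domain calibration is the Poincar\'e volume form $\vol_1$. Therefore, the distance-decreasing property (Proposition \ref{Distance-Decreasing-Main}) applies directly to $f$, yielding
\[
d_{(X,\phi)}(f(p), f(q)) \leq d_{(B^k, \vol_1)}(p,q)
\]
for all $p,q \in B^k$.

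Next, I would invoke Proposition \ref{prop:HypSpaces-KobHyp}(a), which identifies the Kobayashi $\vol_1$-pseudo-distance on real hyperbolic space with the Riemannian distance induced by the Poincar\'e metric, i.e.\ $d_{(B^k, \vol_1)}(p,q) = \mathrm{dist}_1(p,q)$. Chaining this equality with the inequality from the previous step gives the desired conclusion $d_{(X,\phi)}(f(p), f(q)) \leq \mathrm{dist}_1(p,q)$.

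Since both ingredients are already proven earlier in the paper, there is no substantive obstacle here; the corollary really is an immediate consequence of the two results. The only minor subtlety worth noting is ensuring the repleteness convention is in force so that $d_{(X,\phi)}$ and $d_{(B^k, \vol_1)}$ are well-defined pseudo-distances, but this is guaranteed by the standing convention declared just before Proposition \ref{Distance-Decreasing-Main} together with the fact (recorded in the repleteness examples) that $(B^k, g_1)$ is $\vol_1$-replete via the explicit formula of Theorem \ref{thm:K-Poincare}.
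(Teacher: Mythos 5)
Your proposal is correct and follows exactly the paper's own two-step argument: apply the distance-decreasing property (Proposition~\ref{Distance-Decreasing-Main}) to the Smith immersion $f$, then substitute the identification $d_{(B^k,\vol_1)} = \mathrm{dist}_1$ from Proposition~\ref{prop:HypSpaces-KobHyp}(a). Your note about repleteness being in force is a reasonable sanity check, though the paper does not spell it out.
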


\begin{proof} This follows from Propositions \ref{Distance-Decreasing-Main} and \ref{prop:HypSpaces-KobHyp}(a).
\end{proof}

\indent The previous corollary can be strengthened in the following way.

\begin{prop} Let $f \colon (\Sigma^k, g_\Sigma, \vol_\Sigma) \to (X, g_X, \phi)$ be a Smith immersion. 
 Suppose that $(\Sigma, g_\Sigma)$ is a complete Riemannian manifold with constant curvature $-4$ whose universal cover is $B^k$.  Then for all $p,q \in \Sigma$, we have $d_{(X,\phi)}(f(p), f(q)) \leq \mathrm{dist}_\Sigma(p,q)$.
\end{prop}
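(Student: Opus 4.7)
The plan is to reduce the statement to the special case of the Poincar\'{e} ball (Corollary \ref{cor:Compare-Poincare-Dist}) by passing to the universal cover.

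Since $(\Sigma, g_\Sigma)$ is a complete Riemannian manifold of constant sectional curvature $-4$, the classification of space forms, combined with Proposition \ref{prop:Poincare-Facts}(b), identifies its simply connected universal cover with $(B^k, g_1)$.  Let $\pi \colon (B^k, g_1) \to (\Sigma, g_\Sigma)$ denote the Riemannian covering, and let $\Gamma \leq \mathrm{Isom}(B^k, g_1)$ be the deck transformation group, so that $\Sigma \cong B^k / \Gamma$.  Since $\pi$ is a local isometry, it satisfies $\pi^* g_\Sigma = g_1$ and $\pi^* \vol_\Sigma = \vol_1$, which is to say that $\pi \colon (B^k, g_1, \vol_1) \to (\Sigma, g_\Sigma, \vol_\Sigma)$ is a local Smith equivalence (this is an instance of Proposition \ref{prop:Discrete}).

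Therefore, by the composition fact (Proposition \ref{prop:Invariance}(a)), the composition
$$\widetilde{f} := f \circ \pi \colon (B^k, g_1, \vol_1) \to (X, g_X, \phi)$$
is a Smith immersion.  Applying Corollary \ref{cor:Compare-Poincare-Dist} to $\widetilde{f}$, we obtain, for every $\widetilde{p}, \widetilde{q} \in B^k$,
$$d_{(X,\phi)}(f(\pi(\widetilde{p})), f(\pi(\widetilde{q}))) = d_{(X,\phi)}(\widetilde{f}(\widetilde{p}), \widetilde{f}(\widetilde{q})) \leq \mathrm{dist}_1(\widetilde{p}, \widetilde{q}).$$

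Fix $p, q \in \Sigma$, and choose a lift $\widetilde{p} \in \pi^{-1}(p)$.  Then for every $\widetilde{q} \in \pi^{-1}(q)$, the inequality above gives
$$d_{(X, \phi)}(f(p), f(q)) \leq \mathrm{dist}_1(\widetilde{p}, \widetilde{q}).$$
Since $\pi$ is a Riemannian covering and $\Gamma$ acts by isometries, the induced distance on the quotient satisfies $\mathrm{dist}_\Sigma(p,q) = \inf_{\widetilde{q} \in \pi^{-1}(q)} \mathrm{dist}_1(\widetilde{p}, \widetilde{q})$.  Taking the infimum over all lifts $\widetilde{q}$ of $q$ yields $d_{(X, \phi)}(f(p), f(q)) \leq \mathrm{dist}_\Sigma(p,q)$, as desired.

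The only potentially subtle point is the identification of the universal cover as $(B^k, g_1)$ with the Poincar\'{e} metric (rather than some other model of hyperbolic $k$-space of curvature $-4$), and the observation that the covering map is not merely an isometry of Riemannian manifolds but in fact a local Smith equivalence between calibrated manifolds; both follow from the normalization choices in $\S$\ref{sub:Hyperbolic-Space} together with Proposition \ref{prop:Discrete}.
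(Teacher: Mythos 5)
Your proposal is correct and follows essentially the same route as the paper: pass to the universal cover $\pi \colon (B^k, g_1, \vol_1) \to (\Sigma, g_\Sigma, \vol_\Sigma)$, observe $f \circ \pi$ is a Smith immersion, apply Corollary \ref{cor:Compare-Poincare-Dist}, then minimize over lifts. The only cosmetic difference is that the paper re-derives the inequality $\inf_{\widetilde{q} \in \pi^{-1}(q)} \mathrm{dist}_1(\widetilde{p}, \widetilde{q}) \leq \mathrm{dist}_\Sigma(p,q)$ by explicitly lifting a minimizing geodesic, whereas you invoke it as the standard formula for the distance on a Riemannian covering quotient.
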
 

\begin{proof} Let $f \colon (\Sigma, g_\Sigma, \vol_\Sigma) \to (X, g_X, \phi)$ be a Smith immersion.  Let $\pi \colon B^k \to \Sigma$ be the universal cover, and note that $\pi^*g_\Sigma$ is isometric to the Poincar\'{e} metric $g_1$ by the Killing-Hopf theorem.  Therefore, $\pi \colon (B^k, g_1, \vol_1) \to (\Sigma, g_\Sigma, \vol_\Sigma)$ is a local Smith equivalence.  Let $p, q \in \Sigma$, and choose $a,b \in B^k$ with $a \in \pi^{-1}(p)$ and $b \in \pi^{-1}(q)$.   \\
\indent Let $F := f \circ \pi \colon (B^k, g_1, \vol_1) \to (X, g_X, \phi)$, so that $F$ is a Smith immersion (Proposition \ref{prop:Invariance}).  Note that $F(a) = f(p)$ and $F(b) = f(q)$.  So, Corollary \ref{cor:Compare-Poincare-Dist} implies
\begin{equation} \label{eq:CoverDistance}
d_\phi(f(p), f(q)) = d_\phi(F(a), F(b)) \leq \mathrm{dist}_1(a,b).
\end{equation}
Now, since $\Sigma$ is complete, there exists a minimizing geodesic $\gamma \colon [0,1] \to \Sigma$ from $p$ to $q$.  Let $\widetilde{\gamma} \colon [0,1] \to B^k$ be a lift of $\gamma$ having $\widetilde{\gamma}(0) = a$, and set $\widetilde{b} = \widetilde{\gamma}(1)$.  Since $\pi$ is a local isometry and $\gamma$ is minimizing, we have $\mathrm{dist}_\Sigma(p,q) \leq \mathrm{dist}_1(a, \widetilde{b}) \leq L(\widetilde{\gamma}) = L(\pi \circ \widetilde{\gamma}) = L(\gamma) = \mathrm{dist}_\Sigma(p,q)$.  Finally, since (\ref{eq:CoverDistance}) is true for all $b \in \pi^{-1}(q)$, we conclude that $d_\phi(f(p), f(q)) \leq \mathrm{dist}_1(a, \widetilde{b}) = \mathrm{dist}_\Sigma(p,q)$.
\end{proof}

\subsection{$K_\phi$-hyperbolicity} \label{sub:K-phi}

\indent \indent We are finally in a position to define $K_\phi$-hyperbolicity.  Let $(X, g_X, \phi)$ be a $\phi$-replete calibrated manifold, and recall that its Kobayashi $\phi$-pseudo-distance is the function $d_\phi \colon X \times X \to [0,\infty]$ given by
\begin{align*}
d_\phi(p,q) & = \inf_\gamma \int_0^1 \mathcal{K}_\phi( \gamma'(t))\,dt
\end{align*}
where the infimum is over all piecewise-smooth paths $\gamma \colon [0,1] \to X$ with $\gamma(0) = p$ and $\gamma(1) = q$. 

\begin{defn} Say $(X, g_X)$ is \emph{$K_\phi$-hyperbolic} if $d_\phi$ is a distance function (i.e., if $p, q \in X$ with $p \neq q$ implies $d_\phi(p,q) > 0$).
\end{defn}

\begin{prop} Let $(X, g_X, \phi)$ be a calibrated manifold, $\phi \in \Omega^k(X)$.
\begin{enumerate}[(a)]
\item Suppose $(X, g_X, \phi)$ is Smith equivalent to $(Y, g_Y, \psi)$.  Then $(X, g_X)$ is $K_\phi$-hyperbolic if and only if $(Y, g_Y)$ is $K_\psi$-hyperbolic.
\item Let $U \subset X$ be an open subset.  If $(X, g_X)$ is $K_\phi$-hyperbolic, then $(U, g_X|_U)$ is $K_\phi$-hyperbolic.
\end{enumerate}
\end{prop}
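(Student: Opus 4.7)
The plan is to deduce both parts directly from the invariance and comparison properties of the Kobayashi $\phi$-pseudo-distance established in Corollaries \ref{cor:Invariance-Distance} and \ref{cor:Comparison-dist}. Neither part should involve new ideas; the proposition is essentially a bookkeeping consequence of what has already been shown. I do not expect a significant obstacle here.

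For part (a), I would start with a Smith equivalence $F \colon (X, g_X, \phi) \to (Y, g_Y, \psi)$. By Corollary \ref{cor:Invariance-Distance}, we have the identity $d_\psi(F(p), F(q)) = d_\phi(p,q)$ for every $p,q \in X$. Since $F$ is a diffeomorphism, any pair of distinct points $p', q' \in Y$ corresponds uniquely to the pair of distinct points $p = F^{-1}(p')$, $q = F^{-1}(q')$ in $X$, and conversely. Hence $d_\psi(p',q') > 0$ for all $p' \neq q'$ in $Y$ if and only if $d_\phi(p,q) > 0$ for all $p \neq q$ in $X$. This gives the equivalence.

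For part (b), assume $(X, g_X)$ is $K_\phi$-hyperbolic, and let $p,q \in U$ with $p \neq q$. By the comparison principle for the Kobayashi $\phi$-pseudo-distance (Corollary \ref{cor:Comparison-dist}), we have
\[
d_{(U,\phi)}(p,q) \geq d_{(X,\phi)}(p,q) > 0,
\]
where the strict inequality comes from the $K_\phi$-hyperbolicity of $X$. Thus $d_{(U,\phi)}$ is non-degenerate, so $(U, g_X|_U)$ is $K_\phi$-hyperbolic.

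In both parts, the nontrivial analytic work has already been absorbed into the invariance (Corollary \ref{cor:Invariance-Distance}) and comparison (Corollary \ref{cor:Comparison-dist}) statements, which in turn rest on the decreasing property of $\mathcal{K}_\phi$ under conformally calibrating maps (Proposition \ref{prop:Dec1}). The only minor point worth double-checking is that the repleteness convention is preserved under Smith equivalences and restrictions to open subsets, so that $d_\psi$ and $d_{(U,\phi)}$ are defined in the first place; but upper semicontinuity of $\mathcal{K}_\phi$ transfers via the invariance formula $\mathcal{K}_\psi \circ dF = \mathcal{K}_\phi$ (Corollary \ref{cor:Invariance}) and via the inequality $\mathcal{K}_{(X,\phi)} \leq \mathcal{K}_{(U,\phi)}$ combined with the definition of $\mathcal{K}_{(U,\phi)}$ as an infimum over a smaller family of Smith immersions, which one readily checks to be upper semicontinuous.
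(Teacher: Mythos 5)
Your proof is correct and follows exactly the paper's route: part (a) is immediate from Corollary \ref{cor:Invariance-Distance} and part (b) from Corollary \ref{cor:Comparison-dist}. The only caveat is your final remark: while repleteness does transfer under a Smith equivalence via Corollary \ref{cor:Invariance} (since $dF$ is a diffeomorphism of tangent bundles), the claim that $\mathcal{K}_{(U,\phi)}$ is ``readily checked'' to be upper semicontinuous is not actually justified by the comparison inequality $\mathcal{K}_{(X,\phi)} \leq \mathcal{K}_{(U,\phi)}$, and in general repleteness of $X$ need not pass to open subsets. The paper sidesteps this entirely by its standing Convention that all calibrated manifolds in \S\ref{sec:RphiKphi} are assumed replete, so no such transfer argument is needed.
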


\begin{proof} Part (a) follows from Corollary \ref{cor:Invariance-Distance}.  Part (b) follows from Corollary \ref{cor:Comparison-dist}.
\end{proof}

\begin{example}[K\"{a}hler calibrations] Let $(X, g_X, \omega)$ be a K\"{a}hler manifold.  Royden \cite{royden2006remarks} proved:
$$X \text{ is } R_\omega\text{-hyperbolic} \ \iff \ X \text{ is }K_\omega\text{-hyperbolic} \ \, (\text{i.e., }X \text{ is Kobayashi hyperbolic}).$$
\end{example}

\noindent We now show that one of these implications holds in general.

\begin{thm} \label{thm:K-implies-Phi} Let $(X, g_X, \phi)$ be a $\phi$-replete calibrated manifold.
\begin{enumerate}[(a)]
\item If $(X, g_X)$ is $R_\phi$-hyperbolic, then $(X, g_X)$ is $K_\phi$-hyperbolic.
\item If $(X, g_X)$ is $K_\phi$-hyperbolic, then $(X, g_X)$ is $\phi$-hyperbolic.
\end{enumerate}
\end{thm}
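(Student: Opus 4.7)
The plan is to prove the two implications separately, with (a) by a localization-plus-length argument and (b) by a short contradiction using the decreasing property together with the computation $\mathcal{K}_{(\R^k,\vol_0)} \equiv 0$.

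For part (a), assume $X$ is $R_\phi$-hyperbolic. Fix distinct points $p, q \in X$. The goal is to produce a positive lower bound for $d_\phi(p,q)$. By $R_\phi$-hyperbolicity, choose a neighborhood $U \subset X$ of $p$ and a constant $c > 0$ such that $\mathcal{K}_{(X,\phi)}(v) \geq c |v|_X$ for all $v \in TU$. Choose $r > 0$ small enough that the metric ball $B_X(p, r) = \{x \in X : \mathrm{dist}_X(p,x) < r\}$ is contained in $U$ and satisfies $r < \mathrm{dist}_X(p,q)$. For any piecewise-smooth curve $\gamma \colon [0,1] \to X$ with $\gamma(0) = p$ and $\gamma(1) = q$, set $t^* = \inf\{t \in [0,1] : \mathrm{dist}_X(p, \gamma(t)) \geq r\}$, which is strictly less than $1$ by the choice of $r$ and continuity of $\gamma$. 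On $[0, t^*]$ the curve lies in $U$, so
\begin{align*}
\int_0^1 \mathcal{K}_{(X,\phi)}(\gamma'(t))\,dt \ \geq\ \int_0^{t^*} \mathcal{K}_{(X,\phi)}(\gamma'(t))\,dt \ \geq\ c \int_0^{t^*} |\gamma'(t)|_X\,dt \ \geq\ c\,\mathrm{dist}_X(\gamma(0), \gamma(t^*)) \ =\ cr.
\end{align*}
Taking the infimum over all admissible $\gamma$ yields $d_\phi(p,q) \geq cr > 0$, so $d_\phi$ is non-degenerate.

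For part (b), I argue by contrapositive. Suppose $(X, g_X)$ fails to be $\phi$-hyperbolic, so there exists a non-constant Smith immersion $f \colon (\R^k, g_0, \vol_0) \to (X, g_X, \phi)$. Pick $p_0, p_1 \in \R^k$ with $f(p_0) \neq f(p_1)$. By Proposition \ref{ex:FlatElliptic}(b) (the flat-space vanishing applied to the trivially elliptic top-degree calibration $\vol_0$), we have $\mathcal{K}_{(\R^k, \vol_0)} \equiv 0$, which is trivially upper semicontinuous and gives $d_{(\R^k, \vol_0)}(p_0, p_1) = 0$. Applying the distance-decreasing property (Proposition \ref{Distance-Decreasing-Main}) to the conformally calibrating map $f$ yields
\begin{align*}
d_{(X,\phi)}(f(p_0), f(p_1)) \ \leq\ d_{(\R^k, \vol_0)}(p_0, p_1) \ =\ 0,
\end{align*}
so $d_{(X,\phi)}$ is degenerate and $X$ is not $K_\phi$-hyperbolic.

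Neither step is especially subtle: (a) is the standard Royden-style localization argument, and (b) is a direct application of machinery already developed in the section (the distance-decreasing property and the flat-space calculation). The only mild care needed is in (a), where one must choose $r$ small enough both to sit inside $U$ and strictly less than $\mathrm{dist}_X(p,q)$, and must argue via $t^*$ that the curve really does accumulate $g_X$-length at least $r$ while staying inside $U$ — this is the main (minor) obstacle, handled by the continuity of $\gamma$ and the triangle inequality.
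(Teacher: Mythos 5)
Your proof is correct and follows essentially the same route as the paper's: for (a), the same Royden-style localization — restrict to a neighborhood where $\mathcal{K}_\phi \geq c|\cdot|_X$, choose a ball of radius $r$ inside it not containing $q$, and bound the integral over the initial segment until first exit — and for (b), the same contrapositive using the distance-decreasing property together with the flat-space vanishing $d_{(\R^k,\vol_0)} \equiv 0$. The only cosmetic difference is that the paper uses $r < \tfrac{1}{2}\mathrm{dist}_X(p,q)$ where $r < \mathrm{dist}_X(p,q)$ suffices, as you have it.
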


\begin{proof} ${}$
\begin{enumerate}[(a)]
\item Suppose $(X, g_X)$ is $R_\phi$-hyperbolic, and let $p,q \in X$.  By $R_\phi$-hyperbolicity, there exists a neighborhood $U \subset X$ of $p$ and a constant $c > 0$ such that $\mathcal{K}_{(X,\phi)}(v) \geq c|v|_X$ for all $v \in TU$.  Let $B \subset U$ denote a geodesic ball centered at $p$ of radius $r < \frac{1}{2}\mathrm{dist}_X(p,q)$.  Now, let $\gamma \colon [0,1] \to X$ be a piecewise-smooth path from $p$ to $q$, and let $t_0 \in (0,1)$ be the first time that $\gamma$ crosses the boundary of $B$.  Then
$$\int_0^1 \mathcal{K}_{\phi}(\gamma'(t))\,dt  \geq \int_0^{t_0} \mathcal{K}_\phi(\gamma'(t))\,dt \geq c \int_0^{t_0} |\gamma'(t)|_X\,dt \geq cr.$$
Taking infima over all relevant $\gamma \colon [0,1] \to X$ shows that $d_{(X,\phi)}(p,q) \geq cr > 0$, and thus $(X, g_X)$ is $K_\phi$-hyperbolic. 
\item We prove the contrapositive.  Suppose that $(X,g_X)$ is not $\phi$-hyperbolic, so there exists a non-constant Smith immersion $f \colon (\R^k, g_0, \vol_0) \to (X, g_X, \phi)$.  Since $f$ is not constant, there exist points $p,q \in \R^k$ with $p \neq q$ such that $f(p) \neq f(q)$.  By Proposition \ref{Distance-Decreasing-Main} and Example \ref{ex:Flat-Dist-Degen}, we have $d_{(X,\phi)}( f(p), f(q) ) \leq d_{(\R^k, \vol_0)}(p,q) = 0$.  Thus, $d_{(X,\phi)}$ is not a distance function, so $(X, g_X)$ is not $K_\phi$-hyperbolic.
\end{enumerate}
\end{proof}

 We do not know whether the converse of Theorem \ref{thm:K-implies-Phi}(a) holds or fails in general.  Regarding part (b), we now show that its converse is false without further assumptions.

\begin{prop} Let $\phi \in \Lambda^k(\R^n)^*$ be an inner M\"{o}bius rigid, constant-coefficient, elliptic calibration with $k \geq 2$.  Let $U \subset \R^n$ be the $\phi$-hyperbolic domain constructed in the proof of Theorem \ref{thm:CounterConverse}, and suppose that $(U, g_0)$ is $\phi$-replete.  Then $(U, g_0)$ is not $K_\phi$-hyperbolic.
\end{prop}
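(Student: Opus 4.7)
The plan is to exhibit distinct points $p_0, q_0 \in U$ with $d_{(U,\phi)}(p_0, q_0) = 0$, which immediately yields the failure of $K_\phi$-hyperbolicity.  Retaining the notation from the proof of Theorem~\ref{thm:CounterConverse}---with $E$ a fixed $\phi$-plane, $(x,y) \in \R^k \times \R^{n-k}$ the associated linear coordinates, and $u_1$ a unit vector in $E^\perp$---take $p_0 = (a, 0)$ and $q_0 = (b, 0)$ with $a, b \in E$, $|a|_0, |b|_0 \leq \tfrac{1}{4}$, and $a \neq b$.  The route from $p_0$ to $q_0$ passes through the intermediate points $P_R := (a, \tfrac{1}{R}u_1)$ and $Q_R := (b, \tfrac{1}{R}u_1)$, connected inside the ``thin tube'' at height $\tfrac{1}{R}u_1$ by the large Smith immersion $f_R(z) = (Rz, \tfrac{1}{R}u_1)$ from the proof of Theorem~\ref{thm:CounterConverse}.

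The horizontal leg is controlled automatically by the decreasing property: since $f_R(a/R) = P_R$ and $f_R(b/R) = Q_R$, Corollary~\ref{cor:Compare-Poincare-Dist} applied to $f_R$ yields $d_{(U,\phi)}(P_R, Q_R) \leq \mathrm{dist}_1(a/R, b/R)$, and this Poincar\'{e} distance tends to $0$ as $R \to \infty$ via the explicit formula in Proposition~\ref{prop:Poincare-Facts}(d).  For the vertical leg $\alpha_R(t) := (a, \tfrac{t}{R}u_1)$, $t \in [0,1]$ (and symmetrically for a descent $\beta_R$ from $Q_R$ to $q_0$), I exploit the $\phi$-repleteness hypothesis.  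The key observation is that upper semicontinuity of $\mathcal{K}_\phi$, together with the homogeneity $\mathcal{K}_\phi(sv) = |s|\mathcal{K}_\phi(v)$ from Proposition~\ref{lem:K-Basics}, forces $\mathcal{K}_\phi$ to be \emph{finite} everywhere on $TU$.  Indeed, were $\mathcal{K}_\phi(v_0) = \infty$ for some nonzero $v_0 \in T_p U$, then $\mathcal{K}_\phi(\tfrac{1}{n}v_0) = \infty$ for every $n$, giving $\limsup_n \mathcal{K}_\phi(\tfrac{1}{n}v_0) = \infty > 0 = \mathcal{K}_\phi(0)$, which contradicts USC.

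A finite USC function is bounded above on compact sets, so on the compact subset
$$L := \bigl\{(p, v) \in TU \colon p = (a, y),\ |y|_0 \leq \tfrac{1}{2},\ |v|_0 \leq 2 \bigr\} \subset TU$$
(which lies in $TU$ because $|a|_0 \leq 1/4$ ensures $(a, y) \in U$ whenever $|y|_0 \leq 1/2$) we have $M := \sup_L \mathcal{K}_\phi < \infty$.  For $R \geq 2$ and $t \in [0,1]$, the unit vector $(0, u_1)$ based at $\alpha_R(t)$ lies in $L$, so $\mathcal{K}_\phi((0, u_1)|_{\alpha_R(t)}) \leq M$; homogeneity then gives $\mathcal{K}_\phi(\alpha_R'(t)) = \tfrac{1}{R}\mathcal{K}_\phi((0, u_1)|_{\alpha_R(t)}) \leq M/R$, whence the $\mathcal{K}_\phi$-length of $\alpha_R$ is at most $M/R$.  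Combining the three legs by the triangle inequality,
$$d_{(U, \phi)}(p_0, q_0) \leq \tfrac{2M}{R} + \mathrm{dist}_1(a/R, b/R) \longrightarrow 0 \quad \text{as } R \to \infty,$$
giving $d_{(U,\phi)}(p_0, q_0) = 0$ with $p_0 \neq q_0$.  The step I expect to require the most care is the USC-to-local-boundedness deduction used in the vertical estimate: without pointwise finiteness of $\mathcal{K}_\phi$, the Finsler length of arbitrarily short Euclidean segments could blow up, collapsing the entire triangle-inequality argument.
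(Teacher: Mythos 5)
Your proof is correct and follows essentially the same three-leg triangle-inequality decomposition as the paper: two ``vertical'' shifts along $u_1$ whose $\mathcal{K}_\phi$-lengths shrink, plus a ``horizontal'' leg inside a large linear Smith disk controlled by Corollary~\ref{cor:Compare-Poincare-Dist}. The one place you go beyond the paper is the vertical estimate: the paper simply writes $\frac{1}{n}\int_0^1 \mathcal{K}_\phi(u_1)\,dt \to 0$, leaving implicit why the integrand is finite and uniformly bounded along the shrinking segment, whereas you supply the missing justification by showing that upper semicontinuity plus the homogeneity $\mathcal{K}_\phi(sv)=|s|\,\mathcal{K}_\phi(v)$ forces $\mathcal{K}_\phi$ to be finite, hence bounded on compact subsets of $TU$; this is exactly the point where the $\phi$-repleteness hypothesis is actually used, and making it explicit is a genuine improvement in rigor.
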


\begin{proof} Let $p = (a,0) \in U$ and $q = (b,0) \in U$.  Define sequences of points $p_n, q_n \in U$ for $n \geq 2$ by $p_n = p + \frac{1}{n}u_1$ and $q_n = q + \frac{1}{n}u_1$.  Observe that
$$d_\phi(p,q) \leq d_\phi( p, p_n) + d_\phi( p_n, q_n) + d_\phi(q_n, q).$$
We will prove that $d_\phi(p, p_n) \to 0$ and $d_\phi(q, q_n) \to 0$, and then that $d_\phi(p_n, q_n) \to 0$. \\
\indent First, consider the curves $\gamma_n \colon [0,1] \to U$ given by $\gamma_n(t) = p + \frac{t}{n}u_1$.   Note that $\gamma_n$ is well-defined (i.e., the image of $\gamma_n$ lies in $U$).  Note also that $\gamma_n(0) = p$ and $\gamma_n(1) = q$, and that $\gamma_n'(t) = \frac{1}{n}u_1$.  Therefore,
\begin{align*}
d_\phi(p, p_n) & \leq \int_0^1 \mathcal{K}_\phi(\gamma'(t))\,dt = \frac{1}{n} \int_0^1 \mathcal{K}_\phi(u_1)\,dt \to 0 \text{ as } n \to \infty.
\end{align*}
An analogous argument shows that $d_\phi(q, q_n) \to 0$. \\
\indent Next, for $n \geq 2$, consider the linear Smith immersions $f_n \colon (B^k, g_1, \vol_1) \to (U, g_0, \phi)$ defined by $f_n(z) = n(z,0) + \frac{1}{n}u_1$.  Note that the image of $f_n$ is the $k$-ball $\frac{1}{n}u_1 + \{(x,0) \colon |x| < n\}$, and that $f_n(\frac{1}{n}a) = p_n$ and $f_n(\frac{1}{n}b) = q_n$.  Using Corollary \ref{cor:Compare-Poincare-Dist} and Proposition \ref{prop:Poincare-Facts}, we have
\begin{align*}
d_\phi( p_n, q_n) = d_\phi\!\left( f_n\!\left(\frac{1}{n}a\right), f_n\!\left(\frac{1}{n}b\right) \right) & \leq \mathrm{dist}_1\!\left( \frac{1}{n}a, \frac{1}{n}b\right) \\
& = \mathrm{arcsinh} \sqrt{ \frac{ \frac{1}{n^2} \left| a-b \right|^2 }{ (1 - \frac{1}{n^2}|a|^2) (1 - \frac{1}{n^2}|b|^2) } } \\
& \to 0 \text{ as } n \to \infty.
\end{align*}
This proves that $U$ is not $K_\phi$-hyperbolic.
\end{proof}

\section{Questions}

\indent We conclude this work by raising the following questions.
\begin{enumerate}
\item Which calibrated manifolds $(X, g_X, \phi)$ are $\phi$-replete?  Proving the $\phi$-repleteness of $(X, g_X)$ appears to require the existence of a large supply of conformally flat, immersed $\phi$-calibrated $k$-disks in $X$.  Given the difficulty of constructing calibrated submanifolds, this question appears to be non-trivial.
\item When $(X, g, \omega)$ is a K\"{a}hler manifold, Brody's theorem \cite{brody1978compact} asserts that if $X$ is compact, then $K_\omega$-hyperbolicity is equivalent to $\omega$-hyperbolicity.  Are there other calibrated geometries $(X, g, \phi)$ for which Brody's theorem holds?  That is, if $X$ is compact, is $K_\phi$-hyperbolicity equivalent to $\phi$-hyperbolicity?  In the Riemannian geometric study of hyperbolicity via conformal harmonic disks, a partial analog of Brody's theorem is established in \cite[Theorem 6.7]{gaussier2024kobayashi}.
\item When $(X, g, \omega)$ is a compact K\"{a}hler manifold, it is known that the $K_\omega$-hyperbolicity of $X$ implies that the biholomorphism group of $X$ (and hence also the Smith automorphism group $\mathrm{SmAut}(X, g, \omega)$) is finite.  Are there other calibrated geometries for which compactness and $K_\phi$-hyperbolicity together imply the finiteness of $\mathrm{SmAut}(X, g, \phi)$?
\item In view of Theorem \ref{cor:NegativeCurvature}, it would be interesting to have examples of calibrated manifolds $(X, g_X, \phi)$ having negative $\phi$-sectional curvature (without having negative sectional curvature).
\item Arguably the simplest example of a hyperbolic manifold (for essentially any notion of ``hyperbolicity") is hyperbolic space $\mathbb{H}^n$, which we have viewed as the Poincar\'{e} ball $(B^n, g_1)$.  As such, it would be interesting to have a large supply of calibrations on $\mathbb{H}^n$.
\end{enumerate}

\bibliographystyle{plain}
\bibliography{Hyperbolicity-Ref}

@article{brotbek2017hyperbolicity,
	Author = {Brotbek, Damian},
	Date-Added = {2025-11-01 06:01:48 +0000},
	Date-Modified = {2025-11-01 06:01:48 +0000},
	Journal = {Publications math{\'e}matiques de l'IH{\'E}S},
	Pages = {1--34},
	Title = {On the hyperbolicity of general hypersurfaces},
	Volume = {126},
	Year = {2017}}

@article{siu2015hyperbolicity,
	Author = {Siu, Yum-Tong},
	Date-Added = {2025-11-01 06:01:36 +0000},
	Date-Modified = {2025-11-01 06:01:36 +0000},
	Journal = {Inventiones mathematicae},
	Number = {3},
	Pages = {1069--1166},
	Publisher = {Springer},
	Title = {Hyperbolicity of generic high-degree hypersurfaces in complex projective space},
	Volume = {202},
	Year = {2015}}

@article{MR572300,
	Author = {Barth, Theodore J.},
	Doi = {10.2307/2042495},
	Fjournal = {Proceedings of the American Mathematical Society},
	Issn = {0002-9939,1088-6826},
	Journal = {Proc. Amer. Math. Soc.},
	Mrclass = {32H20 (32F15 32F99)},
	Mrnumber = {572300},
	Mrreviewer = {L.\ Andrew\ Campbell},
	Number = {4},
	Pages = {556--558},
	Title = {Convex domains and {K}obayashi hyperbolicity},
	Url = {https://doi.org/10.2307/2042495},
	Volume = {79},
	Year = {1980},
	Bdsk-Url-1 = {https://doi.org/10.2307/2042495}}

@article{albanese2017schwarz,
	Author = {Albanese, Guglielmo and Rigoli, Marco},
	Date-Added = {2025-07-22 07:45:17 +0000},
	Date-Modified = {2025-07-22 07:45:27 +0000},
	Journal = {Communications in Analysis and Geometry},
	Number = {4},
	Pages = {719--749},
	Publisher = {International Press of Boston},
	Title = {A {S}chwarz-type lemma for noncompact manifolds with boundary and geometric applications},
	Volume = {25},
	Year = {2017}}

@article{yau1975harmonic,
	Author = {Yau, Shing-Tung},
	Date-Added = {2025-07-22 07:44:12 +0000},
	Date-Modified = {2025-07-22 07:44:24 +0000},
	Journal = {Communications on Pure and Applied Mathematics},
	Number = {2},
	Pages = {201--228},
	Publisher = {Wiley Online Library},
	Title = {Harmonic functions on complete {R}iemannian manifolds},
	Volume = {28},
	Year = {1975}}

@article{lempert1981metrique,
	Author = {Lempert, L{\'a}szl{\'o}},
	Date-Added = {2025-07-22 07:18:12 +0000},
	Date-Modified = {2025-07-22 07:18:24 +0000},
	Journal = {Bulletin de la Soci{\'e}t{\'e} Math{\'e}matique de France},
	Pages = {427--474},
	Title = {La m{\'e}trique de {K}obayashi et la repr{\'e}sentation des domaines sur la boule},
	Volume = {109},
	Year = {1981}}

@article{nijenhuis1963some,
	Author = {Nijenhuis, Albert and Woolf, William B},
	Date-Added = {2025-07-22 07:13:32 +0000},
	Date-Modified = {2025-07-22 07:13:32 +0000},
	Journal = {Annals of Mathematics},
	Number = {3},
	Pages = {424--489},
	Publisher = {JSTOR},
	Title = {Some integration problems in almost-complex and complex manifolds},
	Volume = {77},
	Year = {1963}}

@article{ivashkovich2005upper,
	Author = {Ivashkovich, Sergey and Pinchuk, Sergey and Rosay, Jean-Pierre},
	Date-Added = {2025-07-22 07:12:16 +0000},
	Date-Modified = {2025-07-22 07:12:27 +0000},
	Journal = {Arkiv f{\"o}r Matematik},
	Number = {2},
	Pages = {395--401},
	Publisher = {Kluwer Academic Publishers Dordrecht},
	Title = {Upper semi-continuity of the {K}obayashi-{R}oyden pseudo-norm, a counterexample for H{\"o}lderian almost complex structures},
	Volume = {43},
	Year = {2005}}

@inproceedings{ivashkovich2004schwarz,
	Author = {Ivashkovich, Sergey and Rosay, Jean-Pierre},
	Booktitle = {Annales de l'institut Fourier},
	Date-Added = {2025-07-22 07:11:21 +0000},
	Date-Modified = {2025-07-22 07:11:42 +0000},
	Number = {7},
	Organization = {Chartres: L'Institut, 1950-},
	Pages = {2387--2436},
	Title = {Schwarz-type lemmas for solutions of $\overline{\partial}$-inequalities and complete hyperbolicity of almost complex manifolds},
	Volume = {54},
	Year = {2004}}

@article{brody1978compact,
	Author = {Brody, Robert},
	Date-Added = {2025-07-22 07:08:19 +0000},
	Date-Modified = {2025-07-22 07:08:55 +0000},
	Journal = {Transactions of the American Mathematical Society},
	Pages = {213--219},
	Title = {Compact manifolds and hyperbolicity},
	Volume = {235},
	Year = {1978}}

@article{bryant1989submanifolds,
	Author = {Bryant, Robert and Harvey, Reese},
	Date-Added = {2025-07-22 07:06:10 +0000},
	Date-Modified = {2025-07-22 07:06:19 +0000},
	Journal = {Journal of the American Mathematical Society},
	Pages = {1--31},
	Publisher = {JSTOR},
	Title = {Submanifolds in hyper-{K}{\"a}hler geometry},
	Year = {1989}}

@article{moroianu2024quaternion,
	Author = {Moroianu, Andrei and Semmelmann, Uwe and Weingart, Gregor},
	Date-Added = {2025-07-22 06:51:57 +0000},
	Date-Modified = {2025-07-22 06:52:16 +0000},
	Journal = {arXiv preprint arXiv:2412.00385},
	Title = {Quaternion-{K}ahler manifolds with non-negative quaternionic sectional curvature},
	Year = {2024}}

@article{macia2025quaternionic,
	Author = {Macia, Oscar and Semmelmann, Uwe and Weingart, Gregor},
	Date-Added = {2025-07-22 06:51:14 +0000},
	Date-Modified = {2025-07-22 06:51:14 +0000},
	Journal = {Mathematische Annalen},
	Number = {3},
	Pages = {4389--4407},
	Publisher = {Springer},
	Title = {On quaternionic bisectional curvature},
	Volume = {391},
	Year = {2025}}

@article{ishihara1973quaternion,
	Author = {Ishihara, Shigeru},
	Date-Added = {2025-07-22 06:49:52 +0000},
	Date-Modified = {2025-07-22 06:50:23 +0000},
	Journal = {Bulletin of the Korean Mathematical Society},
	Number = {1},
	Pages = {31--32},
	Publisher = {Korean Mathematical Society},
	Title = {Quaternion {K}{\"a}hler manifolds},
	Volume = {10},
	Year = {1973}}

@article{forstnerivc2023domains,
	Author = {Forstneri{\v{c}}, Franc},
	Date-Added = {2025-07-22 06:43:26 +0000},
	Date-Modified = {2025-07-22 06:43:26 +0000},
	Journal = {Bulletin of the London Mathematical Society},
	Number = {6},
	Pages = {2778--2792},
	Publisher = {Wiley Online Library},
	Title = {Domains without parabolic minimal submanifolds and weakly hyperbolic domains},
	Volume = {55},
	Year = {2023}}

@article{ikonen2024liouville,
	Author = {Ikonen, Toni and Pankka, Pekka},
	Date-Added = {2025-07-22 06:39:47 +0000},
	Date-Modified = {2025-07-22 06:39:47 +0000},
	Journal = {arXiv preprint arXiv:2410.02722},
	Title = {Liouville's theorem in calibrated geometries},
	Year = {2024}}

@article{hitruhin2025quasiconformal,
	Author = {Hitruhin, Lauri and Tsantaris, Athanasios},
	Date-Added = {2025-07-22 06:33:58 +0000},
	Date-Modified = {2025-07-22 06:33:58 +0000},
	Journal = {Analysis and Geometry in Metric Spaces},
	Number = {1},
	Pages = {20250019},
	Publisher = {De Gruyter},
	Title = {Quasiconformal curves and quasiconformal maps in metric spaces},
	Volume = {13},
	Year = {2025}}

@article{onninen2021quasiregular,
	Author = {Onninen, Jani and Pankka, Pekka},
	Date-Added = {2025-07-22 06:33:31 +0000},
	Date-Modified = {2025-07-22 06:34:25 +0000},
	Journal = {Complex Analysis and its Synergies},
	Number = {3},
	Pages = {26},
	Publisher = {Springer},
	Title = {Quasiregular curves: {H}{\"o}lder continuity and higher integrability},
	Volume = {7},
	Year = {2021}}

@article{heikkila2023quasiregular,
	Author = {Heikkil{\"a}, Susanna and Pankka, Pekka and Prywes, Eden},
	Date-Added = {2025-07-22 06:31:56 +0000},
	Date-Modified = {2025-07-22 06:31:56 +0000},
	Journal = {The Journal of Geometric Analysis},
	Number = {1},
	Pages = {1},
	Publisher = {Springer},
	Title = {Quasiregular curves of small distortion in product manifolds},
	Volume = {33},
	Year = {2023}}

@article{heikkila2024quasiregularcohom,
	Author = {Heikkil{\"a}, Susanna},
	Date-Added = {2025-07-22 06:31:47 +0000},
	Date-Modified = {2025-07-22 06:32:18 +0000},
	Journal = {Proceedings of the London Mathematical Society},
	Number = {5},
	Pages = {e12602},
	Publisher = {Wiley Online Library},
	Title = {Quasiregular curves and cohomology},
	Volume = {128},
	Year = {2024}}

@article{ikonen2024quasiregular,
	Author = {Ikonen, Toni},
	Date-Added = {2025-07-22 06:29:13 +0000},
	Date-Modified = {2025-07-22 06:29:13 +0000},
	Journal = {arXiv preprint arXiv:2407.02334},
	Title = {Quasiregular curves: Removability of singularities},
	Year = {2024}}

@article{pankka2020quasiregular,
	Author = {Pankka, Pekka},
	Date-Added = {2025-07-22 06:28:51 +0000},
	Date-Modified = {2025-07-22 06:28:51 +0000},
	Journal = {Annales Fennici Mathematici},
	Number = {2},
	Pages = {975--990},
	Title = {Quasiregular curves},
	Volume = {45},
	Year = {2020}}

@article{smith2011theory,
	Author = {Smith, Aaron M},
	Date-Added = {2025-07-22 06:27:35 +0000},
	Date-Modified = {2025-07-22 06:27:35 +0000},
	Journal = {arXiv preprint arXiv:1112.1471},
	Title = {A theory of multiholomorphic maps},
	Year = {2011}}

@article{harvey2009introduction,
	Author = {Harvey, F Reese and Lawson Jr, H Blaine},
	Date-Added = {2025-07-22 06:22:30 +0000},
	Date-Modified = {2025-07-22 06:22:30 +0000},
	Journal = {American journal of mathematics},
	Number = {4},
	Pages = {893--944},
	Publisher = {Johns Hopkins University Press},
	Title = {An introduction to potential theory in calibrated geometry},
	Volume = {131},
	Year = {2009}}

@article{gaussier2024kobayashi,
	Author = {Gaussier, Herv{\'e} and Sukhov, Alexandre},
	Date-Added = {2025-07-22 06:19:23 +0000},
	Date-Modified = {2025-07-22 06:19:33 +0000},
	Journal = {arXiv preprint arXiv:2407.15976},
	Title = {Kobayashi hyperbolicity in {R}iemannian manifolds},
	Year = {2024}}

@article{gaussier2025kobayashi,
	Author = {Gaussier, Herv{\'e} and Sukhov, Alexandre},
	Date-Added = {2025-07-22 06:16:44 +0000},
	Date-Modified = {2025-07-22 06:17:03 +0000},
	Journal = {Proceedings of the American Mathematical Society},
	Number = {05},
	Pages = {1993--2006},
	Title = {On the {K}obayashi metrics in {R}iemannian manifolds},
	Volume = {153},
	Year = {2025}}

@book{kobayashi2013hyperbolic,
	Author = {Kobayashi, Shoshichi},
	Date-Added = {2025-07-22 06:11:23 +0000},
	Date-Modified = {2025-07-22 06:11:23 +0000},
	Publisher = {Springer Science \& Business Media},
	Title = {Hyperbolic complex spaces},
	Volume = {318},
	Year = {2013}}

@inproceedings{royden2006remarks,
	Author = {Royden, Halsey L.},
	Booktitle = {Several Complex Variables II Maryland 1970: Proceedings of the International Mathematical Conference, held at College Park, April 6--17, 1970},
	Date-Added = {2025-07-22 06:10:54 +0000},
	Date-Modified = {2025-07-22 06:11:43 +0000},
	Organization = {Springer},
	Pages = {125--137},
	Title = {Remarks on the {K}obayashi metric},
	Year = {2006}}

@article{bonk2001quasiregular,
	Author = {Bonk, Mario and Heinonen, Juha},
	Date-Added = {2025-07-22 06:03:00 +0000},
	Date-Modified = {2025-07-22 06:03:35 +0000},
	Journal = {Acta Math},
	Pages = {219--238},
	Title = {Quasiregular mappings and Cohomology},
	Volume = {186},
	Year = {2001}}

@article{forstnerivc2024schwarz,
	Author = {Forstneri{\v{c}}, Franc and Kalaj, David},
	Date-Added = {2025-07-22 06:02:23 +0000},
	Date-Modified = {2025-07-22 06:02:23 +0000},
	Journal = {Analysis \& PDE},
	Number = {3},
	Pages = {981--1003},
	Publisher = {Mathematical Sciences Publishers},
	Title = {Schwarz--Pick lemma for harmonic maps which are conformal at a point},
	Volume = {17},
	Year = {2024}}

@article{drnovsek2021hyperbolic,
	Author = {Drinovec Drnovsek, Barbara and Forstneric, Franc},
	Date-Added = {2025-07-22 06:02:11 +0000},
	Date-Modified = {2025-07-22 06:42:37 +0000},
	Journal = {arXiv preprint arXiv:2109.06943},
	Title = {Hyperbolic domains in real {E}uclidean spaces},
	Year = {2021}}

@article{cheng2021bubble,
	Author = {Cheng, Da Rong and Karigiannis, Spiro and Madnick, Jesse},
	Date-Added = {2025-07-22 05:59:12 +0000},
	Date-Modified = {2025-07-22 05:59:12 +0000},
	Journal = {Asian Journal of Mathematics},
	Number = {6},
	Pages = {903--984},
	Publisher = {International Press of Boston},
	Title = {Bubble tree convergence of conformally cross product preserving maps},
	Volume = {24},
	Year = {2021}}

@article{iliashenko2023special,
	Author = {Iliashenko, Anton and Karigiannis, Spiro},
	Date-Added = {2025-07-22 05:58:40 +0000},
	Date-Modified = {2025-07-22 06:26:48 +0000},
	Journal = {To appear in Mathematical Research Letters, arXiv:2311.14074},
	Title = {A special class of $k$-harmonic maps inducing calibrated fibrations},
	Year = {2023}}

@article{cheng2023variational,
	Author = {Cheng, Da Rong and Karigiannis, Spiro and Madnick, Jesse},
	Date-Added = {2025-07-22 05:58:31 +0000},
	Date-Modified = {2025-07-22 05:58:31 +0000},
	Journal = {Calculus of Variations and Partial Differential Equations},
	Number = {6},
	Pages = {174},
	Publisher = {Springer},
	Title = {A variational characterization of calibrated submanifolds},
	Volume = {62},
	Year = {2023}}

@article{harvey1982calibrated,
	Author = {Harvey, Reese and Lawson, H Blaine},
	Date-Added = {2021-05-19 11:04:07 +0000},
	Date-Modified = {2025-07-22 06:21:39 +0000},
	Journal = {Acta Mathematica},
	Number = {1},
	Pages = {47--157},
	Publisher = {Springer},
	Title = {Calibrated Geometries},
	Volume = {148},
	Year = {1982}}

\Addresses

\end{document}